
\documentclass[preprint,12pt]{elsarticle}

\biboptions{sort&compress}


\usepackage{graphicx}

\usepackage{amssymb}
\usepackage{amsthm,amsmath}


\theoremstyle{plain}
\newproof{prf}{Proof}

\usepackage{enumerate}
\usepackage{cuted}

\newtheorem{theorem}{Theorem}[section]
\newtheorem{lemma}[theorem]{Lemma}
\newtheorem{definition}[theorem]{Definition}
\newtheorem{remark}[theorem]{Remark}
\newtheorem{proposition}[theorem]{Proposition}
\newtheorem{cor}[theorem]{Corollary}

\newtheorem{example}[theorem]{Example}


\usepackage{verbatim}
\usepackage{color}

\begin{document}

\begin{frontmatter}



\title
{Soft Ditopological Spaces}


\author[address*]{Tugbahan Simsekler Dizman\footnote{Corresponding author\\
E-mail adresses: tsimsekler@hotmail.com (T. Dizman), sostaks@latnet.lv (A. \v{S}ostak), syuksel@selcuk.edu.tr (S. Yuksel) }}
\author[address**]{Alexander \v{S}ostak}
\author[address***]{Saziye Yuksel} 
\address[address*]{Kafkas University, Science Faculty, Department of Mathemetics, Kars, Turkey}
\address[address**]{University of Latvia, Institute of Mathematics and Computer Sciences, 
Department of Mathemetics, Riga, Latvia}
\address[address***]{Selcuk University, Science Faculty, Department of Mathemetics, Konya, Turkey}

\begin{abstract}
We introduce the concept of a soft ditopological space as  the  "soft generalization" of the concept of a ditopological space 
as it is defined in the papers by L.M. Brown and co-authors, see e.g.  L.\,M. Brown, R. Ert{\"u}rk, {\c S}. Dost, 
Ditopological texture spaces and fuzzy topology, I. Basic Concepts,
Fuzzy Sets and Systems {\bf 147}\,(2) (2004), 171--199.
Actually a soft ditopological space is a soft set 
with two independent structures on it -
a soft topology and a soft co-topology. The first one is used to  describe  openness-type properties of a space while the second one deals with
its closedness-type properties. 
We study basic properties of such spaces and accordingly defined continuous mappings
between such spaces.
\end{abstract}

\begin{keyword}
soft set, ditopology, cotopology, soft remote neighborhood, separation axioms

\end{keyword}

\end{frontmatter}

\section{Introduction}

The concept of a soft set introduced in 1999 by D Molodtsov \cite{Mol} 
 gave rise to a large amount of publications, exploiting soft sets both from theoretical point of view and in the prospectives of their 
 applications. Actually in modern times it happens very often when a new mathematical concept, especially if it is assumed 
to have practical applications, arises interest of many researchers. Especially this concerns young people since it allows 
to enter the real scientific life in a relatively short way. In particular this happened with the soft sets. Among different areas of 
theoretical mathematics where soft sets 
are exploited probably the largest amount of papers are related to general topology. Soft topological and fuzzy soft topological
spaces and their properties were studied in \cite{AH, AAy, CKE, HA, SN, PAy, ZAM, R, TG, Min}. 
An alternative approach to the concept of topology in the framework of soft sets was developed in
\cite{PSAy, PSAy1}.
 Since the subject of this work is also related to soft topology, 
we feel it is important to explain more clearly our position in this field.

First we  conclude, that for  applications of soft sets in topological setting it is more natural to work in the 
framework of ditopologies, than in the framework of topologies. The concept of a ditopology was introduced by L.M. Brown and studied in a series 
of papers by
L.M. Brown and co-authors, see e.g. \cite{Brown1, Brown2, Brown3, Brown4} \. 
 Ditopologies are related to the concept of a bitopology introduced by J.L. Kelly \cite{Kelly}. However, as different from bitopologies, 
in ditopologies two conceptional different structures on a set are exploited: one for  description of properties related to openness of sets,
while the other describes the properties related to closedness of sets. These structures need not have any interrelations between them, 
although in the trivial case they can collapse into a usual topology. The idea of a ditopology seems especially suitable 
for the soft variation of topology since it allows to avoid the operation of complementation which is often "inconvenient" 
 in the framework of soft set theory.

The second distinction of our work to compare with most publications on soft topology is the interpretation of the sets 
$E$ and $A$ in the definition of a soft set, see Definition 2.1.  We realize the set $E$ as the set of potential parameters while the set $A$ 
is interpreted as the set of actual parameters. 
In the papers written on soft topology which are known to us usually the authors either assume that the image of a parameter 
not belonging to $A$ is zero (or an empty set), or that sets $E$ and $A$ coincide. On the other hand we assume that in the parameters 
not belonging to $A$,
the values of the soft set are not defined. It makes an essential difference in interpretation and in the methods of research both in case 
of soft sets and soft topology, 
and especially in case of fuzzy soft sets and fuzzy soft topology. 

The structure of our paper is as follows.
In the second section, Preliminaries, we recall some definitions that are used throughout the paper. In the third section we develop the theory of 
soft topology based on  open soft sets. In this part of our work the concepts and results have much in common with the concepts and  
results which can be found in papers written by other authors, see e.g. \cite{ AH, AAy,CKE,HA,SN,PAy,ZAM}  
and therefore in most cases the  proofs are omitted. However, also here we always follow the idea that we cannot use complementation 
as a tool to get the property of closedness as well as the 
assumption that
the sets of potential and actual parameters  may be different and that the image of a parameter contained in $E\setminus A$ is not defined. 
In the fourth section we develop soft topology on the basis of closed sets, excluding opportunity to operate with open sets at the same time.
The theory which is being developed in this section can be called {\it soft cotopology}. 
Finally in Section 5 the synthesis  of concepts and results from the previous two sections is done. Here we consider the case when 
two independent soft structures on a given set are defined   -- one of them is realizing the property of openness, 
and the other is interpreting the property of closedness. This leads us to  the concept of a soft 
ditopological space. Some properties of such spaces are 
described.
In the last section  we sum up basic results of this work and discuss some prospectives for the future work. 

\section{Preliminaries}
Here we recall the basic concepts and results on soft sets. Most of them can be found in \cite{FLLJ, HuA, Mol, AFL, FDA, MBR, KA}. However, as
it was emphasized  in the Introduction, our definition of a soft set distinguishes from the definition of a
soft set in other works mentioned above, in the way how we interpret the set $E$ of potential parameters  and its subset $A$ of actual 
parameters. 
\begin{definition}
Let $U$  be a universe, $E$ be a set of parameters and $A\subseteq E$ A mapping $F_{A}:A\rightarrow 2^{U}$ is called  a soft set.
That is $F_{A}(e)\subseteq U$ if $e\in A$ and the value  $F_{A}(e)$ will not be defined for $e \in E \setminus A$.
\end{definition}
\begin{definition}
The complement of $F_{A}$ is a soft set  $F_{A}^{c}:A\rightarrow 2^{U}$ defined by 
$F_{A}^{c}(e)=U \setminus F_{A}(e)$ for every $e\in A.$
\end{definition}
\begin{definition}
The intersection $G_{C} = \tilde\bigcap_{i\in I}F_{i_{A_{i}}}$ of  a family of soft sets $\{F_{i_{A_{i}}} \mid i \in I\}$ where $A_i \subset E$ and 
$F_{i_{A_{i}}}: A_i \to 2^U$ 
is a soft set $G_{C}: C\rightarrow 2^{U}$ where
 $C=\bigcap_{i\in I}A_i$ and  $G_{C}(e)=\bigcap_{i\in I}F_{i_{A_{i}}}(e)$ for $e\in C$.
\end{definition}

\begin{definition} Let $\{F_{i_{A_{i}}} \mid i \in I\}$ be family of soft sets where $A_i \subset E$ and 
$F_{i_{A_{i}}}: A_i \to 2^U$. For every $e \in E$ let $I_e = \{ i \in I \mid e \in A_i\}$. Then 
the union $\tilde\bigcup_{i\in I}F_{i_{A_{i}}}$ of  the family of soft sets $\{F_{i_{A_{i}}} \mid i \in I\}$ is defined 
as the soft set $G_{C}: C\rightarrow 2^{U}$ such that
 $C=\bigcup_{i\in I}A_i$ and  $G_{C}(e)=\bigcup_{i\in I_e}F_{i_{A_{i}}}(e)$ for $e\in C$.
\end{definition}
\begin{definition}
A soft set $F_{A}$ is called  a soft subset of $G_{B}$ denoted by $F_{A}\tilde{\subseteq} G_{B}$ if $A\subseteq B$ 
and $F_{A}(e)\subseteq G_{B}(e)$ for every $e\in A$ .
\end{definition}
\begin{definition}
A soft set $F_{E}$ is called the whole soft set if $F_{E}(e)=U$ for every $e\in E$; we denote it by  
 $\tilde{U_{E}}.$ A soft set $F_{A}$ is called the whole soft set relative to A if $F_{A}(e)=U$ for every $e\in A$;
we denote it  by $\tilde{U_{A}}.$
\end{definition}
\begin{definition}
A soft set $F_{E}$ is called  the null soft set if $F_{E}(e)=\emptyset $ for every $e\in E$; we denote it by $\phi.$ 
A soft set $F_{A}$ is called the null soft set relative to A if $F_{A}(e)=\emptyset $ for every $e\in A$; we denote it by $\phi_{A}.$
\end{definition}

The proof of the next five statement is easy and can be done as the proof of the analogous statement in e.g. \cite{ZAM}:
\begin{theorem}
Given a family of soft sets $F_{i_{A_{i}}}: A_i \rightarrow 2^U$ the following De Morgan-type relations hold:
\begin{enumerate}
\item 
$(\tilde{\bigcap}_{i\in I}F_{i_{A_{i}}})^{c}\tilde{\subseteq} \tilde{\bigcup}_{i\in I}(F_{i_{A_{i}}}^{c}).$
\item 
$(\tilde{\bigcup}_{i\in I}F_{i_{A_{i}}})^{c}\tilde{\supseteq}\tilde{\bigcap}_{i\in I}(F_{i_{A_{i}}}^{c}).$
\end{enumerate}
\end{theorem}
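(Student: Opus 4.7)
The plan is to verify each inclusion by unpacking both sides according to Definitions~2.2--2.4 and then applying the classical set-theoretic De~Morgan identities pointwise on the smaller of the two parameter domains. The subtlety that distinguishes the soft setting from the classical one is that here forming the complement of an intersection shrinks the parameter domain, whereas the union of complements does not; so equality must be weakened to containment.

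First I would fix notation. Set $C=\bigcap_{i\in I}A_i$ and $D=\bigcup_{i\in I}A_i$, and for each $e\in E$ let $I_e=\{i\in I \mid e\in A_i\}$. Unwinding the definitions, $\tilde{\bigcap}_i F_{i_{A_i}}$ has domain $C$ and $\tilde{\bigcup}_i F_{i_{A_i}}$ has domain $D$. Since Definition~2.2 preserves the domain, $(\tilde{\bigcap}_i F_{i_{A_i}})^c$ and $\tilde{\bigcap}_i(F_{i_{A_i}}^c)$ both live on $C$, while $(\tilde{\bigcup}_i F_{i_{A_i}})^c$ and $\tilde{\bigcup}_i(F_{i_{A_i}}^c)$ both live on $D$.

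For inclusion~(1), the left-hand side has domain $C$ and the right-hand side has domain $D$, so the condition $C\subseteq D$ required by Definition~2.5 is immediate. For the pointwise check I take $e\in C$; then $I_e=I$, and the ordinary De~Morgan identity applied inside $U$ gives
\[
\bigl(\tilde{\bigcap}_i F_{i_{A_i}}\bigr)^c(e) \;=\; U\setminus\bigcap_{i\in I}F_{i_{A_i}}(e) \;=\; \bigcup_{i\in I}\bigl(U\setminus F_{i_{A_i}}(e)\bigr) \;=\; \bigl(\tilde{\bigcup}_i F_{i_{A_i}}^c\bigr)(e),
\]
so in fact the two soft sets coincide on $C$, which is stronger than the required containment. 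Inclusion~(2) is handled symmetrically: the right-hand side $\tilde{\bigcap}_i(F_{i_{A_i}}^c)$ has the smaller domain $C$, and for $e\in C$ the analogous computation
\[
\bigl(\tilde{\bigcup}_i F_{i_{A_i}}\bigr)^c(e) \;=\; \bigcap_{i\in I}\bigl(U\setminus F_{i_{A_i}}(e)\bigr) \;=\; \bigl(\tilde{\bigcap}_i F_{i_{A_i}}^c\bigr)(e)
\]
again yields pointwise equality on $C$, which combined with $C\subseteq D$ gives the claimed $\tilde{\supseteq}$.

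The only point that requires care, and the reason the statements are inclusions rather than equalities, is the mismatch between $C$ and $D$: at a parameter $e\in D\setminus C$ the complement-of-intersection side is simply undefined (by the convention of the Introduction that treats $A$ as the set of \emph{actual} parameters), while the union-of-complements side may carry a nontrivial value there. Tracking this domain bookkeeping is the whole content of the argument; the set-theoretic De~Morgan step inside $U$ is routine.
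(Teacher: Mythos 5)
Your proof is correct: the domain bookkeeping ($C=\bigcap_i A_i$ versus $D=\bigcup_i A_i$, with $I_e=I$ for $e\in C$) together with the pointwise classical De~Morgan identity is exactly what the statement requires under Definitions~2.2--2.5, and you correctly identify the domain mismatch as the reason equality degrades to soft containment. The paper itself omits the proof (deferring to the analogous argument in the cited reference), and your argument is precisely the standard definitional unwinding that such a deferral presupposes, so nothing further is needed.
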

\begin{proposition}
Let $F_{A}\tilde{\subseteq} \tilde{U_{E}}$. Then the following hold:
\begin{enumerate}
\item $\phi_{E}\tilde{\cap} F_{A}= \phi_{A},$ \ $\phi_{E}\tilde{\cup} F_{A}= F_{A}.$
\item $\tilde{U_{E}}\tilde{\cap} F_{A}= F_{A},$ \ $\tilde{U_{E}}\tilde{\cup} F_{A}= \tilde{U_{A}}.$
\end{enumerate}
\end{proposition}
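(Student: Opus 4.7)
The plan is to verify each equality by directly unfolding Definition 3 (intersection) and Definition 4 (union), comparing the parameter domain of the resulting soft set with the expected right-hand side, and then checking the assigned values pointwise against Definitions 6 and 7. Throughout I shall use the hypothesis $A \subseteq E$, so that $E \cap A = A$ and $E \cup A = E$, which immediately fixes the domain of each intersection and union.

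For item (1), unpacking $\phi_E \tilde\cap F_A$ gives a soft set on $E \cap A = A$ whose value at each $e \in A$ is $\emptyset \cap F_A(e) = \emptyset$, which is exactly $\phi_A$. For $\phi_E \tilde\cup F_A$ the domain is $E \cup A = E$; at $e \in A$ the value is $\emptyset \cup F_A(e) = F_A(e)$, while at $e \in E \setminus A$ only $\phi_E$ contributes the value $\emptyset$. Invoking the convention highlighted in the Introduction that the value of a soft set on $E \setminus A$ is undefined, and hence immaterial, the result is identified with $F_A$.

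For item (2), the intersection $\tilde U_E \tilde\cap F_A$ has domain $A$ and value $U \cap F_A(e) = F_A(e)$ at every $e \in A$, so it equals $F_A$ outright. The union $\tilde U_E \tilde\cup F_A$ has domain $E$ and takes value $U$ at every $e \in E$, since $U \cup F_A(e) = U$ for $e \in A$ and $\tilde U_E(e) = U$ for $e \in E \setminus A$.

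The main obstacle is reconciling this last computation with the stated right-hand side $\tilde U_A$: strictly following Definition 4, the union's parameter domain is $E$ rather than $A$, so the literal conclusion is $\tilde U_E$, not $\tilde U_A$. To match the stated form one must either treat this as a mild typographic infelicity and read $\tilde U_E$ on the right, or lean on the convention that $\tilde U_E$ and $\tilde U_A$ are to be identified whenever they agree on the actual parameters in $A$ (the same tacit identification that already justifies the second equality of item (1)). Once this interpretive choice is fixed, the proof collapses to the routine case analyses sketched above.
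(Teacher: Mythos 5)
Your proposal is correct and follows exactly the route the paper intends: the paper gives no proof of this proposition, deferring to the analogous routine verification in the cited work of Zorlutuna et al., and that verification is precisely the definition-unfolding you carry out. Your observation about the parameter domains is well taken --- under Definition 2.4 the unions $\phi_{E}\tilde{\cup}F_{A}$ and $\tilde{U_{E}}\tilde{\cup}F_{A}$ literally carry the domain $E$ rather than $A$, so the stated right-hand sides $F_{A}$ and $\tilde{U_{A}}$ only hold up to the identification (or correction to $\tilde{U_{E}}$) you describe, a gap in the statement itself rather than in your argument.
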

\begin{proposition}
Let $F_{A}, G_{B}\tilde{\subseteq} \tilde{U_{E}}$. Then the following hold:
\begin{enumerate}
\item $F_{A}\tilde{\subseteq} G_{B}$ iff $F_{A}\tilde{\cap}G_{B}=F_{A}.$
\item $F_{A}\tilde{\subseteq} G_{B}$ iff $F_{A}\tilde{\cup}G_{B}=G_{B}.$
\end{enumerate}
\end{proposition}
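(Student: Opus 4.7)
The plan is to prove both biconditionals by directly unwinding Definitions 2.3--2.5. The key observation is that in this paper the underlying parameter domains of $F_A \tilde\cap G_B$ and $F_A \tilde\cup G_B$ are $A \cap B$ and $A \cup B$ respectively, so the desired equalities $F_A \tilde\cap G_B = F_A$ and $F_A \tilde\cup G_B = G_B$ implicitly force the expected containments between the parameter sets $A$ and $B$. With this in mind, each biconditional reduces to a pair of straightforward checks.

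For (1), I would first verify the forward direction: assume $F_A \tilde\subseteq G_B$, so $A \subseteq B$ and $F_A(e) \subseteq G_B(e)$ for every $e \in A$. Then the intersection $F_A \tilde\cap G_B$ has parameter domain $A \cap B = A$, and for each $e \in A$, its value is $F_A(e) \cap G_B(e) = F_A(e)$. Hence $F_A \tilde\cap G_B = F_A$. Conversely, if $F_A \tilde\cap G_B = F_A$, then equating parameter domains gives $A \cap B = A$, i.e., $A \subseteq B$; and equating values for each $e \in A$ gives $F_A(e) \cap G_B(e) = F_A(e)$, i.e., $F_A(e) \subseteq G_B(e)$. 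This is precisely the definition of $F_A \tilde\subseteq G_B$.

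For (2), the forward direction again splits into a check at the level of parameter domains and a check at the level of values. Assuming $F_A \tilde\subseteq G_B$, we have $A \subseteq B$, so the union $F_A \tilde\cup G_B$ has parameter domain $A \cup B = B$. Now case-split on $e \in B$: if $e \in A$, then by Definition 2.4 the value at $e$ is $F_A(e) \cup G_B(e)$, which equals $G_B(e)$ by the assumption; if $e \in B \setminus A$, then $I_e = \{G_B\}$ and the value at $e$ is just $G_B(e)$. In both cases the union agrees with $G_B$. For the converse, $F_A \tilde\cup G_B = G_B$ forces $A \cup B = B$, hence $A \subseteq B$; and for $e \in A \subseteq B$ the equality of values yields $F_A(e) \cup G_B(e) = G_B(e)$, i.e., $F_A(e) \subseteq G_B(e)$, which is what we need.

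I do not expect any real obstacle here; the statement is a soft-set analogue of a classical fact and the proof is pure definition-chasing. The only point requiring a bit of care is keeping track of the parameter domains (as opposed to merely the pointwise values) when equating soft sets, and in the union case, handling separately the parameters that lie in $A$ and those that lie only in $B$.
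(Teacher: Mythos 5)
Your proof is correct, and it is exactly the routine definition-chasing the paper has in mind: the paper omits the proof of this proposition entirely, deferring to the analogous statements in the cited literature. Your care with the parameter domains ($A\cap B=A$ and $A\cup B=B$ as part of the soft-set equalities) and the case split $e\in A$ versus $e\in B\setminus A$ in the union are precisely the points that matter under this paper's definitions, so nothing is missing.
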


\begin{proposition}
Let $F_{A}, G_{B}, H_{C}, S_{D}\tilde{\subseteq} \tilde{U_{E}}$. Then the following hold:
\begin{enumerate}
\item If $A\subseteq B$ and $F_{A}\tilde{\cap}G_{B}= \phi_{A\cap B}$ then $F_{A}\tilde{\subseteq} G_{B}^{c}.$\\
If $A= B$  and $F_{A}\tilde{\cap}G_{A}= \phi_{A}$ iff $F_{A}\tilde{\subseteq} G_{A}^{c}.$

\item $F_{A} \tilde{\cup} F_{A}^{c}= \tilde{U_{A}},$ $F_{A} \tilde{\cap} F_{A}^{c}= \phi_{A}.$
\item $F_{A}\tilde{\subseteq} G_{B}$ iff $G_{B}^{c}\tilde{\subseteq} F_{A}^{c}.$ 
\item If $F_{A}\tilde{\subseteq} G_{B}$ and $G_{B}\tilde{\subseteq} H_{C}$ then $F_{A}\tilde{\subseteq} H_{C}.$
\item If $F_{A}\tilde{\subseteq} G_{B}$ and $H_{C}\tilde{\subseteq} S_{D}$ then $F_{A}\tilde{\cap} H_{C}\tilde{\subseteq} G_{B}\tilde{\cap}S_{D}.$
\item If $F_{A}\tilde{\subseteq} G_{B}^{c}$ then $F_{A}\tilde{\cap}G_{B}= \phi_{A}.$
\end{enumerate}
\end{proposition}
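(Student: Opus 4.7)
The plan is to prove each of the six items by reducing the soft-set relations to their pointwise counterparts on $U$, since every operation occurring in the statement ($\tilde\cap$, $\tilde\cup$, complement, $\tilde\subseteq$) is defined parameter-wise in the preceding definitions. Throughout, the only genuine issue requiring care is bookkeeping of the parameter sets, since Definition 2.2 makes the complement $F_A^c$ live on the same domain $A$ as $F_A$ and not on the whole parameter set $E$.

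For item (1), I would start from $A\subseteq B$ and $F_A\tilde\cap G_B=\phi_{A\cap B}$. By Definition 2.3 the intersection has domain $A\cap B=A$, and the equality with $\phi_A$ forces $F_A(e)\cap G_B(e)=\emptyset$ for every $e\in A$. Thus $F_A(e)\subseteq U\setminus G_B(e)=G_B^c(e)$, which together with $A\subseteq B$ gives $F_A\tilde{\subseteq}G_B^c$. Under the stronger hypothesis $A=B$, the converse direction is symmetric: $F_A(e)\subseteq U\setminus G_A(e)$ immediately yields $F_A(e)\cap G_A(e)=\emptyset$. Item (2) is merely the pointwise identities $X\cup(U\setminus X)=U$ and $X\cap(U\setminus X)=\emptyset$ applied at every $e\in A$. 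Item (6) is essentially the forward direction of (1): from $F_A\tilde{\subseteq}G_B^c$ we read off $A\subseteq B$ and $F_A(e)\subseteq U\setminus G_B(e)$, hence $F_A(e)\cap G_B(e)=\emptyset$ on $A\cap B=A$, giving $F_A\tilde\cap G_B=\phi_A$.

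Item (3), read as an equivalence between inclusions of soft sets whose parameter sets match, uses the elementary fact $F_A(e)\subseteq G_B(e)$ iff $U\setminus G_B(e)\subseteq U\setminus F_A(e)$, i.e.\ $G_B^c(e)\subseteq F_A^c(e)$. Item (4) is transitivity: combine the chain $A\subseteq B\subseteq C$ with the pointwise chain $F_A(e)\subseteq G_B(e)\subseteq H_C(e)$ on $e\in A$. Item (5) follows from $A\cap C\subseteq B\cap D$ together with the pointwise monotonicity of intersection, so that $F_A(e)\cap H_C(e)\subseteq G_B(e)\cap S_D(e)$ for each $e\in A\cap C$.

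The main obstacle, as signaled above, is not any deep mathematical content but the need to track parameter sets consistently, since the complement operation does not widen the domain. In particular, the ``iff'' in (1) must be stated under $A=B$ because a one-sided inclusion $A\subseteq B$ does not allow one to reconstruct $F_A\tilde\cap G_B$ from $F_A\tilde{\subseteq}G_B^c$, and the inclusion reversal in (3) only makes formal sense when the two parameter sets coincide (otherwise the two complements live on incomparable domains). Once these conventions are fixed at the outset, each assertion collapses to a one-line pointwise verification of the type carried out above.
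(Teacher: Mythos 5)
Your proof is correct, and it follows exactly the routine pointwise verification that the paper itself omits (the authors merely remark that the proof is easy and refer to the analogous statements in the cited literature). Your explicit bookkeeping of the parameter domains --- in particular the observation that the ``iff'' in (1) and the equivalence in (3) only make sense when the parameter sets coincide, since complementation does not enlarge the domain --- is a worthwhile clarification of a point the statement leaves implicit.
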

\begin{definition} \label{SoftF}
 Let $U, V$ be universe sets, $E, P$ be parameter sets and let $S(U,E), S(V,P)$ be families of  all soft sets defined on $(U,E)$ and
$(V,P)$ respectively. Following e.g. \cite{KA}
we define a soft function $f=(\varphi, \psi): S(U, E)\rightarrow S(V,P)$ induced by mappings $\varphi:U\rightarrow V, \psi: E \rightarrow P$ 
by setting
\begin{eqnarray*}
f(F_{A})(p)=\varphi(\cup_{e\in \psi^{-1}(p)} F(e)), \mbox{ } \forall p\in \psi(A)
\end{eqnarray*}
for each  $F_A \in S(U,V)$. The preimage of a soft set $G_B \in S(V,P)$ under a soft function $f: S(U,E) \to S(V,P)$  is defined by
\begin{eqnarray*}
f^{-1}(G_{B})(e)= \varphi^{-1}(G_{B}(\psi(e))), \mbox{ } \forall e\in \psi^{-1}(B). 
\end{eqnarray*}
A soft mapping $f=(\varphi, \psi)$ is called injective if both $\varphi$ and $\psi$ are injective. A soft mapping $f=(\varphi, \psi)$ is 
called surjective if both $\varphi$ and $\psi$ are surjective.
\end{definition}
The proof of the next three theorems is straightforward and can be found example in \cite{KA}
\begin{theorem}
Let $f=(\varphi, \psi): S(U,E)\rightarrow S(V,P)$ be a soft function, $F_{A}, G_{B} \tilde{\subseteq} \tilde{U_{E}}$ 
and $F_{i_{A_{i}}}$ be a family of soft sets on $(U,E)$. Then,
\begin{enumerate}
\item $f(\phi_{A})=\phi_{\psi_{A}}, f(\tilde{U_{E}})\tilde{\subseteq} \tilde{V_{P}}.$ 
\item 
$f(\tilde{\bigcup}_{i\in I}F_{i_{A_{i}}})=\tilde{ \bigcup}_{i\in I}f(F_{i_{A_{i}}}).$
\item 
$f(\tilde{\bigcap}_{i\in I}F_{i_{A_{i}}})\tilde{\subseteq}\tilde{ \bigcap}_{i\in I}f(F_{i_{A_{i}}}).$
\item If $F_{A}\tilde{\subseteq} G_{B}$ then $f(F_{A})\tilde{\subseteq} f(G_{B}).$
\end{enumerate}
\end{theorem}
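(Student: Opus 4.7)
The plan is to verify each of the four parts by unwinding Definition~\ref{SoftF} pointwise in the codomain parameter $p$, and to be careful in each case about the domain of definition of the resulting soft set, since under the convention adopted in this paper a soft set is only defined on its parameter set. In every part the proof reduces to a classical image/preimage identity applied to the underlying crisp sets, combined with a bookkeeping of which parameters $e\in E$ actually contribute to $\psi^{-1}(p)$.

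For part (1), I would compute $f(\phi_A)(p)$ directly: for $p\in\psi(A)$ the union $\bigcup_{e\in\psi^{-1}(p)\cap A}\phi_A(e)$ is a union of empty sets, so its $\varphi$-image is $\emptyset$; the domain of $f(\phi_A)$ is $\psi(A)$, giving $f(\phi_A)=\phi_{\psi(A)}$. For $f(\tilde U_E)$ one gets $\varphi(\bigcup_{e\in\psi^{-1}(p)}U)=\varphi(U)\subseteq V$ on the parameter set $\psi(E)\subseteq P$; this produces only a $\tilde\subseteq$ relation because $\varphi$ need not be surjective and $\psi(E)$ need not equal $P$. Part (4) is immediate monotonicity: if $F_A\tilde\subseteq G_B$ then $A\subseteq B$, so $\psi(A)\subseteq\psi(B)$; and for each $p\in\psi(A)$ the union defining $f(F_A)(p)$ is contained in the union defining $f(G_B)(p)$ (both the indexing set grows from $\psi^{-1}(p)\cap A$ to $\psi^{-1}(p)\cap B$, and each fibre set grows from $F_A(e)$ to $G_B(e)$), so applying the monotone map $\varphi$ preserves the inclusion.

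The main work lies in parts (2) and (3), where the interplay between the unions over $i\in I$ and over $e\in\psi^{-1}(p)$ must be managed. For part (2), set $C=\bigcup_i A_i$, so the target parameter set is $\psi(C)=\bigcup_i\psi(A_i)$, which matches $\bigcup_i\mathrm{dom}\,f(F_{i_{A_i}})$, so the two sides of the claimed equality have the same domain. For $p$ in this domain I would swap the two unions,
\[
\varphi\!\Bigl(\bigcup_{e\in\psi^{-1}(p)\cap C}\bigcup_{i\in I_e}F_i(e)\Bigr)
=\varphi\!\Bigl(\bigcup_{i\in I}\bigcup_{e\in\psi^{-1}(p)\cap A_i}F_i(e)\Bigr)
=\bigcup_{i\in I}\varphi\!\Bigl(\bigcup_{e\in\psi^{-1}(p)\cap A_i}F_i(e)\Bigr),
\]
using that $\varphi$ preserves arbitrary unions of sets; the last expression is exactly $\bigcup_{i\in I_p^\ast}f(F_{i_{A_i}})(p)$ where $I_p^\ast=\{i:p\in\psi(A_i)\}$, which is the definition of $(\tilde\bigcup_i f(F_{i_{A_i}}))(p)$.

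Part (3) is where the main obstacle sits, and it is precisely why only an inclusion is claimed. The domain of $f(\tilde\bigcap_i F_{i_{A_i}})$ is $\psi(\bigcap_i A_i)$, while the domain of $\tilde\bigcap_i f(F_{i_{A_i}})$ is $\bigcap_i\psi(A_i)$, and in general $\psi(\bigcap_i A_i)\subseteq\bigcap_i\psi(A_i)$ with strict inclusion possible when $\psi$ is not injective; this containment of domains is the first half of $\tilde\subseteq$. For the pointwise part I would argue that for $p\in\psi(\bigcap_i A_i)$,
\[
f\!\Bigl(\tilde\bigcap_i F_{i_{A_i}}\Bigr)(p)
=\varphi\!\Bigl(\bigcup_{e\in\psi^{-1}(p)\cap\bigcap_i A_i}\bigcap_i F_i(e)\Bigr)
\subseteq\bigcap_i\varphi\!\Bigl(\bigcup_{e\in\psi^{-1}(p)\cap A_i}F_i(e)\Bigr),
\]
since for each fixed $i$ the inner union on the left is a subset of the union on the right, and intersection commutes through in one direction both under the crisp-set map $\varphi$ (image of intersection $\subseteq$ intersection of images) and under the union over $e$. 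The essential difficulty to watch is the order of the $\bigcup_e$ and $\bigcap_i$ operations combined with the possible failure of $\varphi$ to be injective; both defects are handled by the universal one-directional inclusion, which is what the statement claims.
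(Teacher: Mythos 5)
Your proof is correct. The paper omits the proof of this theorem entirely, referring the reader to Kharal and Ahmad \cite{KA}, and your direct pointwise unwinding of Definition~\ref{SoftF} is exactly the standard argument; the additional domain bookkeeping you supply (e.g.\ $f(\phi_A)$ living on $\psi(A)$, and $\psi(\bigcap_i A_i)\subseteq\bigcap_i\psi(A_i)$ giving the domain half of the inclusion in part (3)) is precisely the care required by this paper's convention that a soft set is undefined outside its set of actual parameters.
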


\begin{theorem}
Let $f=(\varphi, \psi): S(U,E)\rightarrow S(V,P)$ be a soft function, $F_{A}, G_{B} \tilde{\subseteq} \tilde{V_{P}}$ and $F_{i_{A_{i}}}$ 
be a family of soft sets on $(V,P)$. Then,
\begin{enumerate}
\item $f^{-1}(\phi_{P})=\phi_{E}, f^{-1}(\tilde{V_{P}})= \tilde{U_{E}}.$ 
\item 
$f^{-1}(\tilde{\bigcup}_{i\in I}F_{i_{A_{i}}})= \tilde{\bigcup}_{i\in I}f^{-1}(F_{i_{A_{i}}}).$
\item 
$f^{-1}(\tilde{\bigcap}_{i\in I}F_{i_{A_{i}}})= \tilde{\bigcap}_{i\in I}(f^{-1}(F_{i_{A_{i}}})).$
\end{enumerate}
\end{theorem}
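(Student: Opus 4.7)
The plan is to verify each of the three assertions by unwinding the definition of the preimage soft function
$$f^{-1}(G_B)(e) \;=\; \varphi^{-1}\!\bigl(G_B(\psi(e))\bigr), \qquad e \in \psi^{-1}(B),$$
and comparing the resulting parameter domain and pointwise value with those of the right-hand side. At the pointwise level, the classical fact that the crisp preimage $\varphi^{-1}$ commutes with arbitrary unions, arbitrary intersections, sends $\emptyset$ to $\emptyset$ and $V$ to $U$ is the only ingredient. The slight care is purely bookkeeping about which set of parameters the resulting soft set is defined on.

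For (1), I would observe that the domain of both $f^{-1}(\phi_P)$ and $f^{-1}(\tilde V_P)$ is $\psi^{-1}(P)=E$, and then compute pointwise: for every $e\in E$ one has $f^{-1}(\phi_P)(e)=\varphi^{-1}(\emptyset)=\emptyset$ and $f^{-1}(\tilde V_P)(e)=\varphi^{-1}(V)=U$, which by Definitions 2.6 and 2.7 give $\phi_E$ and $\tilde U_E$ respectively.

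For (3), I would first check that both soft sets are defined on the same parameter set, namely
$$\psi^{-1}\!\bigl(\textstyle\bigcap_{i}A_{i}\bigr)=\bigcap_{i}\psi^{-1}(A_{i}),$$
and then evaluate: for $e$ in this set, $f^{-1}\!\bigl(\tilde\bigcap_{i}F_{i_{A_{i}}}\bigr)(e)=\varphi^{-1}\!\bigl(\bigcap_{i}F_{i_{A_{i}}}(\psi(e))\bigr)=\bigcap_{i}\varphi^{-1}(F_{i_{A_{i}}}(\psi(e)))=\bigcap_{i}f^{-1}(F_{i_{A_{i}}})(e)$, where the last equality uses that each $\psi(e)\in A_{i}$, i.e.\ each $e\in\psi^{-1}(A_{i})$, so every factor is indeed defined.

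The step that needs the most attention is (2), because by Definition 2.4 the union at a parameter $p$ is only taken over the index set $I_{p}=\{i\in I\mid p\in A_{i}\}$, and the same filtering of indices must appear on both sides. I would verify that the domain $\psi^{-1}\!\bigl(\bigcup_{i}A_{i}\bigr)=\bigcup_{i}\psi^{-1}(A_{i})$ agrees, and then for $e$ in this set, set $p=\psi(e)$ and unwind:
$$f^{-1}\!\Bigl(\tilde{\textstyle\bigcup}_{i}F_{i_{A_{i}}}\Bigr)(e)=\varphi^{-1}\!\Bigl(\bigcup_{i\in I_{p}}F_{i_{A_{i}}}(p)\Bigr)=\bigcup_{i\in I_{p}}\varphi^{-1}\bigl(F_{i_{A_{i}}}(p)\bigr).$$
On the right-hand side, the union $\tilde\bigcup_{i}f^{-1}(F_{i_{A_{i}}})$ evaluated at $e$ is the union of $f^{-1}(F_{i_{A_{i}}})(e)$ over those $i$ with $e\in\psi^{-1}(A_{i})$, i.e.\ with $\psi(e)=p\in A_{i}$, which is precisely $i\in I_{p}$. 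Thus the two indexing sets coincide and the two pointwise values agree, which together with the matching domains yields the claimed equality.
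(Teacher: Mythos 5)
Your argument is correct and is exactly the ``straightforward'' verification the paper itself omits, deferring instead to the cited reference \cite{KA}: unwind $f^{-1}(G_B)(e)=\varphi^{-1}(G_B(\psi(e)))$ on domain $\psi^{-1}(B)$, match parameter domains, and use that the crisp preimage $\varphi^{-1}$ commutes with unions and intersections. Your extra care in part (2) with the index sets $I_p$ versus $\{i\mid e\in\psi^{-1}(A_i)\}$ is the one genuinely non-automatic point given Definition 2.4, and you resolve it correctly.
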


\begin{theorem}
Let $f=(\varphi, \psi): S(U,E)\rightarrow S(V,P)$ be a soft function and $F_{A} \tilde{\subseteq} \tilde{V_{P}}.$ Then,
\begin{enumerate}
\item $f(f^{-1}(F_{A}))\tilde{\subseteq}F_{A}.$
\item $f^{-1}(F_{A}^{c})=(f^{-1}(F_{A}))^{c}.$
\item $F_{A}\tilde{\subseteq} f^{-1}(f(F_{A})).$
\end{enumerate}
\end{theorem}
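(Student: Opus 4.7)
The plan is to verify each of the three items by unraveling the pointwise definitions of $f$ and $f^{-1}$ from Definition~\ref{SoftF}, combined with three elementary set-theoretic facts about $\varphi$: the identities $\varphi(\varphi^{-1}(X)) \subseteq X$, $Y \subseteq \varphi^{-1}(\varphi(Y))$, and $\varphi^{-1}(V \setminus X) = U \setminus \varphi^{-1}(X)$. The care required is with the parameter sets, because in the convention of this paper a soft set is only defined on its actual parameters and the image and preimage operations on soft sets explicitly restrict to $\psi(A)$ and $\psi^{-1}(B)$ respectively; thus every claim $\tilde\subseteq$ demands a check both at the parameter level and at the fibre level.

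For item (1), first note that the domain of $f^{-1}(F_A)$ is $\psi^{-1}(A)$, so the domain of $f(f^{-1}(F_A))$ is $\psi(\psi^{-1}(A))$, a subset of $A$; this gives the parameter-side half of $\tilde\subseteq$. For any $p$ in this domain,
\begin{equation*}
f(f^{-1}(F_A))(p) = \varphi\Bigl(\bigcup_{e \in \psi^{-1}(p)} \varphi^{-1}(F_A(\psi(e)))\Bigr),
\end{equation*}
and since $\psi(e) = p$ for every $e$ appearing in the union, this collapses to $\varphi(\varphi^{-1}(F_A(p))) \subseteq F_A(p)$. Item (2) is purely pointwise: both $f^{-1}(F_A^c)$ and $(f^{-1}(F_A))^c$ have parameter set $\psi^{-1}(A)$, and at each such $e$ one has $f^{-1}(F_A^c)(e) = \varphi^{-1}(V \setminus F_A(\psi(e))) = U \setminus \varphi^{-1}(F_A(\psi(e))) = (f^{-1}(F_A))^c(e)$, using the standard complement-preimage identity.

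Item (3), where $F_A$ must be read as a soft set on $(U,E)$ so that $f(F_A)$ is defined, proceeds symmetrically to (1). The domain of $f^{-1}(f(F_A))$ is $\psi^{-1}(\psi(A))$, which \emph{contains} $A$, giving the parameter-side half of $F_A \tilde\subseteq f^{-1}(f(F_A))$. For $e \in A$, one unravels
\begin{equation*}
f^{-1}(f(F_A))(e) = \varphi^{-1}\bigl(\varphi\bigl(\bigcup_{e' \in \psi^{-1}(\psi(e))} F_A(e')\bigr)\bigr),
\end{equation*}
notes that the inner union contains the summand $F_A(e)$, and applies $Y \subseteq \varphi^{-1}(\varphi(Y))$ to conclude $F_A(e) \subseteq f^{-1}(f(F_A))(e)$.

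The main obstacle is bookkeeping rather than deep mathematics: one must track the asymmetry $\psi(\psi^{-1}(A)) \subseteq A$ versus $\psi^{-1}(\psi(A)) \supseteq A$, since these opposite containments on the parameter side are exactly what produces the $\tilde\subseteq$ in (1) and the reversed $\tilde\subseteq$ in (3). Equality in either item would require $\psi$ to be respectively surjective onto $\psi(A)$ or injective on $\psi^{-1}(\psi(A))$, which is why only one-sided inclusions can be obtained in (1) and (3), whereas (2) yields equality because no outer image or preimage of $\psi$ enters the computation.
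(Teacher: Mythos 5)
Your proof is correct. The paper itself gives no argument for this theorem --- it is stated with the remark that the proof ``is straightforward and can be found\ldots in \cite{KA}'' --- so there is no in-paper proof to compare against; your direct pointwise verification is exactly the standard route such a proof would take. Two things you do are worth keeping: you track the parameter-set bookkeeping that the paper's convention (soft sets undefined outside their actual parameter set, images and preimages restricted to $\psi(A)$ and $\psi^{-1}(B)$) makes genuinely necessary, correctly locating the one-sided inclusions in (1) and (3) in the containments $\psi(\psi^{-1}(A))\subseteq A$ and $A\subseteq\psi^{-1}(\psi(A))$ on top of the fibrewise facts $\varphi(\varphi^{-1}(X))\subseteq X$ and $Y\subseteq\varphi^{-1}(\varphi(Y))$; and you notice that the stated hypothesis $F_{A}\tilde{\subseteq}\tilde{V_{P}}$ cannot apply to item (3), where $F_{A}$ must live on $(U,E)$ for $f(F_{A})$ to be defined --- a slip in the paper's statement that your reading silently repairs.
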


\section{Soft topological spaces defined by open soft sets}
\subsection{Soft topology}
Here we recall some concepts, results and constructions in soft topology, which can be found in \cite{Mol, AFL,AAy,KA,SN,PAy}.
However, as different from most of these works, we make a clear distinction between the set $E$ of potential parameters and 
its subset $A$ of actual parameters. Besides, here in our considerations we are allowed to use only the property of openness for soft sets
and must avoid handling of closedness property.
\begin{definition}
Let $U$ be a universe, $E$ be a set of parameters. A family $\tau$ of subsets of $\tilde{U_{E}}$  is called  a soft topology 
if the following holds:
\begin{enumerate}
\item $\phi_{A}, \tilde{U_{E}}\in \tau \mbox{ }(\forall A \subseteq E).$
\item If $\{F_{i_{A_{i}}}\tilde{\subseteq}\tilde{U_{E}} \mid i \in I\}\subseteq\tau$ then $\tilde{\bigcup}_{i\in I} F_{i_{A_{i}}}\in \tau.$
\item If $ F_{A}, G_{B} \in \tau$ then $ F_{A}\tilde{\cap}G_{B}\in \tau.$
\end{enumerate}
Every member of $\tau$ is called an open soft set and the pair $(\tilde{U_{E}},\tau)$ is called a soft topological space.

\end{definition}

\noindent Given two soft topologies $\tau_1$ and $\tau_2$ on $\tilde{U_{E}},$ a soft topology $\tau_{2}$ is called  coarser than the soft topology 
$\tau_{1}$ 
 if for any $F_{A} \in \tau_{2}$ it holds $F_{A} \in \tau_{1}.$

\smallskip

\noindent The proof of the next two theorems is straightforward and can be verified, e.g. as the proof of the similar statements in \cite{SN}
\begin{theorem}
If $(\tilde{U_{E}},\tau_{1})$ and $(\tilde{U_{E}},\tau_{2})$ are two soft topological spaces, 
then $(\tilde{U_{E}},\tau_{1}\cap \tau_{2})$ is a soft topological space.
\end{theorem}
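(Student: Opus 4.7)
The plan is to verify the three defining axioms of a soft topology directly for the family $\tau_1 \cap \tau_2$, using that each $\tau_i$ already satisfies them. This is essentially the standard argument for the fact that an intersection of topologies is a topology, lifted to the soft setting; the key observation is that each of the three closure conditions for soft topologies is ``pointwise'' in $i$, so it descends to the intersection without any interaction between $\tau_1$ and $\tau_2$.

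First I would check the containment axiom: since $\phi_A \in \tau_1$ and $\phi_A \in \tau_2$ for every $A \subseteq E$, and similarly $\tilde{U_E} \in \tau_1 \cap \tau_2$, the first axiom holds. Next, for the union axiom, I would take an arbitrary family $\{F_{i_{A_i}} \mid i \in I\} \subseteq \tau_1 \cap \tau_2$, observe that it is simultaneously a subfamily of $\tau_1$ and of $\tau_2$, and apply axiom (2) for each of $\tau_1$ and $\tau_2$ separately to conclude that $\tilde{\bigcup}_{i\in I} F_{i_{A_i}}$ lies in both, hence in their intersection. The binary intersection axiom is handled in exactly the same way: if $F_A, G_B \in \tau_1 \cap \tau_2$, then $F_A \tilde{\cap} G_B \in \tau_j$ for each $j \in \{1,2\}$ by axiom (3) of $\tau_j$, and so $F_A \tilde{\cap} G_B \in \tau_1 \cap \tau_2$.

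I do not anticipate any genuine obstacle here; the argument uses nothing beyond the definition of a soft topology and elementary set-theoretic manipulations with the soft union and intersection introduced in Definitions~3 and~4. The only point worth mentioning for the reader is that the quantifier ``$\forall A \subseteq E$'' in the first axiom is preserved by intersecting the two families, because the condition $\phi_A \in \tau_i$ holds for \emph{every} $A$ in each $\tau_i$ individually. No use is made of complementation, which is consistent with the paper's guiding principle that the soft-topological part of the theory should be developed purely in terms of the openness property.
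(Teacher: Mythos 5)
Your proof is correct and is exactly the standard axiom-by-axiom verification that the paper has in mind; the paper itself omits the proof, declaring it straightforward and referring the reader to the analogous argument in the cited work of Shabir and Naz. Nothing further is needed.
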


\begin{theorem}
If $(\tilde{U_{E}},\tau)$ is a soft topological space then for every $e \in E$
 $(U(e), \tau(e))$ is a topological space.
\end{theorem}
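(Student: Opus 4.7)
The plan is to first unpack what $\tau(e)$ must mean here: since $\tilde{U_E}(e)=U$ for every $e\in E$, one naturally sets $U(e)=U$ and defines
\[
\tau(e)=\{\,F_A(e)\mid F_A\in\tau,\ e\in A\,\}\subseteq 2^{U},
\]
i.e.\ the family obtained by evaluating every soft open set (whose domain of actual parameters contains $e$) at the point $e$. With this reading I then have to verify the three classical topology axioms for $\tau(e)$ on $U$, and each one will follow directly from the corresponding soft axiom applied at the fixed parameter $e$.

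For the nullary axioms, I would note that $\phi_{\{e\}}\in\tau$ and $\phi_{\{e\}}(e)=\emptyset$, so $\emptyset\in\tau(e)$; and $\tilde{U_E}\in\tau$ with $\tilde{U_E}(e)=U$, so $U\in\tau(e)$. For arbitrary unions, I would start with a family $\{G_i\}_{i\in I}\subseteq\tau(e)$, choose for each $i$ a witness $F_{i_{A_i}}\in\tau$ with $e\in A_i$ and $F_{i_{A_i}}(e)=G_i$, and apply Definition~1.4 together with soft topology axiom (2) to get $H_C:=\tilde\bigcup_{i\in I}F_{i_{A_i}}\in\tau$ with $e\in C=\bigcup_i A_i$ and $I_e=I$, hence $H_C(e)=\bigcup_{i\in I}F_{i_{A_i}}(e)=\bigcup_i G_i\in\tau(e)$. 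For binary intersections, given $G_1,G_2\in\tau(e)$ with witnesses $F_{1_{A_1}},F_{2_{A_2}}\in\tau$ (both with $e$ in the domain), axiom (3) gives $F_{1_{A_1}}\tilde\cap F_{2_{A_2}}\in\tau$, and since $e\in A_1\cap A_2$ its value at $e$ is $F_{1_{A_1}}(e)\cap F_{2_{A_2}}(e)=G_1\cap G_2$, placing $G_1\cap G_2$ in $\tau(e)$.

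The only genuinely delicate point, and the one I would flag as the main obstacle, is the distinction between potential and actual parameters that the authors emphasized in the Introduction: the soft open set $\phi_A$ is only required to lie in $\tau$, yet we need $\emptyset$ to appear in $\tau(e)$ for \emph{every} $e\in E$. This is why I explicitly use $\phi_{\{e\}}$ (permitted by axiom (1), which quantifies over all $A\subseteq E$) rather than $\phi_E$, and similarly why I must insist that the witnesses $F_{i_{A_i}}$ and $F_{j_{A_j}}$ chosen in the union and intersection steps have $e$ in their domain of actual parameters, so that $I_e=I$ and $e\in A_1\cap A_2$, and the soft-set operations restrict correctly to ordinary set operations at $e$.
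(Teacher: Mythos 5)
Your proof is correct, and it is essentially the argument the paper has in mind: the paper omits the proof, referring to the analogous statement in Shabir--Naz, and your verification of the three axioms for $\tau(e)=\{F_A(e)\mid F_A\in\tau,\ e\in A\}$ is exactly that standard pointwise-evaluation argument. Your extra care in requiring $e\in A_i$ for the witnesses (so that $I_e=I$ and the soft union/intersection restrict to ordinary set operations at $e$) is precisely the adjustment needed for this paper's convention that $F_A(e)$ is undefined for $e\in E\setminus A$, so nothing is missing.
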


\begin{definition}
Let $x\in U$ and $A\subseteq E.$ A soft set $x_{A}$ defined by $x_{A}(e)={x}$ for every $e\in A$ is called  a soft point in $\tilde{U_{E}}$. 
A soft set $x_{A}$ is said to be in a soft set $F_{B}$ (denoted by $x_{A} \tilde{\in} F_{B}$) if $x\in F_{B}(e)$ for every $e\in A.$
\end{definition}

\begin{definition}
Given a soft topological space $(\tilde{U_{E}},\tau),$  
 a soft set $G_{B}\tilde{\subseteq}\tilde{U_{E}}$  is called a $\tau$-neighborhood of a soft set $x_{A}\tilde{\in}\tilde{U_{E}}$ 
if there exists an open soft set $H_{C}$ such that 
$x_{A}\tilde{\in} H_{C} \tilde{\subseteq} G_{B}.$ The family of all $\tau$-neighborhoods of $x_{A}$ is denoted by ${\mathfrak N}(x_{A}).$
\end{definition}
\noindent Obviously $\tilde{U_{E}}$ is a $\tau$-neighborhood  for every soft point $x_{A}$ and 
if $G_{B}\in {\mathfrak N}(x_{A})$ and $G_{B}\tilde{\subseteq} H_{C}$, then $H_{C}\in {\mathfrak N}(x_{A})$.

\begin{definition}

Given a soft topological space $(\tilde{U_{E}},\tau),$ let $F_{A},G_{B}\tilde {\subseteq}\tilde {U_{E}}.$ Then $G_{B}$ is called 
 a $\tau$-neighborhood of $F_{A}$ if there exists an open soft set $H_{C}$ such that 
$F_{A}\tilde{\subseteq} H_{C} \tilde{\subseteq} G_{B}.$ The family of all $\tau$-neighborhoods of $F_{A}$ is denoted by 
${\mathfrak N}(F_{A}).$

\end{definition}

\begin{definition}
Let $(\tilde{U_{E}},\tau)$ be a soft topological space and $F_{A}\tilde{\subseteq}\tilde{U_{E}}.$ The soft interior  of $F_{A}$ 
 is defined by:
$${\rm int}F_{A}=\tilde{\bigcup_{i\in I}}\{G_{B_{i}}\tilde{\subseteq} \tilde{U_{E}}: G_{B_{i}}\in \tau \mbox{ and } 
G_{B_{i}}\tilde{\subseteq} F_{A} \}.$$
\end{definition}
The  proof of the next two theorems can be done patterned e.g. after the proof of the analogous statements in \cite{CKE, SN, ZAM} 
\begin{theorem}
Let $(\tilde{U_{E}},\tau)$ be a soft topological space, $F_{A}\tilde{\subseteq}\tilde{U_{E}}.$ Then,
\begin{enumerate}
\item ${\rm int}F_{A}\tilde{\subseteq}F_{A}.$
\item ${\rm int}F_{A}$ is the largest open soft set contained in $F_{A}.$
\item $F_{A}$ is an open soft set if and only if ${\rm int}F_{A}=F_{A}.$
\item ${\rm int}({\rm int}F_{A})={\rm int}F_{A}.$ 
\item ${\rm int}  \phi_{A}= \phi_{A} \ (\forall A\subseteq E), \mbox{ } {\rm int} \tilde{U_{E}}=\tilde{U_{E}}.$
\end{enumerate}
\end{theorem}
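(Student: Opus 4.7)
The plan is to handle the five claims in order, because each one is either an immediate consequence of the definition of the soft interior or follows from a previous part, so the whole theorem collapses into an exercise in applying the soft topology axioms and the earlier propositions about $\tilde{\cup}$, $\tilde{\cap}$ and $\tilde{\subseteq}$.

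For (1), I would just read off the definition: ${\rm int}F_{A}$ is a $\tilde{\bigcup}$ indexed by soft sets $G_{B_{i}}$ each of which satisfies $G_{B_{i}}\tilde{\subseteq}F_{A}$, and a union of soft subsets of $F_{A}$ is again a soft subset of $F_{A}$ (this is a formal manipulation using Definition 2.4 and Definition 2.5). For (2), openness of ${\rm int}F_{A}$ is exactly axiom (2) of Definition 3.1 applied to the defining family; maximality is then immediate: if $H_{C}$ is any open soft set with $H_{C}\tilde{\subseteq}F_{A}$, then $H_{C}$ is one of the sets appearing in the union defining ${\rm int}F_{A}$, hence $H_{C}\tilde{\subseteq}{\rm int}F_{A}$.

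Items (3), (4), (5) then drop out. The forward direction of (3) uses that if $F_{A}$ is open, then $F_{A}$ itself appears in its own defining family, giving $F_{A}\tilde{\subseteq}{\rm int}F_{A}$, which combines with (1) to yield equality; the reverse direction of (3) uses (2), which already asserts that ${\rm int}F_{A}$ is open. Claim (4) is then (3) applied to the open soft set ${\rm int}F_{A}$. Claim (5) is (3) applied to $\phi_{A}$ and to $\tilde{U_{E}}$, both of which are open by axiom (1) of Definition 3.1.

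The only mild obstacle I foresee is bookkeeping on the parameter sets. Since the family defining ${\rm int}F_{A}$ consists of soft sets $G_{B_{i}}$ whose domains $B_{i}$ are all contained in $A$, by Definition 2.4 the parameter set of the union is $C=\bigcup_{i} B_{i}\subseteq A$, and one needs to verify both $C\subseteq A$ and the pointwise containment $({\rm int}F_{A})(e)\subseteq F_{A}(e)$ for every $e\in C$ in order to conclude ${\rm int}F_{A}\tilde{\subseteq}F_{A}$ in the sense of Definition 2.5 rather than a weaker pointwise sense. Once this check is made explicitly in (1), all subsequent parts go through without further subtlety, and the argument becomes a verbatim transcription of the classical crisp proof of the interior-operator axioms.
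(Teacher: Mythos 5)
Your argument is correct and is exactly the standard one the paper has in mind: the paper omits the proof of this theorem, referring the reader to analogous statements in the cited literature, and those proceed just as you do --- (1) and (2) read off from the definition of ${\rm int}$ together with the union axiom of a soft topology, with (3)--(5) as formal consequences. Your extra care with the parameter sets (checking $C=\bigcup_{i}B_{i}\subseteq A$ so that the containment is a genuine $\tilde{\subseteq}$ in the sense of Definition 2.5, given that values outside the actual parameter set are undefined in this paper's convention) is a worthwhile explicit addition rather than a gap.
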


\begin{theorem}
Let $(\tilde{U_{E}},\tau)$ be a soft topological space, $F_{A}, G_{B}\tilde{\subseteq}\tilde{U_{E}}.$ Then
\begin{enumerate}
\item If $F_{A}\tilde{\subseteq} G_{B}$ then ${\rm int}F_{A}\tilde{\subseteq}{\rm int}G_{B}.$
\item ${\rm int}(F_{A}\tilde{\cap} G_{B})={\rm int}F_{A}\tilde{\cap} {\rm int} G_{B}.$ 
\item ${\rm int}(F_{A}\tilde{\cup} G_{B})\tilde{\supseteq} {\rm int}F_{A}\tilde{\cup} {\rm int} G_{B}.$ 
\end{enumerate}
\end{theorem}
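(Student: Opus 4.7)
The plan is to reduce all three items to the two defining features of ${\rm int}F_A$ that were established in the immediately preceding theorem: it is an open soft set, and it is the \emph{largest} open soft subset of $F_A$. Once this is taken as the working characterization, monotonicity (1) is almost tautological; items (2) and (3) are then obtained by combining (1) with the fact, from the soft topology axioms, that finite intersections and arbitrary unions of open soft sets are open.

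For (1), I would start with ${\rm int}F_A\tilde{\subseteq}F_A\tilde{\subseteq}G_B$, so ${\rm int}F_A$ is an open soft subset of $G_B$. Since ${\rm int}G_B$ is by definition the union of all open soft subsets of $G_B$, ${\rm int}F_A$ is one of the sets entering this union, and hence ${\rm int}F_A\tilde{\subseteq}{\rm int}G_B$. Here I would invoke Proposition 2.11 to get transitivity of $\tilde{\subseteq}$ cleanly, since parameter sets differ in general.

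For (2), I prove two inclusions. The inclusion $\tilde{\subseteq}$ comes from applying (1) twice: $F_A\tilde{\cap}G_B\tilde{\subseteq}F_A$ gives ${\rm int}(F_A\tilde{\cap}G_B)\tilde{\subseteq}{\rm int}F_A$, and symmetrically for $G_B$; hence ${\rm int}(F_A\tilde{\cap}G_B)\tilde{\subseteq}{\rm int}F_A\tilde{\cap}{\rm int}G_B$. For the reverse inclusion, ${\rm int}F_A$ and ${\rm int}G_B$ are open, so by axiom (3) of a soft topology the soft set ${\rm int}F_A\tilde{\cap}{\rm int}G_B$ is open; by Proposition 2.11(5) it is contained in $F_A\tilde{\cap}G_B$, and so by the maximality characterization of the soft interior it lies inside ${\rm int}(F_A\tilde{\cap}G_B)$. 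Item (3) is a one-step consequence of (1): apply monotonicity to the inclusions $F_A\tilde{\subseteq}F_A\tilde{\cup}G_B$ and $G_B\tilde{\subseteq}F_A\tilde{\cup}G_B$ and then take the soft union of the two resulting inequalities.

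The only point that requires some care, and hence the main (mild) obstacle, is the bookkeeping of parameter domains, because the definitions of $\tilde{\cap}$, $\tilde{\cup}$, and $\tilde{\subseteq}$ intertwine the parameter sets $A$ and $B$ in a way that usual set-theoretic topology does not. In particular, I must make sure that when I write ${\rm int}(F_A\tilde{\cap}G_B)$ and ${\rm int}F_A\tilde{\cap}{\rm int}G_B$, both sides are supported on the same parameter set $A\cap B$, and similarly for the union in (3); this is where I rely on Definitions 2.3 and 2.4 together with Proposition 2.9 to rewrite inclusions as intersections/unions without ambiguity. Beyond this, no genuine new idea is required.
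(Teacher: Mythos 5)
Your proof is correct and follows exactly the standard argument the paper has in mind: the paper omits the proof of this theorem, deferring to the analogous statements in the cited references, and those proofs proceed precisely as you do --- via the characterization of ${\rm int}F_{A}$ as the largest open soft subset, plus the soft topology axioms for finite intersections and unions. Your attention to the parameter-set bookkeeping (that both sides of (2) live over $A\cap B$) is the only point where the soft setting differs from the classical one, and you handle it appropriately.
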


\subsection{$\tau$-continuous soft mappings and  open soft mappings}
In this section we reconsider 
basic concepts related to mappings of soft topological spaces in a form appropriate for us.  
We omit the proofs since they are almost verbatim the ones which can be found in the papers \cite{Mol, AFL,AAy,KA, SN, PAy}. 

\begin{definition}
(cf e.g.\cite{ZAM}) Let $(\tilde{U_{E}},\tau_{1}), (\tilde{V_{P}},\tau_{2})$ be two soft topological spaces and let
 $f = (\varphi,\psi): S(U,E) \to S(V,P)$, where $\varphi:U\rightarrow V, \psi: E \rightarrow P$ be mappings, be defined as in \ref{SoftF}.
We interpret $f$ as the soft function $f=(\varphi, \psi): (\tilde{U_{E}},\tau_{1})\rightarrow (\tilde{V_{P}},\tau_{2})$ and call 
it $\tau$-continuous at $x_{A}$ 
if for each $\tau$-neighborhood $G_{\psi(A)}$ of $f(x_{A}),$ there exists a $\tau$-neighborhood $H_{A}$ of $x_{A}$ 
such that $f(H_{A})\tilde{\subseteq} G_{\psi(A)}$.
Further, we call $f$ \  $\tau$-continuous on $\tilde{U_{E}}$ if it is $\tau$-continuous at each soft point of $\tilde{U_{E}}.$  
\end{definition}
The proof of the next four statements  can be done patterned after the proof of the analogous statements in e.g. \cite{ZAM} and \cite{AAy}: 
\begin{theorem} \label{topcontloc}
The following conditions are equivalent for a soft function $f=(\varphi, \psi): (\tilde{U_{E}},\tau_{1})\rightarrow (\tilde{V_{P}},\tau_{2}):$
\begin{enumerate}
\item $f$ is $\tau$-continuous at $x_{A}$,
\item For every $\tau$-soft neighborhood $G_{\psi(A)}$ of $f(x_{A}),$ there exists a $\tau$-neighborhood $H_{A}$ of $x_{A}$ such that $H_{A}\tilde{\subseteq} f^{-1}(G_{\psi(A)}).$ 
\item For any $\tau$-neighborhood $G_{\psi(A)}$ of $f(x_{A})$, $f^{-1}(G_{\psi(A)})$ is a $\tau$-neighborhood of $x_{A}.$
\end{enumerate} 
\end{theorem}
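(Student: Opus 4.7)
The plan is the standard cyclic implication $(1) \Rightarrow (2) \Rightarrow (3) \Rightarrow (1)$, the novelty being only the bookkeeping of the parameter sets $A$ and $\psi(A)$ and the use of the image/preimage inequalities established in the last theorem of the Preliminaries.

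First I would handle $(1) \Rightarrow (2)$. Given a $\tau$-neighborhood $G_{\psi(A)}$ of $f(x_{A})$, condition (1) yields a $\tau$-neighborhood $H_{A}$ of $x_{A}$ with $f(H_{A}) \tilde{\subseteq} G_{\psi(A)}$. Applying $f^{-1}$ and using the relation $H_{A} \tilde{\subseteq} f^{-1}(f(H_{A}))$ (part~(3) of the last theorem of Section~2) together with the monotonicity of $f^{-1}$ (which follows at once from its definition), I would conclude
\[
H_{A} \tilde{\subseteq} f^{-1}(f(H_{A})) \tilde{\subseteq} f^{-1}(G_{\psi(A)}),
\]
which is (2).

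For $(2) \Rightarrow (3)$, I would invoke the remark recorded immediately after the definition of a $\tau$-neighborhood of a soft point: if some $\tau$-neighborhood $H_{A}$ of $x_{A}$ satisfies $H_{A}\tilde{\subseteq} f^{-1}(G_{\psi(A)})$, then every soft superset of $H_{A}$, and in particular $f^{-1}(G_{\psi(A)})$ itself, is a $\tau$-neighborhood of $x_{A}$. For $(3) \Rightarrow (1)$, I would take $H_{A} := f^{-1}(G_{\psi(A)})$, which by (3) is a $\tau$-neighborhood of $x_{A}$, and then use $f(f^{-1}(G_{\psi(A)})) \tilde{\subseteq} G_{\psi(A)}$ (part~(1) of the last theorem of Section~2) to obtain $f(H_{A}) \tilde{\subseteq} G_{\psi(A)}$, recovering (1).

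The only step I expect to require slight care is matching the parameter sets, because $f^{-1}$ is defined on parameters $e \in \psi^{-1}(B)$ and $x_{A}$ lives on $A$; one needs to verify that $f^{-1}(G_{\psi(A)})$ indeed has a parameter domain containing $A$ so that it makes sense to speak of it as a neighborhood of $x_{A}$. This follows since $A \subseteq \psi^{-1}(\psi(A))$, so every parameter in $A$ lies in the domain of $f^{-1}(G_{\psi(A)})$. Beyond this routine verification, the proof is a direct transcription of the classical topological argument into the soft setting, using only the image/preimage relations from Definition~\ref{SoftF} and the ``superset of a neighborhood is a neighborhood'' remark.
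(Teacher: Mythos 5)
Your cyclic argument $(1)\Rightarrow(2)\Rightarrow(3)\Rightarrow(1)$ is correct, and it is exactly the standard proof the paper intends: the authors omit it, stating only that it can be patterned after the analogous results in the cited references, and your use of $H_{A}\tilde{\subseteq} f^{-1}(f(H_{A}))$, $f(f^{-1}(G_{\psi(A)}))\tilde{\subseteq} G_{\psi(A)}$, and the ``superset of a neighborhood is a neighborhood'' remark is precisely that pattern. Your added check that $A\subseteq\psi^{-1}(\psi(A))$, so that $f^{-1}(G_{\psi(A)})$ is defined on a parameter set containing $A$, is a worthwhile detail the paper glosses over.
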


\begin{theorem} \label{topcont}
A function $f=(\varphi, \psi ):(\tilde{U_{E}},\tau_{1})\rightarrow(\tilde{V_{P}},\tau_{2})$ is $\tau$-continuous iff the 
preimage of every  open  soft set of $\tau_{2}$ is an  open soft  set of $\tau_{1}.$
\end{theorem}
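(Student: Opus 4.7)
The plan is to prove both implications by unwinding the definitions, using the pointwise characterization in Theorem \ref{topcontloc} as the bridge.

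For the forward direction, I will assume $f$ is $\tau$-continuous on $\tilde{U_{E}}$ and take an arbitrary open soft set $G_{B}\in\tau_{2}$. To show that $f^{-1}(G_{B})$ lies in $\tau_{1}$, I will pick an arbitrary soft point $x_{A}\tilde{\in}f^{-1}(G_{B})$. Then $f(x_{A})\tilde{\in}G_{B}$, and since $G_{B}$ is itself open, it serves as its own $\tau_{2}$-neighborhood of $f(x_{A})$. Applying part (3) of Theorem \ref{topcontloc}, $f^{-1}(G_{B})$ is a $\tau_{1}$-neighborhood of $x_{A}$, so there exists an open soft set $H_{x_{A}}\in\tau_{1}$ with $x_{A}\tilde{\in}H_{x_{A}}\tilde{\subseteq}f^{-1}(G_{B})$. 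Forming the union of these $H_{x_{A}}$ over all soft points contained in $f^{-1}(G_{B})$ exhibits $f^{-1}(G_{B})$ as a union of open soft sets, hence open by axiom (2) in the definition of soft topology.

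For the reverse direction, I will assume preimages of open soft sets are open and verify $\tau$-continuity at an arbitrary soft point $x_{A}$ using part (3) of Theorem \ref{topcontloc}. Given a $\tau_{2}$-neighborhood $G_{\psi(A)}$ of $f(x_{A})$, the definition yields an open soft set $K\in\tau_{2}$ with $f(x_{A})\tilde{\in}K\tilde{\subseteq}G_{\psi(A)}$. By assumption $f^{-1}(K)\in\tau_{1}$. The monotonicity of $f^{-1}$ (Theorem on soft preimages, part for inclusions) gives $f^{-1}(K)\tilde{\subseteq}f^{-1}(G_{\psi(A)})$, and the relation $x_{A}\tilde{\in}f^{-1}(K)$ follows by a short check against Definition \ref{SoftF}: if $\varphi(x)\in K(\psi(e))$ for every $e\in A$, then $x\in\varphi^{-1}(K(\psi(e)))=f^{-1}(K)(e)$. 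Hence $f^{-1}(G_{\psi(A)})$ is a $\tau_{1}$-neighborhood of $x_{A}$, so by Theorem \ref{topcontloc} the map $f$ is $\tau$-continuous at $x_{A}$, and $x_{A}$ was arbitrary.

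The argument is structurally the standard general-topology proof; the only friction point is bookkeeping around the parameter sets, in particular verifying that the condition $f(x_{A})\tilde{\in}K$ really does translate to $x_{A}\tilde{\in}f^{-1}(K)$ under the asymmetric definitions of direct and inverse images (where $f(F_{A})$ lives on $\psi(A)$ while $f^{-1}(G_{B})$ lives on $\psi^{-1}(B)$). I expect this compatibility check, rather than the topological skeleton, to be the only step that merits care.
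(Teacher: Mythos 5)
Your proof is correct and follows exactly the standard argument that the paper itself defers to (the paper omits the proof of Theorem~\ref{topcont}, referring to \cite{ZAM} and \cite{AAy}): the reduction to the pointwise criterion of Theorem~\ref{topcontloc} and the compatibility check that $f(x_{A})\tilde{\in}K$ translates to $x_{A}\tilde{\in}f^{-1}(K)$ under the asymmetric image/preimage definitions are precisely the right ingredients, and your verification of the latter is sound. One small bookkeeping remark on the forward direction: if $f^{-1}(G_{B})$ takes the value $\emptyset$ at some parameter $e_{0}\in\psi^{-1}(B)$, then no soft point $x_{A}\tilde{\in}f^{-1}(G_{B})$ can have $e_{0}\in A$, so the union of the sets $H_{x_{A}}$ may have a strictly smaller parameter set than $f^{-1}(G_{B})$; this is repaired by also including $\phi_{\psi^{-1}(B)}$ (which is open by axiom (1)) in the union, after which equality of soft sets holds.
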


\begin{theorem}
Let $U,V,W$ be  universe sets, $E,P,K$ be  parameter sets and 
$f=(\varphi_{1}, \psi_{1}): (\tilde{U_{E}},\tau_{1})\rightarrow (\tilde{V_{P}},\tau_{2}),$ 
$g=(\varphi_{2}, \psi_{2}):(\tilde{V_{P}},\tau_{2})\rightarrow (\tilde{W_{K}},\tau_{3})$ be soft functions 
where $\varphi_{1}: U\rightarrow V, \psi_{1}: E\rightarrow P$ and $\varphi_{2}: V\rightarrow W, \psi_{2}: P\rightarrow K$ are mappings. 
If $f,g$ are $\tau$-continuous then 
$$g\circ f = (\psi_2 \circ \psi_2, \varphi_2\circ \varphi_2): (\tilde{U_{E}},\tau_{1}) \rightarrow  (\tilde{W_{K}},\tau_{3})$$ 
is $\tau$-continuous.
\end{theorem}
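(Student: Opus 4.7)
The plan is to reduce the statement to Theorem \ref{topcont}, which characterizes $\tau$-continuity in terms of preimages of open soft sets. Thus, instead of working with soft neighborhoods of soft points, it suffices to fix an arbitrary $F_C \in \tau_3$ and verify that the preimage $(g\circ f)^{-1}(F_C)$ lies in $\tau_1$.

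The crucial intermediate step is the soft analogue of the classical preimage-of-composition identity, namely
\[
(g\circ f)^{-1}(F_C) \;=\; f^{-1}\bigl(g^{-1}(F_C)\bigr).
\]
I would verify this by unwinding Definition \ref{SoftF} on both sides. On the parameter side, the domain of $(g\circ f)^{-1}(F_C)$ is $(\psi_2\circ\psi_1)^{-1}(C)$, which equals $\psi_1^{-1}(\psi_2^{-1}(C))$, and this is precisely the domain of $f^{-1}(g^{-1}(F_C))$. On the value side, for each such parameter $e$,
\[
(g\circ f)^{-1}(F_C)(e) \;=\; (\varphi_2\circ\varphi_1)^{-1}\bigl(F_C(\psi_2(\psi_1(e)))\bigr) \;=\; \varphi_1^{-1}\bigl(\varphi_2^{-1}(F_C(\psi_2(\psi_1(e))))\bigr),
\]
which is exactly $f^{-1}(g^{-1}(F_C))(e)$.

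With this identity in hand, the continuity hypotheses finish the argument immediately: since $g$ is $\tau$-continuous, Theorem \ref{topcont} gives $g^{-1}(F_C)\in\tau_2$; and since $f$ is $\tau$-continuous, the same theorem applied to $g^{-1}(F_C)$ yields $f^{-1}(g^{-1}(F_C))\in\tau_1$. Combining with the displayed identity produces $(g\circ f)^{-1}(F_C)\in\tau_1$, and one more application of Theorem \ref{topcont} (now in the other direction) concludes that $g\circ f$ is $\tau$-continuous.

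The only real obstacle is bookkeeping rather than conceptual: because a soft set in this paper is genuinely undefined on parameters outside its domain of actual parameters, one must be careful that $\psi_1^{-1}(\psi_2^{-1}(C))$ is the correct domain for $(g\circ f)^{-1}(F_C)$, and not some strictly smaller or larger subset of $E$. Verifying this carefully ensures that the preimage identity above is an equality of soft sets in the strict sense adopted in the Preliminaries, not just an equality on a common subdomain; once this is checked, the rest of the proof is a direct invocation of Theorem \ref{topcont} twice.
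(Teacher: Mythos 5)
Your proof is correct. Note that the paper itself gives no proof of this theorem: it is one of the four statements whose proofs are declared to be ``patterned after'' the analogous results in the cited works of Zorlutuna et al.\ and Aygunoglu--Aygun, so there is nothing in the text to compare against. Your route --- reducing to the global characterization of $\tau$-continuity in Theorem \ref{topcont} and establishing the identity $(g\circ f)^{-1}(F_C)=f^{-1}(g^{-1}(F_C))$ by unwinding Definition \ref{SoftF} on both the parameter side and the value side --- is exactly the standard argument those references use, and your explicit check that the parameter domain $(\psi_2\circ\psi_1)^{-1}(C)=\psi_1^{-1}(\psi_2^{-1}(C))$ matches on both sides is the right precaution given this paper's convention that a soft set is genuinely undefined outside its set of actual parameters. (Incidentally, the statement's formula $g\circ f=(\psi_2\circ\psi_2,\varphi_2\circ\varphi_2)$ is a typo for $(\varphi_2\circ\varphi_1,\psi_2\circ\psi_1)$; you have implicitly, and correctly, worked with the latter.)
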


\begin{theorem}
Let $f=(\varphi, \psi): (\tilde{U_{E}},\tau_{1})\rightarrow (\tilde{V_{P}},\tau_{2})$ be a soft function. $f$ is $\tau$-continuous if and only if
 for any $F_{A}\tilde{\subseteq}\tilde{V_{P}},$ $f^{-1}({\rm int}F_{A})\tilde{\subseteq} {\rm int}f^{-1}(F_{A}).$
\end{theorem}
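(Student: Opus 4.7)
The plan is to prove both implications by reducing to Theorem~\ref{topcont}, which characterises $\tau$-continuity in terms of preimages of open soft sets, together with the basic properties of the interior operator collected in the earlier theorems.

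For the forward direction, assume $f$ is $\tau$-continuous and fix an arbitrary $F_{A}\tilde{\subseteq}\tilde{V_{P}}$. First I would invoke part (2) of the theorem on interior properties to note that $\mathrm{int}\,F_{A}$ is an open soft set in $\tau_{2}$. Then Theorem~\ref{topcont} gives that $f^{-1}(\mathrm{int}\,F_{A})$ is an open soft set in $\tau_{1}$. Since $\mathrm{int}\,F_{A}\tilde{\subseteq}F_{A}$ by part (1), the monotonicity of preimage (clause 4 of the preimage theorem, combined with the usual $\tilde\subseteq$-preservation) yields $f^{-1}(\mathrm{int}\,F_{A})\tilde{\subseteq}f^{-1}(F_{A})$. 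Being an open soft subset of $f^{-1}(F_{A})$, it must be contained in the largest such open soft set, namely $\mathrm{int}\,f^{-1}(F_{A})$, which gives the desired inclusion.

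For the converse, assume the inclusion $f^{-1}(\mathrm{int}\,F_{A})\tilde{\subseteq}\mathrm{int}\,f^{-1}(F_{A})$ for every $F_{A}\tilde{\subseteq}\tilde{V_{P}}$, and apply Theorem~\ref{topcont}: it suffices to show that $f^{-1}(G_{B})\in\tau_{1}$ whenever $G_{B}\in\tau_{2}$. Let $G_{B}\in\tau_{2}$. By part (3) of the interior-properties theorem, $\mathrm{int}\,G_{B}=G_{B}$. Substituting into the hypothesis,
\[
f^{-1}(G_{B}) \;=\; f^{-1}(\mathrm{int}\,G_{B}) \;\tilde{\subseteq}\; \mathrm{int}\,f^{-1}(G_{B}).
\]
Combining this with the general inequality $\mathrm{int}\,f^{-1}(G_{B})\tilde{\subseteq}f^{-1}(G_{B})$ from part (1) of the same theorem yields $f^{-1}(G_{B})=\mathrm{int}\,f^{-1}(G_{B})$, which by part (3) means $f^{-1}(G_{B})\in\tau_{1}$. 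This establishes $\tau$-continuity via Theorem~\ref{topcont}.

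Neither direction presents a real obstacle; the arguments reduce to bookkeeping with the interior operator and the preimage. The only subtle point worth double-checking is that all the manipulations respect the distinction between actual and potential parameters: the inclusion $f^{-1}(\mathrm{int}\,F_{A})\tilde{\subseteq}f^{-1}(F_{A})$, the equality $f^{-1}(\mathrm{int}\,G_{B})=f^{-1}(G_{B})$, and the preservation of openness under preimage in Theorem~\ref{topcont} must each be read with the domain convention of Definition~\ref{SoftF} in mind. Once this is verified, the two short chains above complete the proof.
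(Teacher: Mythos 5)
Your proof is correct. The paper itself gives no argument for this theorem---it is one of the four statements whose proofs are omitted with a pointer to \cite{ZAM} and \cite{AAy}---but your two-way reduction to Theorem~\ref{topcont} (forward: $\mathrm{int}\,F_{A}$ is open, its preimage is open and sits inside $f^{-1}(F_{A})$, hence inside the largest open subset $\mathrm{int}\,f^{-1}(F_{A})$; converse: apply the hypothesis to an open $G_{B}$ with $\mathrm{int}\,G_{B}=G_{B}$ to force $f^{-1}(G_{B})=\mathrm{int}\,f^{-1}(G_{B})$) is exactly the standard argument those references use, and every ingredient you cite (monotonicity of the preimage, the characterisation of $\mathrm{int}$ as the largest open soft subset, openness iff $F_{A}=\mathrm{int}\,F_{A}$) is available in the paper's earlier theorems. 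Your closing caution about the actual-versus-potential parameter convention is well placed but harmless here, since $G_{B_i}\tilde{\subseteq}F_{A}$ forces $B_i\subseteq A$ and the preimage construction of Definition~\ref{SoftF} respects $\tilde{\subseteq}$.
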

\begin{example}
Let $\varphi: U\rightarrow U, \psi: E\rightarrow E$ be  identity mappings. Then the soft mapping $i = (\varphi, \psi):
(\tilde{U_{E}},\tau_{1})\rightarrow (\tilde{U_{E}},\tau_{2})$ is $\tau$-continuous if and only if  $\tau_{2}\subseteq \tau_{1}.$
\end{example}
\begin{definition}
(cf e.g. \cite{AAy}) 
A soft function  $f=(\varphi, \psi): (\tilde{U_{E}},\tau_{1})\rightarrow (\tilde{V_{P}},\tau_{2})$ is called  open if the image of 
every  open soft set from $\tau_{1}$ is  open in $\tau_{2}.$
\end{definition}
\begin{theorem}
$f=(\varphi, \psi): (\tilde{U_{E}},\tau_{1})\rightarrow (\tilde{V_{P}},\tau_{2})$ is an open soft function if
and only if  $f({\rm int}F_{A})\tilde{\subseteq}{\rm int}[f(F_{A})]$ for every $F_{A} \tilde{\subseteq} \tilde{U_{E}}.$
\end{theorem}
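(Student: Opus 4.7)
The plan is to handle the two implications of the biconditional separately, using mainly the characterization of interior given in the earlier theorem (that $\mathrm{int}\,F_{A}$ is the largest open soft set contained in $F_{A}$, that $F_{A}$ is open iff $\mathrm{int}\,F_{A}=F_{A}$, and monotonicity of the interior), together with the fact that a soft function preserves inclusions (item 4 of the first theorem on soft functions).

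For the forward direction, I assume $f$ is open and fix an arbitrary $F_{A}\tilde{\subseteq}\tilde{U_{E}}$. First I note that $\mathrm{int}\,F_{A}\in\tau_{1}$ and $\mathrm{int}\,F_{A}\tilde{\subseteq} F_{A}$. Applying $f$ gives, by monotonicity, $f(\mathrm{int}\,F_{A})\tilde{\subseteq} f(F_{A})$, and by openness of $f$ the soft set $f(\mathrm{int}\,F_{A})$ lies in $\tau_{2}$. Since $\mathrm{int}[f(F_{A})]$ is the largest open soft subset of $f(F_{A})$, the inclusion $f(\mathrm{int}\,F_{A})\tilde{\subseteq}\mathrm{int}[f(F_{A})]$ follows immediately.

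For the converse, I assume $f(\mathrm{int}\,F_{A})\tilde{\subseteq}\mathrm{int}[f(F_{A})]$ for every $F_{A}\tilde{\subseteq}\tilde{U_{E}}$, and take an arbitrary $F_{A}\in\tau_{1}$. Then $\mathrm{int}\,F_{A}=F_{A}$, so the hypothesis yields $f(F_{A})=f(\mathrm{int}\,F_{A})\tilde{\subseteq}\mathrm{int}[f(F_{A})]$. Combined with the generally valid inclusion $\mathrm{int}[f(F_{A})]\tilde{\subseteq} f(F_{A})$, this gives $\mathrm{int}[f(F_{A})]=f(F_{A})$, so $f(F_{A})\in\tau_{2}$, proving that $f$ is open.

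No step here is really an obstacle; the only mild issue is to make sure that in the conclusion $\mathrm{int}[f(F_{A})]=f(F_{A})$ I am using the correct characterization (item 3 of the interior theorem), and that the monotonicity of $f$ with respect to $\tilde{\subseteq}$ is invoked where needed. Both facts are already established in the preliminaries, so the proof is a short and direct application of them.
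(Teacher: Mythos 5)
Your proof is correct and is exactly the standard argument: the forward direction uses that $f(\mathrm{int}\,F_{A})$ is an open soft subset of $f(F_{A})$ together with the maximality of the interior, and the converse specializes the inclusion to open $F_{A}$ and uses $\mathrm{int}\,F_{A}=F_{A}$. The paper itself omits the proof of this theorem (deferring to the cited literature), and your argument is the one it implicitly relies on, so there is nothing to add.
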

\begin{theorem}
Let $U,V,W$ be  universe sets, $E,P,K$ be parameter sets and 
$f=(\varphi_{1}, \psi_{1}): (\tilde{U_{E}},\tau_{1})\rightarrow (\tilde{V_{P}},\tau_{2}),$ 
$g=(\varphi_{2}, \psi_{2}):(\tilde{V_{P}},\tau_{2})\rightarrow (\tilde{W_{K}},\tau_{3})$ be soft functions 
where $\varphi_{1}: U\rightarrow V, \psi_{1}: E\rightarrow P$ and $\varphi_{2}: V\rightarrow W, \psi_{2}: P\rightarrow K$ are mappings. 
If $f,g$ are  open then $g\circ f$ is  open, too.
\end{theorem}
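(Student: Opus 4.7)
The plan is to take an arbitrary open soft set in $\tau_{1}$, push it forward by $f$, then by $g$, and at each step invoke the openness hypothesis. Since the composition $g\circ f$ is built from the composed component maps $\varphi_{2}\circ\varphi_{1}:U\to W$ and $\psi_{2}\circ\psi_{1}:E\to K$, the only substantive point beyond bookkeeping is the compatibility formula $(g\circ f)(F_{A})=g(f(F_{A}))$ at the level of soft sets.

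First I would fix $F_{A}\in\tau_{1}$ and unfold Definition \ref{SoftF} for each of $f$, $g$, and $g\circ f$. For $k\in(\psi_{2}\circ\psi_{1})(A)$, the value $(g\circ f)(F_{A})(k)$ is
\[
(\varphi_{2}\circ\varphi_{1})\Bigl(\bigcup_{e\in(\psi_{2}\circ\psi_{1})^{-1}(k)}F(e)\Bigr),
\]
while $g(f(F_{A}))(k)$ equals
\[
\varphi_{2}\Bigl(\bigcup_{p\in\psi_{2}^{-1}(k)}f(F_{A})(p)\Bigr)
=\varphi_{2}\Bigl(\bigcup_{p\in\psi_{2}^{-1}(k)}\varphi_{1}\bigl(\bigcup_{e\in\psi_{1}^{-1}(p)}F(e)\bigr)\Bigr).
\]
Since $\varphi_{1}$ commutes with unions and $\psi_{1}^{-1}(\psi_{2}^{-1}(k))=(\psi_{2}\circ\psi_{1})^{-1}(k)$, the two expressions agree on their common domain $(\psi_{2}\circ\psi_{1})(A)=\psi_{2}(\psi_{1}(A))$, which gives the identification $(g\circ f)(F_{A})=g(f(F_{A}))$.

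Next, because $f$ is open and $F_{A}\in\tau_{1}$, the soft set $f(F_{A})$ lies in $\tau_{2}$. Then, because $g$ is open and $f(F_{A})\in\tau_{2}$, we get $g(f(F_{A}))\in\tau_{3}$. Combining this with the compatibility formula from the previous paragraph yields $(g\circ f)(F_{A})\in\tau_{3}$, so $g\circ f$ is an open soft function.

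The only genuine obstacle is the compatibility identity $(g\circ f)(F_{A})=g(f(F_{A}))$, since the extra bookkeeping introduced by the potential/actual parameter distinction in Definition \ref{SoftF} means one has to verify both that the domains of definition coincide and that the values agree on this common domain. Once that is settled, the rest of the proof is a two-line application of the openness hypotheses, in complete parallel to the analogous continuity result already stated in the excerpt.
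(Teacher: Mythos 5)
Your proof is correct and is exactly the argument the paper has in mind: the paper omits the proof of this theorem (as it does for most statements in Section 3), and the intended argument is precisely to verify $(g\circ f)(F_{A})=g(f(F_{A}))$ from Definition~\ref{SoftF} and then apply the two openness hypotheses in succession. Your care with the compatibility identity, including the fact that $\varphi_{1}$ commutes with unions and $\psi_{1}^{-1}(\psi_{2}^{-1}(k))=(\psi_{2}\circ\psi_{1})^{-1}(k)$ (restricted to $A$), covers the only nontrivial point.
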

\subsection{$\tau$-separation axioms for soft topological spaces}
Separation Axioms for soft topological spaces were investigated by M.Shabir and M. Naz in \cite{SN} and B. Pazar Varol and H. Aygun in \cite{PAy}. 
In these papers separation axioms are defined on the basis of classical points. We revise the definitions and theorems from \cite{SN}, \cite{PAy}
for soft points defined and come to the following
\begin{definition}
Let $(\tilde{U_{E}},\tau)$ be a soft topological space, and let $x_{A}, y_{A} (x\neq y, x,y \in U, A\subseteq E)$ 
be two different soft points of $\tilde{U_{E}}.$ The space $(\tilde{U_{E}},\tau)$ is called
\begin{itemize} 
\item $\tau$-soft $T_{0}$-space if there exists a $\tau$-neighborhood $G_{A}$ of $x_{A}$ 
such that $y_{A}\tilde{\notin} G_{A}$ or there exists a $\tau$-neighborhood $G_{A}$ of $y_{A}$ such that $x_{A}\tilde{\notin} G_{A}.$
\item $\tau$-soft 
$T_{1}$-space if there exist $\tau$-neighborhoods $G_{A}, H_{A}$ of $x_{A},y_{A}$ respectively 
such that $y_{A}\tilde{\notin} G_{A}$ and $x_{A}\tilde{\notin} H_{A}.$
\item $\tau$-soft $T_{2}$-space if 
there exist $\tau$-neighborhoods $G_{A}, H_{A}$ of $x_{A},y_{A}$ respectively such that $G_{A}\tilde{\cap} H_{A}= \phi_{A}.$
\end{itemize}
\end{definition}
\begin{theorem} \label{topT1}
Let $(\tilde{U_{E}},\tau)$ be a soft topological space. If $x_{A}^{c}$ is an open soft set 
for each $x\in U, A\subseteq E$ then $(\tilde{U_{E}},\tau)$ is a $\tau$-soft $T_{1}$-space.
\end{theorem}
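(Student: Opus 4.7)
The plan is to use the hypothesis directly to produce the separating neighborhoods, essentially by taking complements of the opposite soft point. The key observation is that if $x \neq y$ in $U$ and $A \subseteq E$, then the soft set $y_{A}^{c}$ has parameter set $A$ with $y_{A}^{c}(e) = U \setminus \{y\}$ for every $e \in A$, and this set is open by hypothesis. It therefore lies in $\mathfrak{N}(x_A)$ through the chain $x_{A} \tilde{\in} y_{A}^{c} \tilde{\subseteq} y_{A}^{c}$, and symmetrically $x_{A}^{c}$ will lie in $\mathfrak{N}(y_A)$.

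First I would fix two distinct soft points $x_{A}, y_{A}$ with $x \neq y$, and set $G_{A} := y_{A}^{c}$ and $H_{A} := x_{A}^{c}$; both are open by the hypothesis of the theorem. Second, I would verify the membership $x_{A} \tilde{\in} G_{A}$: since $y_{A}^{c}(e) = U \setminus \{y\}$ for every $e \in A$ and $x \neq y$, we have $x \in y_{A}^{c}(e)$ for every $e \in A$, which is exactly the definition of $x_{A} \tilde{\in} y_{A}^{c}$. Symmetrically $y_{A} \tilde{\in} H_{A}$. Third, I would check the non-membership $y_{A} \tilde{\notin} G_{A}$: since $y \notin U \setminus \{y\} = y_{A}^{c}(e)$ for any $e \in A$, the condition $y \in G_{A}(e)$ fails for every $e \in A$, so certainly $y_{A} \tilde{\notin} G_{A}$. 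Likewise $x_{A} \tilde{\notin} H_{A}$. By definition this makes $(\tilde{U_{E}},\tau)$ a $\tau$-soft $T_{1}$-space.

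I do not expect any real obstacle here; the statement is essentially a direct unwinding of definitions. The only point where one must be a little careful is the bookkeeping of parameter sets: the hypothesis is phrased for $x_A^c$ with $A$ ranging over all subsets of $E$, which is exactly what lets us take $G_A, H_A$ with the same parameter set $A$ as $x_A$ and $y_A$, matching the literal shape required by the $T_1$ definition (which insists both neighborhoods be indexed by $A$). If the hypothesis had only given openness of $x_E^c$, one would need an extra argument to pass from parameter set $E$ down to $A$, but as stated this step is free.
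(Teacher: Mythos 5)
Your proof is correct and follows essentially the same route as the paper's: both take $G_{A}=y_{A}^{c}$ and $H_{A}=x_{A}^{c}$ as the separating open neighborhoods and verify the required memberships and non-memberships directly from the definition of a soft point and its complement. Your write-up is merely more explicit about the parameter-set bookkeeping, which the paper leaves implicit.
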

\begin{proof}
Let $x_{A}^{c}$ be an open soft set and $y_{A}\neq x_{A}.$ Then $y_{A}\tilde{\in} x_{A}^{c} $ 
and $x_{A}\tilde{\notin}x_{A}^{c}$ and similarly $x_{A}\tilde{\in} y_{A}^{c} $ and $y_{A}\tilde{\notin}y_{A}^{c}.$ 
This shows that $(\tilde{U_{E}},\tau)$ is a $\tau$-soft $T_{1}$-space.
\end{proof}

Obviously 
if $(\tilde{U_{E}},\tau)$ is a $\tau$-soft $T_{2}$-space then it is a $\tau$-soft $T_{1}$-space and  
if $(\tilde{U_{E}},\tau)$ is a $\tau$-soft $T_{1}$-space then it is $\tau$-soft $T_{0}$-space. 
The converses  generally are not true as shown by the next two examples:
\begin{example}
Let $U=\{x,z\}$ be the universe set, $E=\{e_{1},e_{2},e_{3},e_{4}\}$ be the parameter set, $A=\{e_{1},e_{2}\}$ 
and $\tau=\{\phi_{A}, \phi_{E}, \tilde{U_{E}}, F_{A}, G_{A}\}$ be the soft topology where,
$F_{A}=\{e_{1}=\{x\}, e_{2}=\{x,z\}\}$ and 
$G_{A}=\{e_{1}=\{x\}\}.$ One can easily see that
$(\tilde{U_{E}},\tau)$ is a $\tau$-soft $T_{0}$-space. 
However there does not exit an  open soft set containing $z_{A}$ but not containing $x_{A}$, and  
hence $(\tilde{U_{E}},\tau)$ is not a $\tau$-soft $T_{1}$-space.
\end{example}

\begin{example}
Let $U=\{x,y\}$ be the universe set $E=\{e_{1},e_{2},e_{3}\}$ be the parameter set and 
$\tau=\{\phi_{A},\phi_{B}, \phi_{C},\phi_{E}, \tilde{U_{E}}, F_{A}, G_{A}, D_{A}, T_{A}, H_{B}, K_{B}, I_{C}, L_{C}, \}$ be the soft topology where
\begin{eqnarray*}
F_{A}=\{e_{1}=\{x\}, e_{2}=\{x,y\}\},\\
G_{A}=\{e_{1}=\{x,y\}, e_{2}=\{y\}\}, \\ 
D_{A}=\{e_{1}=\{x\}, e_{2}=\{y\}\}, \\ 
T_{A}=\{e_{1}=\{y\}, e_{2}=\{x\}\}, \\ 
H_{B}=\{e_{1}=\{x\}\},\\
K_{B}=\{e_{1}=\{y\}\},\\
I_{C}=\{e_{2}=\{y\}\},\\
L_{C}=\{e_{2}=\{x\}\}.\\
\end{eqnarray*}
Then $(\tilde{U_{E}},\tau)$ is a $\tau$-soft $T_{1}$-space. But it is not a $\tau$-soft $T_{2}$-space since for $x_{A}\neq y_{A},$ there do not exist $\tau$-neighborhoods $F_{A}, G_{A}$ of $x_{A}, y_{A}$  such that $F_{A}\tilde{\cap} G_{A}= \phi_{A}.$
\end{example}

\begin{theorem} \label{ContHaus}
Let $(\tilde{U_{E}},\tau_{1}),(\tilde{V_{P}},\tau_{2})$ be two soft topological spaces 
and $f=(\varphi, \psi):(\tilde{U_{E}},\tau_{1})\rightarrow (\tilde{V_{P}},\tau_{2})$ be an 
injective $\tau$-continuous function. If $(\tilde{V_{P}},\tau_{2})$ is a $\tau$-soft $T_{2}$-space, 
then $(\tilde{U_{E}},\tau_{1})$ is a $\tau$-soft $T_{2}$-space.
\end{theorem}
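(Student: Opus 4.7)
The plan is to argue directly from the $T_2$ property in the codomain, pull back a pair of disjoint neighborhoods through $f^{-1}$, and use the injectivity of $\psi$ to get a clean identification of the parameter set.

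First I would fix two distinct soft points $x_A, y_A \tilde{\in} \tilde{U_E}$ with $x \neq y$ and look at their images $f(x_A)$ and $f(y_A)$ in $\tilde{V_P}$. Since $\varphi$ is injective we have $\varphi(x) \neq \varphi(y)$, and unwinding Definition \ref{SoftF} one checks that $f(x_A)$ and $f(y_A)$ are soft points with the same parameter set $\psi(A)$, namely $\varphi(x)_{\psi(A)}$ and $\varphi(y)_{\psi(A)}$, and that they are distinct. The $\tau$-soft $T_2$ property of $(\tilde{V_P}, \tau_2)$ then produces $\tau$-neighborhoods $G_{\psi(A)}$ of $f(x_A)$ and $H_{\psi(A)}$ of $f(y_A)$ with $G_{\psi(A)} \tilde{\cap} H_{\psi(A)} = \phi_{\psi(A)}$.

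Next I would apply the characterization of $\tau$-continuity from Theorem \ref{topcontloc}(3) pointwise at $x_A$ and $y_A$: the preimages $f^{-1}(G_{\psi(A)})$ and $f^{-1}(H_{\psi(A)})$ are $\tau_1$-neighborhoods of $x_A$ and $y_A$ respectively. Then, using that preimage commutes with soft intersection (the theorem preceding Theorem \ref{topcontloc}), I compute
\[
f^{-1}(G_{\psi(A)}) \tilde{\cap} f^{-1}(H_{\psi(A)}) = f^{-1}\bigl(G_{\psi(A)} \tilde{\cap} H_{\psi(A)}\bigr) = f^{-1}(\phi_{\psi(A)}).
\]

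The step that requires care, and is really the only place where injectivity of $\psi$ is used, is identifying $f^{-1}(\phi_{\psi(A)})$ with $\phi_A$. By the definition of the preimage, the domain of $f^{-1}(\phi_{\psi(A)})$ is $\psi^{-1}(\psi(A))$, which equals $A$ precisely because $\psi$ is injective; and on this domain its value is $\varphi^{-1}(\emptyset) = \emptyset$. Hence $f^{-1}(\phi_{\psi(A)}) = \phi_A$, so the two $\tau_1$-neighborhoods produced above have intersection $\phi_A$, matching the parameter set required by the $\tau$-soft $T_2$ definition. This shows $(\tilde{U_E},\tau_1)$ is a $\tau$-soft $T_2$-space. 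The only genuine obstacle is the bookkeeping on parameter sets; everything else is transport of structure through $f^{-1}$.
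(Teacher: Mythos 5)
Your proposal is correct and follows essentially the same route as the paper's proof: use injectivity to separate the images, invoke the $T_2$ property in the codomain, and pull the disjoint neighborhoods back through $f^{-1}$ via continuity. The only difference is that you explicitly justify the identification $f^{-1}(\phi_{\psi(A)})=\phi_{A}$ through $\psi^{-1}(\psi(A))=A$, a parameter-set detail the paper's proof asserts without comment.
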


\begin{proof}
Let $x_{A}\neq y_{A}$ be two soft points of $\tilde{U_{E}}.$ Then  $f(x_{A})\neq f(y_{A})$ since $f$ is an injective function. 
There exist $\tau$-neighborhoods $F_{\psi(A)}, G_{\psi(A)}$ of $f(x_{A}), f(y_{A})$ respectively such that 
$F_{\psi(A)}\tilde{\cap} G_{\psi(A)}=\phi_{\psi(A)}.$ Hence $f^{-1}(F_{\psi(A)}), f^{-1}(G_{\psi(A)})$ are 
$\tau$-neighborhoods of $x_{A}, y_{A}$ respectively such that  $f^{-1}(F_{\psi(A)})\tilde{\cap} f^{-1}(G_{\psi(A)})=\phi_{A}.$ 
This shows that $(\tilde{U_{E}},\tau_{1})$ is a $\tau$-soft $T_{2}$-space.
\end{proof}
\begin{theorem}
Let $(\tilde{U_{E}},\tau_{1}),(\tilde{V_{P}},\tau_{2})$ be two soft topological spaces and 
$f=(\varphi, \psi):(\tilde{U_{E}},\tau_{1})\rightarrow (\tilde{V_{P}},\tau_{2})$ be a bijective  open soft function. 
If $(\tilde{U_{E}},\tau_{1})$ is a $\tau$-soft $T_{2}$-space then $(\tilde{V_{P}},\tau_{2})$  is a $\tau$-soft $T_{2}$-space.
\end{theorem}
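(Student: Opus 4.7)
The plan is to mimic the proof of Theorem \ref{ContHaus}, but running it in the reverse direction, transporting disjoint neighborhoods from $\tilde{U_{E}}$ to $\tilde{V_{P}}$ via the open map $f$ rather than pulling them back.

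First I would pick two distinct soft points $u_{B}, v_{B}$ of $\tilde{V_{P}}$ (so $u\neq v$ in $V$ and $B\subseteq P$). Since $\varphi$ and $\psi$ are bijective, I would set $x=\varphi^{-1}(u)$, $y=\varphi^{-1}(v)$ and $A=\psi^{-1}(B)$; these give two distinct soft points $x_{A}, y_{A}$ in $\tilde{U_{E}}$ with $f(x_{A})=u_{B}$ and $f(y_{A})=v_{B}$. Applying the $\tau$-soft $T_{2}$ property in $(\tilde{U_{E}},\tau_{1})$ furnishes $\tau_{1}$-neighborhoods of $x_{A}, y_{A}$ whose intersection is $\phi_{A}$; by unfolding the definition of neighborhood, I may replace these by open soft sets $H_{1}, H_{2}\in\tau_{1}$, both with parameter set $A$, satisfying $x_{A}\tilde{\in}H_{1}$, $y_{A}\tilde{\in}H_{2}$ and $H_{1}\tilde{\cap}H_{2}=\phi_{A}$.

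Next I would use the fact that $f$ is open to conclude that $f(H_{1}), f(H_{2})\in\tau_{2}$; both carry parameter set $\psi(A)=B$. A short check via the definition of $f$ shows $u_{B}\tilde{\in}f(H_{1})$ and $v_{B}\tilde{\in}f(H_{2})$, so they are $\tau_{2}$-neighborhoods of $u_{B}, v_{B}$ respectively. Finally I would verify $f(H_{1})\tilde{\cap}f(H_{2})=\phi_{B}$: for $p\in B$, bijectivity of $\psi$ makes $\psi^{-1}(p)$ a singleton $\{e\}\subseteq A$, so
\[
(f(H_{1})\tilde{\cap}f(H_{2}))(p)=\varphi(H_{1}(e))\cap\varphi(H_{2}(e))=\varphi(H_{1}(e)\cap H_{2}(e))=\varphi(\emptyset)=\emptyset,
\]
where the middle equality uses injectivity of $\varphi$. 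This exhibits the required disjoint neighborhoods and proves that $(\tilde{V_{P}},\tau_{2})$ is a $\tau$-soft $T_{2}$-space.

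The main technical obstacle is the disjointness step: in general soft images only satisfy $f(F\tilde{\cap}G)\tilde{\subseteq}f(F)\tilde{\cap}f(G)$ (cf.\ the earlier image theorem), so one cannot conclude $f(H_{1})\tilde{\cap}f(H_{2})=\phi_{B}$ without appealing to injectivity of both $\varphi$ and $\psi$ — this is precisely why the hypothesis demands $f$ to be bijective rather than merely surjective and open. A secondary nuisance is bookkeeping of parameter sets (identifying $\psi^{-1}(B)$ with $A$, noting that the open sets chosen inside $F_{A}, G_{A}$ must themselves have parameter set $A$, etc.), but this is routine once one carefully tracks definitions.
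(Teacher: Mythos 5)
Your proof is correct, and the paper itself states this theorem without proof, so there is nothing to compare against; your argument is exactly the expected dual of the proof of Theorem \ref{ContHaus}, pushing disjoint open neighborhoods forward through the open map and using injectivity of both $\varphi$ and $\psi$ to preserve disjointness of images. Your side remarks are also on point: the parameter-set bookkeeping works out because $x_{A}\tilde{\in}H_{C}\tilde{\subseteq}G_{A}$ forces $C=A$ under the paper's definitions, and the disjointness step is precisely where bijectivity (rather than mere surjectivity) is needed.
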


\begin{definition}
A soft topological space $(\tilde{U_{E}},\tau)$
is called $\tau$-soft regular if for every  $x_{A}\tilde{\in} \tilde{U_{E}}$ and for every  $F_{A}\neq \phi_{E}\tilde{\subseteq} 
\tilde{U_{E}},$ such that  $x_{A}\tilde{\in} F_{A}^{c}\in \tau,$ 
there exist $G_{A}\in {\mathfrak N} (x_{A}), H_{A}\in {\mathfrak N} (F_{A})$ such that $V_{A}\tilde{\cap} H_{A}= \phi_{A}$. \\
 If a soft topological space $(\tilde{U_{E}},\tau)$ is both $\tau$-soft regular and  a 
$\tau$-soft $T_{1}$-space then it is called a $\tau$-soft $T_{3}$-space.
\end{definition}


The next example shows that a $\tau$-soft regular space need  not be a $\tau$-soft $T_{1}$-space:
\begin{example}
Let $U=\{x,y,z\},$ be the universe set $E=\{e_{1}, e_{2}, e_{3}\}$ be the parameter set 
and $\tau=\{\phi_{A}, \phi_{E},\tilde{U_{E}}, F_{A}, G_{A}, H_{A}\}$ be the soft topology where,
\begin{eqnarray*}
F_{A}=\{e_{1}=\{x\}\},\\
G_{A}=\{e_{1}=\{y,z\}\},\\
\{e_{1}=\{x,y,z\}\}.
\end{eqnarray*}
Then $(\tilde{U_{E}},\tau)$ is a $\tau$-soft regular space which is not a $\tau$-soft $T_{1}$-space.
\end{example}
\begin{definition}
A soft topological space $(\tilde{U_{E}},\tau)$ is called $\tau$-soft normal space if for any 
soft sets $F_{A}, G_{A}\tilde{\subseteq} \tilde{U_{E}}$ 
where $F_{A}^{c}, G_{A}^{c}\in \tau$ and $F_{A}\tilde{\cap} G_{A}=\phi_{A}$, 
there exist $V_{A} \in {\mathfrak N}(F_{A}),$ 
$W_{A}\in {\mathfrak N}(G_{A})$ 
such that $V_{A}\tilde{\cap} W_{A}=\phi_{A}$. 
In case $(\tilde{U_{E}},\tau)$ is both $\tau$-soft normal and a  $\tau$-soft $T_{1}$-space then it is called a $\tau$-soft $T_{4}$-space.
\end{definition}

\section{Soft cotopological spaces and  closed soft sets}
\begin{definition}
Let $U$ be a universe and $E$ be the parameter set.   A family $\kappa$ of subsets of $\tilde{U_{E}}$ 
is called a soft cotopology if the following holds:
\begin{enumerate}
\item $\phi_{A}, \tilde{U_{E}}\in \kappa \mbox{ }(\forall A \subseteq E).$
\item If $\{K_{i_{A_{i}}}\tilde{\subseteq}\tilde{U_{E} : i \in I}\}\subseteq\kappa$ then $\tilde{\bigcap_{i\in I}} K_{i_{A_{i}}}\in \kappa.$
\item If $ K_{A}, L_{B} \in \kappa$ then $ K_{A}\tilde{\cup}L_{B}\in \kappa.$
\end{enumerate}
Every member of $\kappa$ is called  a  closed soft set and the pair $(\tilde{U_{E}},\kappa)$ is called a soft cotopological space.
\end{definition}
\begin{definition}
Let $\kappa_{1}, \kappa_{2}$ be two soft cotopologies on $\tilde{U_{E}}.$ Then 
$\kappa_{2}$ is called coarser than $\kappa_{1}$ (denoted by $\kappa_{2}\subseteq \kappa_{1}$) if $F_{A} \in \kappa_{1}$ whenever 
$F_{A} \in \kappa_{2}$.
\end{definition}
\begin{theorem}
If $(\tilde{U_{E}},\kappa)$ is a soft cotopological space then for every $e \in E$ $(U(e), \kappa(e))$ is a cotopological space in the sense of
{\rm \cite{Brown3}}.
\end{theorem}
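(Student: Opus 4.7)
The plan is to mirror the argument for Theorem~3.4 (the open-set analogue) and define, for each $e \in E$, the candidate cotopology on $U$ by taking ``slices'' of the closed soft sets whose actual parameter set contains $e$. Concretely, I set
\[
\kappa(e) \;=\; \{ F_A(e) \;:\; F_A \in \kappa \text{ and } e \in A \} \;\subseteq\; 2^{U(e)},
\]
since $F_A(e)$ is only defined when $e \in A$. I then verify the three cotopology axioms from Brown's setting (containment of the empty set and the full space, closure under arbitrary intersections, closure under finite unions) for the ordinary family $\kappa(e)$.

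First I handle the trivial members. Since $\phi_{\{e\}} \in \kappa$ by axiom~(1) of soft cotopology and $\phi_{\{e\}}(e) = \emptyset$, we have $\emptyset \in \kappa(e)$. Similarly $\tilde{U_E} \in \kappa$ with $\tilde{U_E}(e) = U$, so the full space is in $\kappa(e)$.

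For arbitrary intersections, let $\{V_i : i \in I\} \subseteq \kappa(e)$ and choose witnesses $F_{i,A_i} \in \kappa$ with $e \in A_i$ and $F_{i,A_i}(e) = V_i$ for each $i$. By axiom~(2) of soft cotopology, $G_C := \tilde{\bigcap}_{i\in I} F_{i,A_i} \in \kappa$, and by Definition~2.3 we have $C = \bigcap_{i\in I} A_i$ and $G_C(e) = \bigcap_{i\in I} F_{i,A_i}(e) = \bigcap_{i\in I} V_i$. Crucially, $e$ lies in every $A_i$, hence $e \in C$, so $G_C(e)$ is defined and witnesses $\bigcap_{i\in I} V_i \in \kappa(e)$. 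The argument for finite unions is the same, using axiom~(3) of soft cotopology together with Definition~2.4: for $V_1, V_2 \in \kappa(e)$ with witnesses $K_A, L_B \in \kappa$ where $e \in A \cap B$, the soft union $K_A \tilde{\cup} L_B$ has parameter set $A \cup B \ni e$ and value $K_A(e) \cup L_B(e) = V_1 \cup V_2$ at $e$.

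The only real bookkeeping obstacle is ensuring $e$ stays in the parameter set of the constructed soft set, so that the evaluation at $e$ actually makes sense under the convention of Definition~2.1. For intersections this is the delicate direction (because $C$ shrinks), but it is automatic from our choice of witnesses with $e \in A_i$; for unions the parameter set only grows, so no issue arises. No other subtlety appears, so the proof reduces to the indicated definition-chasing.
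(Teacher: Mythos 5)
Your proof is correct and is exactly the routine slice-wise verification the paper has in mind (the paper omits the proof of this theorem, as it does for its open-set analogue, treating it as straightforward). Your attention to keeping $e$ in the parameter set of the witnesses --- so that evaluation at $e$ is defined under the paper's convention that $F_A(e)$ is undefined for $e \notin A$ --- is precisely the one point worth making explicit, and you handle it correctly for both intersections and unions.
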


\begin{theorem}
If $(\tilde{U_{E}},\kappa_{1})$ and $(\tilde{U_{E}},\kappa_{2})$ are soft cotopological spaces then 
$(\tilde{U_{E}},\kappa_{1}\cap \kappa_{2})$ is a soft cotopological space.
\end{theorem}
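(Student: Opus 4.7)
The plan is to verify directly that the family $\kappa_1\cap\kappa_2$ satisfies the three defining axioms of a soft cotopology, using the fact that $\kappa_1$ and $\kappa_2$ each satisfy them.

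First I would handle axiom (1): since each $\kappa_i$ is a soft cotopology, for every $A\subseteq E$ we have $\phi_A\in\kappa_1$ and $\phi_A\in\kappa_2$, whence $\phi_A\in\kappa_1\cap\kappa_2$; similarly $\tilde{U_E}\in\kappa_1\cap\kappa_2$. Next, for axiom (2), I would take an arbitrary family $\{K_{i_{A_i}} \mid i\in I\}\subseteq \kappa_1\cap\kappa_2$. Then this family is contained in $\kappa_1$, so $\tilde\bigcap_{i\in I}K_{i_{A_i}}\in\kappa_1$, and likewise contained in $\kappa_2$, so $\tilde\bigcap_{i\in I}K_{i_{A_i}}\in\kappa_2$; combining, the intersection lies in $\kappa_1\cap\kappa_2$. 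For axiom (3), if $K_A,L_B\in\kappa_1\cap\kappa_2$, then membership in each $\kappa_i$ separately gives $K_A\tilde\cup L_B\in\kappa_i$ for $i=1,2$, so $K_A\tilde\cup L_B\in\kappa_1\cap\kappa_2$.

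Since all three axioms are verified, $(\tilde{U_E},\kappa_1\cap\kappa_2)$ is a soft cotopological space. There is no real obstacle here: the argument is a direct transcription of the standard proof that the intersection of two topologies is a topology, dualised to the cotopology setting where finite unions and arbitrary intersections take the place of finite intersections and arbitrary unions. The only minor point worth flagging is that, because $A$ is allowed to vary over subsets of $E$ in axiom (1), one must record that \emph{every} null soft set $\phi_A$ (not just $\phi_E$) belongs to $\kappa_1\cap\kappa_2$, but this too is immediate from the corresponding property of $\kappa_1$ and $\kappa_2$.
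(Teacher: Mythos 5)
Your proof is correct and is exactly the routine axiom-by-axiom verification the paper intends: the theorem is stated there without proof, as a straightforward dual of the corresponding fact for soft topologies. Your remark that axiom (1) requires \emph{every} $\phi_A$ for $A\subseteq E$, not just $\phi_E$, is a sensible point of care but, as you note, causes no difficulty.
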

To describe the local structure of a soft cotopological space we explore Wang's idea of the so called {\it remote neighborhood} \cite{Wang}.
\begin{definition}
Let $(\tilde{U_{E}},\kappa)$ be a soft cotopological space, $M_{B}\tilde{\subseteq} \tilde{U_{E}}$ and 
$x_{A} \tilde{\in} \tilde{U_{E}}.$ A soft set $M_{B}$ is called a soft remote neighborhood of $x_{A}$ if there exists 
a closed soft set $K_{C}$ such that $x_{A} \tilde{\notin} K_{C}\tilde{\supseteq} M_{B}.$
The family of all soft remote neighborhoods of $x_{A}$ is denoted by $\Re_{N}(x_{A}).$
\end{definition}

\begin{definition}
Let $(\tilde{U_{E}},\kappa)$ be a soft cotopological space, $F_{A}, S_{B}\tilde{\subseteq} \tilde{U_{E}}.$ Then $S_{B}$ 
is called a soft remote neighborhood of $F_{A}$ if there exists a  closed soft set $K_{C}$ such that $F_{A}$ is not 
a subset of $K_{C}$ and $K_{C}\tilde{\supseteq} S_{B}.$
\end{definition}
\begin{theorem}
Let $(\tilde{U_{E}},\kappa)$ be a soft cotopological space, $F_{B},G_{C}\tilde{\subseteq} \tilde{U_{E}}$ and let $x_{A}$ 
be a soft point of $\tilde{U_{E}}.$ Then the following holds: 
\begin{enumerate}
\item $\phi_{E}$ is a soft remote neighborhood of every soft point of $\tilde{U_{E}}.$
\item If $G_{C}\in \Re_{N}(x_{A}),$ $F_{B}\tilde{\subseteq} G_{C}$ then $F_{B}\in \Re_{N}(x_{A}).$
\end{enumerate}
\end{theorem}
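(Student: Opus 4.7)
The plan is to unpack both statements directly from the definition of soft remote neighborhood and to invoke two elementary facts already established in the paper: the first axiom of a soft cotopology, which gives us $\phi_E \in \kappa$, and transitivity of the soft subset relation from Proposition 2.11(4).

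For part (1), I would exhibit $\phi_E$ itself as the witnessing closed soft set. Since $\phi_E \in \kappa$ by the first cotopology axiom, it suffices to check that $x_A \tilde{\notin} \phi_E$ and that $\phi_E \tilde{\supseteq} \phi_E$. The second containment is trivial. For the first, recall from the definition of $x_A \tilde{\in} F_B$ that membership requires $x \in F_B(e)$ for every $e \in A$. Since $\phi_E(e) = \emptyset$ for all $e \in E \supseteq A$, no element $x$ can lie in $\phi_E(e)$, so $x_A \tilde{\notin} \phi_E$. Hence $\phi_E$ is a soft remote neighborhood of every soft point $x_A$.

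For part (2), I would chase definitions. From $G_C \in \Re_N(x_A)$ there exists a closed soft set $K_D \in \kappa$ with $x_A \tilde{\notin} K_D$ and $K_D \tilde{\supseteq} G_C$. Given $F_B \tilde{\subseteq} G_C$, the transitivity of $\tilde{\subseteq}$ (Proposition 2.11(4)) yields $F_B \tilde{\subseteq} K_D$, i.e., $K_D \tilde{\supseteq} F_B$. The very same $K_D$ therefore witnesses that $F_B \in \Re_N(x_A)$.

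There is no real obstacle here; both assertions follow immediately once the definitions of $\tilde{\in}$, $\tilde{\subseteq}$, and soft remote neighborhood are read carefully. The only mild subtlety is to notice that the soft point $x_A$ can be compared with $\phi_E$ because $A \subseteq E$, and that the emptiness of $\phi_E(e)$ automatically forces $x_A \tilde{\notin} \phi_E$, so no further assumption on the space is needed.
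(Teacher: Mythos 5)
Your argument is correct and is exactly the routine definition-chase that the paper evidently intends: the paper states this theorem without proof, and your verification (taking $\phi_E$ itself as the witnessing closed soft set for part (1), since $\phi_E\in\kappa$ by the first cotopology axiom and $x_A\tilde{\notin}\phi_E$ because every value of $\phi_E$ is empty; and reusing the closed witness $K_D$ for part (2) via transitivity of $\tilde{\subseteq}$) is the obvious intended one. No issues.
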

\begin{definition}
Let $(\tilde{U_{E}},\kappa)$ be a soft cotopological space, $F_{A} \tilde{\subseteq} \tilde{U_{E}}$. A soft point $x_{A}$ of 
 $\tilde{U_{E}}$  is said to be a soft adherence point of $F_{A}$ if $M_{A}\tilde{\cup}F_{A}^{c}\neq \tilde{U_{A}}$ for any soft remote 
neighborhood $M_{A}$ of $x_{A}$.  
The family of all soft adherence points of $F_{A}$ is called the closure of $F_{A}$ and it is denoted by ${\rm cl} F_{A}.$
\end{definition}
\begin{theorem} \label{closure}
Let $(\tilde{U_{E}},\kappa)$ be a soft cotopological space and $F_{A} \tilde{\subseteq} \tilde{U_{E}}.$ Then
 $${\rm cl} F_{A}=\tilde{\cap}\{K_{A}\tilde{\subseteq} \tilde{U_{E}}: K_{A}\in \kappa \mbox{ and } K_{A}\tilde{\supseteq}F_{A}\}.$$
\end{theorem}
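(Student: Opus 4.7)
The plan is to prove the identity by double inclusion. Let me denote $H_A := \tilde{\cap}\{K_A \tilde{\subseteq} \tilde{U_E}: K_A\in\kappa,\ K_A\tilde{\supseteq}F_A\}$ for brevity.

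First I would show $\mathrm{cl}\,F_A \tilde{\subseteq} H_A$. Let $x_A$ be a soft adherence point of $F_A$, and pick any closed $K_A\in\kappa$ with $K_A \tilde{\supseteq} F_A$. Arguing by contradiction, suppose $x_A \tilde{\notin} K_A$. Then $K_A$ itself witnesses that $K_A$ is a soft remote neighborhood of $x_A$ (take $M_A = K_A$ in the definition). But then $F_A \tilde{\subseteq} K_A$ gives
\[
K_A \tilde{\cup} F_A^{c} \tilde{\supseteq} F_A \tilde{\cup} F_A^{c} = \tilde{U_A},
\]
so $K_A \tilde{\cup} F_A^{c} = \tilde{U_A}$, which contradicts $x_A$ being an adherence point. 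Hence $x_A \tilde{\in} K_A$, and since $K_A$ was arbitrary, $x_A \tilde{\in} H_A$.

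For the reverse inclusion $H_A \tilde{\subseteq} \mathrm{cl}\,F_A$, I would assume $x_A \tilde{\in} H_A$ and check the defining condition for adherence points directly. Take any soft remote neighborhood $M_A$ of $x_A$; by definition there is a closed soft set $K_C\in\kappa$ with $M_A \tilde{\subseteq} K_C$ and $x_A \tilde{\notin} K_C$. Suppose toward a contradiction that $M_A \tilde{\cup} F_A^c = \tilde{U_A}$. Unpacking this coordinatewise, $M_A(e)\cup(U\setminus F_A(e)) = U$ for every $e\in A$, equivalently $F_A \tilde{\subseteq} M_A$, and therefore $F_A \tilde{\subseteq} K_C$. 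This would exhibit $K_C$ as a closed superset of $F_A$ not containing $x_A$, contradicting $x_A \tilde{\in} H_A$. Thus $M_A \tilde{\cup} F_A^c \neq \tilde{U_A}$, which is exactly what is needed for $x_A$ to be an adherence point.

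The main obstacle I anticipate is the bookkeeping of parameter sets: the witnessing closed set $K_C$ produced by the remote-neighborhood definition can have parameter set $C\ne A$, whereas the intersection defining $H_A$ is written with closed soft sets indexed by $A$. To handle this cleanly I would either interpret the subscript $K_A$ in the statement as a free label (so that the intersection ranges over all closed soft supersets of $F_A$), or replace $K_C$ by its restriction so that the inclusion $F_A \tilde{\subseteq} K_C$ still forces $x_A$ to lie outside a legitimate closed superset of $F_A$. Once this parameter-set technicality is handled, both implications are driven by the same simple duality $F_A \tilde{\subseteq} K \iff K \tilde{\cup} F_A^{c} \tilde{\supseteq} \tilde{U_A}$, and the result follows.
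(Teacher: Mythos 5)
Your proposal is correct and follows essentially the same route as the paper's own proof: both directions rest on the duality $M_A \tilde{\cup} F_A^{c}=\tilde{U_A} \iff F_A \tilde{\subseteq} M_A$, converting between closed supersets of $F_A$ missing $x_A$ and remote neighborhoods violating the adherence condition. Your remark on the parameter-set bookkeeping is a fair point that the paper silently suppresses by writing every closed set with subscript $A$, but it does not change the argument.
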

\begin{proof}
  Let $x_{A}\tilde{\in} \tilde{\cap}\{K_{A}\tilde{\subseteq} \tilde{U_{E}}: K_{A}\in \kappa \mbox{ and } K_{A}\tilde{\supseteq}F_{A}\}$ 
and we assume that $x_{A}\tilde{\notin} {\rm cl} F_{A}.$ Then there exists a  closed soft set $L_{A}$ 
not containing $x_{A}$ such that $L_{A}\tilde{\cup} F_{A}^{c}=\tilde{U_{A}}.$ Then $L_{A}^{c} \tilde{\subseteq}F_{A}^{c}$ 
and so $F_{A}\tilde{\subseteq}L_{A}.$ Hence there exists a closed soft set $L_{A}$ such that $x_{A}\tilde{\notin} L_{A}\tilde{\supseteq} F_{A}.$ 
This shows that $x_{A}\tilde{\notin} \tilde{\cap}\{K_{A}\tilde{\subseteq} \tilde{U_{E}}: K_{A}\in \kappa \mbox{ and } K_{A}\tilde{\supseteq}F_{A}\}.$ 
This is a contradiction.\\
Conversely let $x_{A}\tilde{\in} {\rm cl} F_{A}$ 
and $x_{A}\tilde{\notin} \tilde{\cap}\{K_{A}\tilde{\subseteq} \tilde{U_{E}}: K_{A}\in \kappa \mbox{ and } K_{A}\tilde{\supseteq}F_{A}\}.$ 
Then there exists a closed soft set $K_{A}$ such that $x_{A}\tilde{\notin} K_{A} \tilde{\supseteq} F_{A}.$ 
Hence $K_{A}$ is a soft remote neighborhood of $x_{A}$ and $F_{A}^{c}\tilde{\cup}K_{A}=\tilde{U_{A}}.$ 
It shows that $x_{A}\tilde{\notin}{\rm cl} F_{A}.$ This is a contradiction.
\end{proof}
From here one can esily establish the following useful properties of closure operator.
\begin{theorem} \label{closure1}
Let $(\tilde{U_{E}},\kappa)$ be a soft cotopological space and $F_{A} \tilde{\subseteq} \tilde{U_{E}}.$ Then the followings hold:
\begin{enumerate}
\item $F_{A}\tilde{\subseteq} {\rm cl} F_{A}.$
\item ${\rm cl} F_{A}$ is the smallest closed soft set containing $F_{A}.$
\item a soft set $F_{A}$ is closed if and only if $F_{A}={\rm cl} F_{A}.$
\item ${\rm cl}{\rm cl}F_{A}={\rm cl}F_{A}.$
\end{enumerate}
\end{theorem}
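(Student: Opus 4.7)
The plan is to derive all four properties from the intersection characterization of closure provided by Theorem~\ref{closure}, so that the work reduces to an essentially routine chain of deductions. I would first record the identity
$$\mathrm{cl}\,F_{A} \;=\; \tilde{\bigcap}\,\{K_{A}\,\tilde{\subseteq}\,\tilde{U_{E}} : K_{A}\in\kappa,\ K_{A}\,\tilde{\supseteq}\,F_{A}\}$$
and treat it as the working definition of closure for the entire proof. The family on the right is never empty, since $\tilde{U_{E}}\in\kappa$ and $F_{A}\,\tilde{\subseteq}\,\tilde{U_{E}}$; this observation will be needed in (1) and (2).

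For (1) I would observe that $F_{A}\,\tilde{\subseteq}\,K_{A}$ for every member $K_{A}$ of the indexing family, so $F_{A}\,\tilde{\subseteq}\,\mathrm{cl}\,F_{A}$ by the definition of soft intersection. For (2), the fact that $\mathrm{cl}\,F_{A}$ is closed comes directly from the second axiom in the definition of a soft cotopology (arbitrary soft intersections of closed soft sets are closed). Minimality is then immediate: any closed soft set $L_{A}$ with $F_{A}\,\tilde{\subseteq}\,L_{A}$ appears as an index in the family, and hence $\mathrm{cl}\,F_{A}\,\tilde{\subseteq}\,L_{A}$ by the intersection characterization.

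For (3), if $F_{A}$ is closed, then $F_{A}$ itself belongs to the intersecting family, giving $\mathrm{cl}\,F_{A}\,\tilde{\subseteq}\,F_{A}$; combined with (1) this yields $\mathrm{cl}\,F_{A}=F_{A}$. The converse is immediate, because (2) makes $\mathrm{cl}\,F_{A}$ a closed soft set, and so equality $F_{A}=\mathrm{cl}\,F_{A}$ forces $F_{A}\in\kappa$. Finally (4) is a one-line corollary: by (2), $\mathrm{cl}\,F_{A}$ is closed, and by (3) any closed soft set is a fixed point of $\mathrm{cl}$, so $\mathrm{cl}\,(\mathrm{cl}\,F_{A})=\mathrm{cl}\,F_{A}$.

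The only delicate point, and really the one step that needs care rather than rote application of Theorem~\ref{closure}, is handling the indexing of soft subsets: the definition of closure produced in Theorem~\ref{closure} uses closed soft sets whose parameter set is $A$, and one must check that this matches the general intersection convention of Definition~3 so that the resulting soft set is indeed defined on $A$. Once this bookkeeping is in order, nothing else in the argument is substantive; no separate handling of the parameter set beyond this check is required.
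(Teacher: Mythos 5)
Your proposal is correct and matches the paper's intended argument exactly: the paper gives no explicit proof, stating only that the four properties ``easily follow'' from the intersection characterization of Theorem~\ref{closure}, and your derivation (closedness of the intersection from the cotopology axioms, minimality from membership of any closed superset in the indexing family, and (3)--(4) as formal consequences) is precisely that routine chain. The parameter-set bookkeeping you flag, including whether $\tilde{U_{E}}$ itself qualifies as a member of a family indexed by soft sets with parameter set $A$, is a looseness inherited from the paper's own statement of Theorem~\ref{closure} rather than a gap in your argument.
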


From Theorem \ref{closure1} easily follows:
\begin{theorem}
Given a soft cotopological space $(\tilde{U_{E}},\kappa),$ let $F_{A}, G_{B}, H_{C} \tilde{\subseteq} \tilde{U_{E}}.$ Then the following holds:
\begin{enumerate}
\item If $F_{A}\tilde{\subseteq} G_{B}$ then ${\rm cl}F_{A}\tilde{\subseteq} {\rm cl} G_{B}.$
\item ${\rm cl}(G_{B}\tilde{\cup}H_{C})= {\rm cl}G_{B}\tilde{\cup} {\rm cl}H_{C}.$
\item ${\rm cl}(F_{A}\tilde{\cap} G_{B})\tilde{\subseteq} {\rm cl}F_{A}\tilde{\cap} {\rm cl}G_{B}.$
\item ${\rm cl} \tilde{U_{E}}=\tilde{U_{E}}, {\rm cl} \phi_{A}=\phi_{A} (A\tilde{\subseteq} E).$
\end{enumerate} 
\end{theorem}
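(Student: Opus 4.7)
The plan is to deduce all four parts from Theorem \ref{closure}, which characterizes the closure as $\mathrm{cl}\,F_A = \tilde\bigcap\{K_A \tilde{\subseteq}\tilde{U_E} : K_A \in \kappa,\ K_A \tilde{\supseteq} F_A\}$, together with the fact (Theorem \ref{closure1}(2),(3)) that $\mathrm{cl}\,F_A$ is the smallest closed soft set containing $F_A$ and a soft set equals its closure iff it is closed. In this way I avoid working directly with soft remote neighborhoods.

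First I would dispatch the monotonicity part (1) by the standard ``intersection reverses inclusion'' trick: if $F_A \tilde{\subseteq} G_B$, then every closed soft set $K$ with $K \tilde{\supseteq} G_B$ automatically satisfies $K \tilde{\supseteq} F_A$, so the family of closed supersets appearing in the intersection defining $\mathrm{cl}\,G_B$ is contained in the family defining $\mathrm{cl}\,F_A$. Consequently $\mathrm{cl}\,F_A \tilde{\subseteq} \mathrm{cl}\,G_B$. Part (3) is then immediate: since $F_A \tilde{\cap} G_B$ is a soft subset of both $F_A$ and $G_B$, applying (1) twice yields $\mathrm{cl}(F_A \tilde{\cap} G_B) \tilde{\subseteq} \mathrm{cl}\,F_A$ and $\mathrm{cl}(F_A \tilde{\cap} G_B) \tilde{\subseteq} \mathrm{cl}\,G_B$, hence the inclusion into the intersection.

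For part (2), one inclusion ($\mathrm{cl}\,G_B \,\tilde{\cup}\, \mathrm{cl}\,H_C \tilde{\subseteq} \mathrm{cl}(G_B \tilde{\cup} H_C)$) follows from (1) applied to $G_B \tilde{\subseteq} G_B \tilde{\cup} H_C$ and $H_C \tilde{\subseteq} G_B \tilde{\cup} H_C$. For the reverse inclusion, I would observe that $\mathrm{cl}\,G_B$ and $\mathrm{cl}\,H_C$ are closed soft sets, so by axiom (3) of a soft cotopology their union $\mathrm{cl}\,G_B \,\tilde{\cup}\, \mathrm{cl}\,H_C$ is again closed; it obviously contains $G_B \tilde{\cup} H_C$ because $G_B \tilde{\subseteq} \mathrm{cl}\,G_B$ and $H_C \tilde{\subseteq} \mathrm{cl}\,H_C$. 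By Theorem \ref{closure1}(2), $\mathrm{cl}(G_B \tilde{\cup} H_C)$ is the smallest such closed soft set, so $\mathrm{cl}(G_B \tilde{\cup} H_C) \tilde{\subseteq} \mathrm{cl}\,G_B \,\tilde{\cup}\, \mathrm{cl}\,H_C$, giving equality. Finally part (4) is a one-liner: axiom (1) of soft cotopology gives $\tilde{U_E}, \phi_A \in \kappa$, so by Theorem \ref{closure1}(3) each equals its own closure.

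The only real subtlety is bookkeeping of parameter sets --- one must check, for example, that the union $\mathrm{cl}\,G_B \,\tilde{\cup}\, \mathrm{cl}\,H_C$ carries the correct parameter set $B \cup C$ to match $\mathrm{cl}(G_B \tilde{\cup} H_C)$, and similarly for intersections in (3). These match by the very definitions of $\tilde\cup$ and $\tilde\cap$ in Definitions 2.3--2.4, so no genuine obstacle arises; the whole theorem is essentially a formal consequence of Theorem \ref{closure} and the lattice structure of $\kappa$.
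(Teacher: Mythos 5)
Your proposal is correct and matches the paper's intent: the paper omits the proof entirely, remarking only that the theorem ``easily follows'' from Theorem~\ref{closure1}, and your derivation (monotonicity from the intersection characterization, the union case via closedness of $\mathrm{cl}\,G_B\,\tilde{\cup}\,\mathrm{cl}\,H_C$ and minimality, the rest as formal consequences) is exactly the standard argument being alluded to. Your remark on parameter-set bookkeeping is a reasonable extra precaution, at or above the paper's own level of rigor.
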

\begin{definition}
Let $(\tilde{U_{E}},\kappa)$ be a soft cotopological space, $F_{A} \tilde{\subseteq} \tilde{U_{E}} $.
A soft point $x_{A}$ is called a soft accumulation point of $F_{A}$ if 
$(M_{A}\tilde{\cup}x_{A})\tilde{\cup} F_{A}^{c}\neq \tilde{U_{A}}$ for every soft 
remote neighborhood $M_{A}$ of $x_{A}$. 
The family $F_{A}^{'}$ of all soft accumulation points of $F_{A}$ is called the accumulation of $F_{A}$. 
\end{definition}
One can easily see that every soft accumulation point of a soft set $F_{A}$ is its soft adherence point.
\begin{theorem}
Let $(\tilde{U_{E}},\kappa)$ be a soft cotopological space and $F_{A} \tilde{\subseteq} \tilde{U_{E}}.$ 
Then $F_{A}\tilde{\cup} F_{A}^{'}$ is a closed soft set.
\end{theorem}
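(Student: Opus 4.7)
The plan is to establish the equality $F_A \tilde{\cup} F_A' = {\rm cl}\, F_A$, from which the theorem follows immediately since Theorem~\ref{closure1} guarantees that ${\rm cl}\, F_A$ is a closed soft set (indeed, the smallest closed soft set containing $F_A$). One inclusion is essentially bookkeeping: $F_A \tilde{\subseteq} {\rm cl}\, F_A$ is Theorem~\ref{closure1}(1), and $F_A' \tilde{\subseteq} {\rm cl}\, F_A$ follows from the remark just before the statement, namely that every soft accumulation point of $F_A$ is a soft adherence point. Taking unions yields $F_A \tilde{\cup} F_A' \tilde{\subseteq} {\rm cl}\, F_A$.

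For the reverse inclusion I would pick a soft point $x_A \tilde{\in} {\rm cl}\, F_A$ with $x_A \tilde{\notin} F_A$ and prove $x_A \tilde{\in} F_A'$. Let $M_A$ be an arbitrary soft remote neighborhood of $x_A$. Soft adherence says $M_A \tilde{\cup} F_A^c \neq \tilde{U_A}$, which unfolds to: there exist $e \in A$ and $u \in F_A(e) \setminus M_A(e)$. The goal is to obtain $(M_A \tilde{\cup} x_A) \tilde{\cup} F_A^c \neq \tilde{U_A}$, i.e.\ a witness $(e,u)$ as above with the additional property $u \neq x$. The hypothesis $x_A \tilde{\notin} F_A$ supplies some parameter $e_0 \in A$ with $x \notin F_A(e_0)$; at this parameter any element of $F_A(e_0)$ automatically differs from $x$, so relocating the witness to $e_0$ would provide the required $u \neq x$ and complete the argument that $x_A \tilde{\in} F_A'$.

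The chief obstacle lies in this relocation. The soft non-membership $x_A \tilde{\notin} F_A$ is only a pointwise statement --- it asserts $x \notin F_A(e_0)$ for \emph{some} $e_0 \in A$, not for all $e \in A$ --- and the adherence witness $(e,u)$ produced from the given $M_A$ need not occur at $e_0$. At a different parameter it is \emph{a priori} possible that every element of $F_A(e) \setminus M_A(e)$ equals $x$. Closing this gap will likely require exploiting the closed set underlying the remote neighborhood: starting from a closed $K_C \tilde{\supseteq} M_A$ with $x_A \tilde{\notin} K_C$ afforded by the definition, one could try to enlarge $M_A$ so as to force a witness different from $x$, or else split the argument by parameters to transfer the adherence witness to $e_0$. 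Once the equality $F_A \tilde{\cup} F_A' = {\rm cl}\, F_A$ is secured, the conclusion is immediate from Theorem~\ref{closure1}(2).
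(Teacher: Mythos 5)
Your proposal is incomplete, and you say so yourself: the entire content of the theorem ends up concentrated in the reverse inclusion ${\rm cl}\,F_{A}\tilde{\subseteq}F_{A}\tilde{\cup}F_{A}^{'}$, which you do not establish. Note also a structural danger: the equality ${\rm cl}\,F_{A}=F_{A}\tilde{\cup}F_{A}^{'}$ is precisely Theorem~\ref{accumulation}, which the paper proves \emph{from} the present theorem (the easy inclusion as you describe it, and then ${\rm cl}\,F_{A}\tilde{\subseteq}F_{A}\tilde{\cup}F_{A}^{'}$ by minimality of the closure among closed sets containing $F_{A}$, using that $F_{A}\tilde{\cup}F_{A}^{'}$ is already known to be closed). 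So you cannot quote that equality, and deriving it first by a pointwise argument is exactly the hard part you leave open.

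The paper takes the opposite route: it argues by contradiction directly on closedness. Assuming $F_{A}\tilde{\cup}F_{A}^{'}$ is not closed, it picks $x_{A}\tilde{\in}{\rm cl}(F_{A}\tilde{\cup}F_{A}^{'})$ with $x_{A}\tilde{\notin}F_{A}\tilde{\cup}F_{A}^{'}$; from $x_{A}\tilde{\notin}F_{A}^{'}$ it extracts a soft remote neighborhood $M_{A}$ with $(M_{A}\tilde{\cup}x_{A})\tilde{\cup}F_{A}^{c}=\tilde{U_{A}}$, uses $x_{A}\tilde{\notin}F_{A}$ to drop $x_{A}$ from the union and conclude $x_{A}\tilde{\notin}{\rm cl}\,F_{A}$, hence $x_{A}\tilde{\notin}{\rm cl}(F_{A}\tilde{\cup}F_{A}^{'})$ --- a contradiction. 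To your credit, the parameterwise subtlety you flag is real and is not actually resolved by the paper either: the step ``since $x_{A}\tilde{\notin}F_{A}$, $M_{A}\tilde{\cup}F_{A}^{c}=\tilde{U_{A}}$'' needs $x\notin F_{A}(e)$ for \emph{every} $e\in A$, whereas $x_{A}\tilde{\notin}F_{A}$ only guarantees it for \emph{some} $e\in A$; this is the same relocation problem you describe, merely appearing one step earlier. So you have correctly identified the genuine difficulty, but as it stands your proposal does not prove the theorem: you must either close the relocation gap (e.g.\ by exploiting the closed set $K_{C}$ underlying $M_{A}$, or by strengthening the non-membership used), or abandon the equality-first strategy and argue closedness directly as the paper does.
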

\begin{proof}
Assume that $F_{A}\tilde{\cup} F_{A}^{'}$ is not a  closed soft set. 
Then there exists a soft point $x_{A}$ such that $x_{A}\tilde{\in} {\rm cl} (F_{A}\tilde{\cup}F_{A}^{'})$ 
but $x_{A}\tilde{\notin} F_{A}\tilde{\cup}F_{A}^{'}.$ Then $x_{A} \tilde{\notin} F_{A}, x_{A} \tilde{\notin} F_{A}^{'} .$ 
 Hence there exists a soft remote neighborhood $M_{A}$ of  a soft point $x_{A}$ such that $(M_{A}\tilde{\cup}x_{A})\tilde{\cup} F_{A}^{c}=\tilde{U_{A}}.$ 
Since $x_{A} \tilde{\notin} F_{A}$ then $M_{A}\tilde{\cup} F_{A}^{c}=\tilde{U_{A}}$ and hence $x_{A}\tilde{\notin} {\rm cl}F_{A}.$ 
Therefore $x_{A}\tilde{\notin} {\rm cl}(F_{A}\tilde{\cup} F_{A}^{'}).$ This is a contradiction.
\end{proof}
\begin{theorem} \label{accumulation}
Let $(\tilde{U_{E}},\kappa)$ be a soft cotopological space and $F_{A} \tilde{\subseteq} \tilde{U_{E}}.$ 
Then $ {\rm cl} F_{A}=F_{A}\tilde{\cup} F_{A}^{'}.$ 
\end{theorem}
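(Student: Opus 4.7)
The plan is to exploit the two results immediately preceding the statement, so that the proof essentially reduces to a matching pair of one-line inclusions. No direct manipulation of soft remote neighborhoods should be needed.

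For the inclusion $F_{A}\tilde{\cup} F_{A}^{'}\tilde{\subseteq} {\rm cl} F_{A}$, I would combine two facts. First, $F_{A}\tilde{\subseteq} {\rm cl} F_{A}$ is Theorem \ref{closure1}(1). Second, the remark made just after the definition of soft accumulation point records that every soft accumulation point of $F_{A}$ is a soft adherence point of $F_{A}$, i.e. $F_{A}^{'}\tilde{\subseteq} {\rm cl} F_{A}$. Taking the union of these two containments yields the desired inclusion.

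For the reverse inclusion ${\rm cl} F_{A}\tilde{\subseteq} F_{A}\tilde{\cup} F_{A}^{'}$, I would invoke the theorem immediately preceding this one, which establishes that $F_{A}\tilde{\cup} F_{A}^{'}$ is a closed soft set. Since obviously $F_{A}\tilde{\subseteq} F_{A}\tilde{\cup} F_{A}^{'}$, and since ${\rm cl} F_{A}$ is the \emph{smallest} closed soft set containing $F_{A}$ by Theorem \ref{closure1}(2), the inclusion follows.

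Combining the two inclusions gives equality. The main point worth double-checking (and the only potential obstacle) is that the parameter sets match on the nose: both sides are governed by $A$, since $x_{A}$ ranges over soft points indexed by $A$ and the characterization in Theorem \ref{closure} gives ${\rm cl} F_{A}$ as an intersection of closed soft sets $K_{A}$ with the same index $A$. Once this bookkeeping is observed, no further calculation is required — the statement is essentially a corollary of the closedness of $F_{A}\tilde{\cup} F_{A}^{'}$ together with the minimality of ${\rm cl} F_{A}$ among closed soft supersets of $F_{A}$.
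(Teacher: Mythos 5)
Your proposal is correct and follows essentially the same route as the paper's own proof: both inclusions are obtained exactly as you describe, using $F_{A}\tilde{\subseteq}{\rm cl}F_{A}$ together with $F_{A}^{'}\tilde{\subseteq}{\rm cl}F_{A}$ for one direction, and the closedness of $F_{A}\tilde{\cup}F_{A}^{'}$ plus the minimality of the closure for the other. No gaps; your parameter-set bookkeeping remark is a reasonable extra caution but does not change the argument.
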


\begin{proof}
It is known that $F_{A}\tilde{\subseteq} {\rm cl}F_{A}$ and $F_{A}^{'}\tilde{\subseteq} {\rm cl} F_{A}$ so 
$F_{A}\tilde{\cup} F_{A}^{'}\tilde{\subseteq} {\rm cl} F_{A}.$ On the other hand,
noticing that
  $F_{A}\tilde{\subseteq} F_{A}\tilde{\cup} F_{A}^{'}$ 
and recalling that $F_{A}\tilde{\cup} F_{A}^{'}$ is a closed soft set we conclude that 
${\rm cl} F_{A}\tilde{\subseteq} F_{A}\tilde{\cup} F_{A}^{'}.$ 
Hence the proof is completed.
\end{proof} 
\noindent From Theorem \ref{accumulation} easily follows the next:
\begin{theorem}
Let $(\tilde{U_{E}},\kappa)$ be a soft cotopological space. A soft set  $F_{A} \tilde{\subseteq} \tilde{U_{E}}$ is  closed  
if and only if $F_{A}^{'}\tilde{\subseteq}F_{A}.$
\end{theorem}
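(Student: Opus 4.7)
The plan is to derive this characterization as an immediate corollary of Theorem \ref{accumulation} combined with item 3 of Theorem \ref{closure1}. Both ingredients are already in hand, so no new machinery needs to be developed; the argument reduces to a short set-theoretic manipulation of the inclusion $F_A' \tilde{\subseteq} F_A$.

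First I would handle the forward implication. Suppose $F_A$ is closed. By Theorem \ref{closure1}(3) this gives $F_A = {\rm cl} F_A$, and Theorem \ref{accumulation} rewrites the right-hand side as $F_A \tilde{\cup} F_A'$. Therefore $F_A = F_A \tilde{\cup} F_A'$, and from the defining property of union (Proposition about $F_A \tilde{\subseteq} G_B$ iff $F_A \tilde{\cup} G_B = G_B$) this forces $F_A' \tilde{\subseteq} F_A$.

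For the reverse direction, assume $F_A' \tilde{\subseteq} F_A$. Again by the same Proposition, this is equivalent to $F_A \tilde{\cup} F_A' = F_A$. Applying Theorem \ref{accumulation} to the left-hand side gives ${\rm cl} F_A = F_A$, and Theorem \ref{closure1}(3) then yields that $F_A$ is a closed soft set.

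The only subtlety worth keeping an eye on is that the equivalences from Proposition~2.11 and the characterization of closedness from Theorem \ref{closure1}(3) are stated for soft sets with the same parameter set $A$; since $F_A'$ and $F_A$ share $A$ by the definition of the accumulation (indexed over soft points $x_A$), there is no mismatch of parameter domains. Thus no real obstacle arises — the entire argument is essentially a two-line chain of substitutions using the two preceding theorems.
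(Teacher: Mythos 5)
Your proof is correct and follows exactly the route the paper intends: the paper states this theorem with the remark that it ``easily follows from Theorem \ref{accumulation}'', and your two-line chain ($F_A$ closed $\Leftrightarrow F_A = \mathrm{cl}\,F_A = F_A \tilde{\cup} F_A' \Leftrightarrow F_A' \tilde{\subseteq} F_A$) is precisely that argument, spelled out with the correct supporting references. Nothing further is needed.
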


\subsection{$\kappa$-Continuous and Closed Mappings}
\begin{definition}
Let $(\tilde{U_{E}},\kappa_{1}),(\tilde{V_{P}},\kappa_{2})$ be  soft cotopological spaces, $x_{A}\tilde{\in} \tilde{U_{E}} $. 
A function $f=(\varphi, \psi):(\tilde{U_{E}},\kappa_{1})\rightarrow(\tilde{V_{P}},\kappa_{2})$  where 
$\varphi:U\rightarrow V,$ $\psi:E\rightarrow P$ are mappings is  called  $\kappa$-continuous at $x_{A}$ if for every 
soft remote neighborhood $M_{\psi(A)}$ of $f(x_{A})$ there exists a soft remote neighborhood $N_{A}$ of $x_{A}$ 
such that $f(N_{A})\tilde{\supseteq}M_{\psi(A)}\tilde{\cap} f(\tilde{U_{E}}).$
A function $f$ is said to be $\kappa$-continuous on $\tilde{U_{E}}$ if $f$ is $\kappa$-continuous at every soft points of $\tilde{U_{E}}. $
\end{definition}
Let $(\tilde{U_{E}},\kappa_{U}),(\tilde{V_{P}},\kappa_{V})$ be soft cotopological spaces, 
$f=(\varphi, \psi):(\tilde{U_{E}},\kappa_{U})\rightarrow(\tilde{V_{P}},\kappa_{V})$ 
  be a soft mapping and let $\tilde{V_{P}}'=f(\tilde{U_{E}}).$ Further, let $\kappa_{V}'$ be the  soft cotopology on  $\tilde{V_{P}}'$ induced by the
  cotopology $\kappa_{V}$, that is $K_{A}' \in \kappa_{V}'$ iff $K_{A}' =K_{A}\tilde{\cap} {V_{P}}'$ for some $K_{A} \in \kappa_{V}.$
\begin{proposition} 
A mapping $f=(\varphi, \psi):(\tilde{U_{E}},\kappa_{U})\rightarrow(\tilde{V_{P}},\kappa_{V})$  
is $\kappa$-continuous if and only if $f=(\varphi, \psi):(\tilde{U_{E}},\kappa_{U})\rightarrow(\tilde{V_{P}}',\kappa_{V'})$ is $\kappa$-continuous.
\end{proposition}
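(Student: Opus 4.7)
The plan is to prove both implications by translating between closed soft sets in $\tilde{V_P}$ and in the subspace $\tilde{V_P}'$ via the intersection with $\tilde{V_P}' = f(\tilde{U_E})$, and then exploiting the fact that the defining inclusion for $\kappa$-continuity, $f(N_A) \tilde{\supseteq} M_{\psi(A)} \tilde{\cap} f(\tilde{U_E})$, already restricts everything to the image.

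First I would fix a soft point $x_A \tilde{\in} \tilde{U_E}$ and treat the two directions separately. For the forward direction, assume $f: (\tilde{U_E}, \kappa_U) \to (\tilde{V_P}, \kappa_V)$ is $\kappa$-continuous at $x_A$ and let $M'_{\psi(A)}$ be a soft remote neighborhood of $f(x_A)$ in $(\tilde{V_P}', \kappa_{V'})$. By definition of $\kappa_{V'}$, pick $K'_C \in \kappa_{V'}$ with $f(x_A) \tilde{\notin} K'_C \tilde{\supseteq} M'_{\psi(A)}$, and write $K'_C = K_C \tilde{\cap} \tilde{V_P}'$ for some $K_C \in \kappa_V$. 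Since $f(x_A) \tilde{\in} \tilde{V_P}'$, the condition $f(x_A) \tilde{\notin} K'_C$ forces $f(x_A) \tilde{\notin} K_C$, so $K_C$ witnesses $M'_{\psi(A)}$ (now viewed inside $\tilde{V_P}$) as a soft remote neighborhood of $f(x_A)$ in $(\tilde{V_P}, \kappa_V)$. The hypothesis yields a soft remote neighborhood $N_A$ of $x_A$ with $f(N_A) \tilde{\supseteq} M'_{\psi(A)} \tilde{\cap} f(\tilde{U_E}) = M'_{\psi(A)}$, which is precisely the $\kappa$-continuity condition for the corestricted map.

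For the reverse direction, assume $f: (\tilde{U_E}, \kappa_U) \to (\tilde{V_P}', \kappa_{V'})$ is $\kappa$-continuous at $x_A$ and take a soft remote neighborhood $M_{\psi(A)}$ of $f(x_A)$ in $(\tilde{V_P}, \kappa_V)$ witnessed by some $K_C \in \kappa_V$ with $f(x_A) \tilde{\notin} K_C \tilde{\supseteq} M_{\psi(A)}$. Set $K'_C := K_C \tilde{\cap} \tilde{V_P}' \in \kappa_{V'}$ and $M'_{\psi(A)} := M_{\psi(A)} \tilde{\cap} \tilde{V_P}'$; since $f(x_A) \tilde{\in} \tilde{V_P}'$, one still has $f(x_A) \tilde{\notin} K'_C$, and evidently $K'_C \tilde{\supseteq} M'_{\psi(A)}$, so $M'_{\psi(A)}$ is a soft remote neighborhood of $f(x_A)$ in the subspace. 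Applying the hypothesis produces a remote neighborhood $N_A$ of $x_A$ with $f(N_A) \tilde{\supseteq} M'_{\psi(A)} \tilde{\cap} f(\tilde{U_E}) = M_{\psi(A)} \tilde{\cap} f(\tilde{U_E})$, completing the proof.

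The step that requires the most care is the passage between $K_C$ and $K'_C$: one must verify both that $f(x_A) \tilde{\notin} K_C \iff f(x_A) \tilde{\notin} K_C \tilde{\cap} \tilde{V_P}'$ whenever $f(x_A) \tilde{\in} \tilde{V_P}'$ (routine at the level of soft points) and that the restriction $M \mapsto M \tilde{\cap} \tilde{V_P}'$ preserves the inclusions demanded by the remote-neighborhood definition. No genuine obstacle is expected; the whole argument is essentially the observation that soft $\kappa$-continuity is by design insensitive to enlargements of the codomain outside the image.
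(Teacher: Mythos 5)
Your proof is correct and takes essentially the same route as the paper's: both directions rest on translating a closed witness $K_{\psi(A)}$ into $K_{\psi(A)}'=K_{\psi(A)}\tilde{\cap}\tilde{V_{P}}'$ and back, using $f(x_{A})\tilde{\in}\tilde{V_{P}}'$ to see that non-membership is preserved, and on the identity $M_{\psi(A)}'\tilde{\cap}f(\tilde{U_{E}})=M_{\psi(A)}\tilde{\cap}f(\tilde{U_{E}})$. The only cosmetic difference is that you carry a general remote neighborhood together with its closed witness, whereas the paper argues directly with the witnessing closed set itself.
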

\begin{proof}
$\Rightarrow$ Let $K_{\psi(A)}'$ be a closed soft set 
such that $f(x_{A}) \tilde{\notin} K_{\psi(A)}'$.Let  $K_{\psi(A)}'=K_{\psi(A)}\tilde{\cap} {V_{P}}'.$ 
Then $f(x_{A}) \notin K_{\psi(A)}.$ Since $f$ is $\kappa$-continuous there exists a closed soft set $M_{A}$ not containing $x_{A}$ such that 
$$f(M_{A})\tilde{\supseteq} K_{\psi(A)} \tilde{\cap}f(\tilde{U_{E}}) =K_{\psi(A)}'.$$ 
Hence $f:(\tilde{U_{E}},\kappa_{U})\rightarrow(\tilde{V_{P}}',\kappa_{V'})$ is $\kappa$-continuous.\\
$\Leftarrow :$ Let $K_{\psi(A)}$ be a closed soft set such that $f(x_{A})\tilde{ \notin} K_{\psi(A)}.$ Since $K_{\psi(A)}'=K_{\psi(A)}\tilde{\cap} {V_{P}}'$, $f(x_{A}) \notin K_{\psi(A)}'\in \kappa_{V'}.$ Then there exists a closed soft set $M_{A}$ not containing $x_{A}$ such that $f(M_{A})\tilde{\supseteq} K_{\psi(A)}'=K_{\psi(A)}\tilde{\cap}f(\tilde{U_{E}}).$ 
\end{proof}
The previous proposition can be reformulated in the following way:
\begin{proposition}
A mapping $f=(\varphi, \psi):(\tilde{U_{E}},\kappa_{U})\rightarrow(\tilde{V_{P}},\kappa_{V})$   is $\kappa$-continuous 
if and only if for any soft remote neighborhood $K_{\psi(A)}'\tilde{\subseteq} f(\tilde{U_{E}})$ of $f(x_{A})$ there exists a 
soft remote neigborhood $M_{A}$ of $x_{A}$ such that $f(M_{A})\tilde{\supseteq} K_{\psi(A)}'.$ 
Moreover in this case one can assume that $f(M_{A)})=K_{\psi(A)}' .$
\end{proposition}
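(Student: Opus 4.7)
The plan is to derive the stated reformulation directly from the previous proposition, after establishing a one-to-one correspondence between soft remote neighborhoods of $f(x_A)$ in $(\tilde V_P, \kappa_V)$ that happen to lie inside $f(\tilde U_E)$ and soft remote neighborhoods of $f(x_A)$ in the induced subspace $(\tilde V_P', \kappa_{V'})$. For the ``moreover'' clause, the idea is to shrink the candidate $M_A$ in the domain down to exactly the preimage of $K_{\psi(A)}'$.

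First I would observe the following translation: a soft set $K_{\psi(A)}' \tilde\subseteq f(\tilde U_E)$ is a soft remote neighborhood of $f(x_A)$ in $\tilde V_P$ if and only if it is one in $\tilde V_P'$. Indeed, if $L_{\psi(A)} \in \kappa_V$ witnesses remoteness in $\tilde V_P$, then $L_{\psi(A)} \tilde\cap \tilde V_P' \in \kappa_{V'}$ witnesses it in $\tilde V_P'$; conversely, any $L' \in \kappa_{V'}$ has the form $L \tilde\cap \tilde V_P'$ for some $L \in \kappa_V$, and $L$ then works in $\tilde V_P$. Combined with the trivial identity $K_{\psi(A)}' \tilde\cap f(\tilde U_E) = K_{\psi(A)}'$, the condition stated in the new proposition becomes verbatim the definition of $\kappa$-continuity for $f \colon (\tilde U_E, \kappa_U) \to (\tilde V_P', \kappa_{V'})$. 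The previous proposition then delivers equivalence with $\kappa$-continuity of $f$ into $(\tilde V_P, \kappa_V)$.

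For the ``moreover'' clause, given a remote neighborhood $M_A$ of $x_A$ with $f(M_A) \tilde\supseteq K_{\psi(A)}'$, I would set
\[
M_A' \;:=\; M_A \,\tilde\cap\, f^{-1}(K_{\psi(A)}').
\]
Since $M_A' \tilde\subseteq M_A$ and $M_A$ sits inside some closed soft set not containing $x_A$, the set $M_A'$ is again a soft remote neighborhood of $x_A$. Using Definition~\ref{SoftF} together with the elementary identity $\varphi(X \cap \varphi^{-1}(Y)) = \varphi(X) \cap Y$, a short pointwise computation gives, for each $p \in \psi(A)$, the equality $f(M_A')(p) = f(M_A)(p) \cap K_{\psi(A)}'(p) = K_{\psi(A)}'(p)$, and hence $f(M_A') = K_{\psi(A)}'$.

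I expect the main obstacle to be the bookkeeping with parameter sets rather than any substantive set-theoretic work: one must verify that $f^{-1}(K_{\psi(A)}')$ is defined on $\psi^{-1}(\psi(A)) \supseteq A$ so that the intersection $M_A \,\tilde\cap\, f^{-1}(K_{\psi(A)}')$ is a well-defined soft set on $A$, and that $f(M_A')$ lives on $\psi(A)$ so as to match the domain of $K_{\psi(A)}'$. Once these indexing sanity checks are made, both the equivalence with the previous proposition and the exact equality $f(M_A') = K_{\psi(A)}'$ reduce to routine manipulations.
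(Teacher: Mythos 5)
Your proposal is correct and follows the same route the paper intends: the paper offers no written proof, presenting the statement merely as a ``reformulation'' of the preceding proposition on the induced cotopology $\kappa_{V}'$ on $f(\tilde{U_{E}})$, which is exactly the translation you make explicit. Your additional argument for the ``moreover'' clause --- shrinking $M_{A}$ to $M_{A}\tilde{\cap}f^{-1}(K_{\psi(A)}')$ and using $\varphi(X\cap\varphi^{-1}(Y))=\varphi(X)\cap Y$ together with the downward closedness of remote neighborhoods --- is sound and in fact supplies a justification that the paper omits entirely.
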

\begin{theorem} \label{cont-remote}
$f$ is $\kappa$-continuous at $x_{A}$ iff  for any soft remote neighborhood 
$K_{\psi(A)}'$ of $f(x_{A})$, $f^{-1}(K_{\psi(A)}')$ is a soft remote neighborhood of $x_{A}.$
\end{theorem}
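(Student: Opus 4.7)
The plan is to prove the two implications separately; the converse is immediate and the forward direction carries the substantive argument.

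For $(\Leftarrow)$, given any soft remote neighborhood $M_{\psi(A)}$ of $f(x_{A})$, I would set $N_{A}:=f^{-1}(M_{\psi(A)})$, which lies in $\Re_{N}(x_{A})$ by hypothesis. The containment $f(N_{A})\tilde{\supseteq}M_{\psi(A)}\tilde{\cap}f(\tilde{U_{E}})$ follows directly from Definition~\ref{SoftF}: any soft point $y\tilde{\in}M_{\psi(A)}\tilde{\cap}f(\tilde{U_{E}})$ can be written as $y=f(z)$ with $z\tilde{\in}\tilde{U_{E}}$, and $y\tilde{\in}M_{\psi(A)}$ then forces $z\tilde{\in}f^{-1}(M_{\psi(A)})=N_{A}$, so $y\tilde{\in}f(N_{A})$. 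Hence $N_{A}$ witnesses $\kappa$-continuity of $f$ at $x_{A}$ for the given $M_{\psi(A)}$.

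For $(\Rightarrow)$, I would fix $K'_{\psi(A)}\in\Re_{N}(f(x_{A}))$, extract from the definition a closed soft set $L'$ with $f(x_{A})\tilde{\notin}L'\tilde{\supseteq}K'_{\psi(A)}$, and apply $\kappa$-continuity at $x_{A}$ to the remote neighborhood $L'$ of $f(x_{A})$ to produce $N_{A}\in\Re_{N}(x_{A})$ with $f(N_{A})\tilde{\supseteq}L'\tilde{\cap}f(\tilde{U_{E}})$. A pointwise computation through Definition~\ref{SoftF} gives $x_{A}\tilde{\in}f^{-1}(L')$ iff $f(x_{A})\tilde{\in}L'$, so $x_{A}\tilde{\notin}f^{-1}(L')\tilde{\supseteq}f^{-1}(K'_{\psi(A)})$. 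The identity $f^{-1}(L')=f^{-1}(L'\tilde{\cap}f(\tilde{U_{E}}))$, combined with the monotonicity of $f^{-1}$, then yields $f^{-1}(K'_{\psi(A)})\tilde{\subseteq}f^{-1}(f(N_{A}))$.

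The crux is to extract from these pieces a single closed soft set $D$ with $x_{A}\tilde{\notin}D\tilde{\supseteq}f^{-1}(K'_{\psi(A)})$, because pointwise $\kappa$-continuity does not by itself force preimages of closed soft sets to be closed, and the closed witness of $N_{A}\in\Re_{N}(x_{A})$ covers $N_{A}$ but in general not $f^{-1}(f(N_{A}))$. I would close this gap by invoking the ``moreover'' strengthening in the Proposition immediately preceding the theorem, which lets us assume $f(N_{A})=L'\tilde{\cap}f(\tilde{U_{E}})$ exactly, or equivalently by first reducing to the subspace $(f(\tilde{U_{E}}),\kappa_{V'})$ via the earlier reduction proposition so that the intersection with $f(\tilde{U_{E}})$ becomes trivial. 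In either setting the preimage chain collapses to $f^{-1}(K'_{\psi(A)})\tilde{\subseteq}D$, and the subset-closure of $\Re_{N}(x_{A})$ established earlier in this section then upgrades $f^{-1}(K'_{\psi(A)})$ to a soft remote neighborhood of $x_{A}$, completing the argument.
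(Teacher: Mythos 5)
Your converse direction is fine and coincides with the paper's: take $N_{A}=f^{-1}(M_{\psi(A)})$ and observe $f(f^{-1}(M_{\psi(A)}))\tilde{\supseteq}M_{\psi(A)}\tilde{\cap}f(\tilde{U_{E}})$.

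The forward direction has a genuine gap, and it sits exactly at the point you yourself flag as the crux. From $f(N_{A})=L'\tilde{\cap}f(\tilde{U_{E}})$ you obtain $f^{-1}(K_{\psi(A)}')\tilde{\subseteq}f^{-1}(L')=f^{-1}(f(N_{A}))$, and from $N_{A}\in\Re_{N}(x_{A})$ you obtain a closed soft set $D$ with $x_{A}\tilde{\notin}D\tilde{\supseteq}N_{A}$. But $f^{-1}(f(N_{A}))$ is strictly larger than $N_{A}$ whenever $\varphi$ is not injective, and nothing forces it into $D$; the chain does not ``collapse'' to $f^{-1}(K_{\psi(A)}')\tilde{\subseteq}D$. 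Concretely (one parameter $e$, $\psi=\mathrm{id}$): let $U=\{1,2,3\}$, $V=\{a,b\}$, $\varphi(1)=a$, $\varphi(2)=\varphi(3)=b$, with closed soft sets generated by $\{b\}$ in $V$ and by $\{2\}$ in $U$. Then $f$ is $\kappa$-continuous at the soft point $1_{\{e\}}$: the only nontrivial remote neighborhood of $a$ is (a subset of) $\{b\}$, and the closed set $\{2\}$ avoids $1$ with $f(\{2\})=\{b\}$ exactly, so even the ``moreover'' strengthening you invoke is satisfied. Yet $f^{-1}(\{b\})=\{2,3\}$ is contained in no closed set avoiding $1$, hence is not a remote neighborhood of $1_{\{e\}}$. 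So your proposed repair cannot work as written.

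For comparison, the paper takes a different and more direct route in this direction: it argues by contradiction through complements that the closed witness $M_{A}$ (with $x_{A}\tilde{\notin}M_{A}$ and $f(M_{A})\tilde{\supseteq}K_{\psi(A)}'$) must already contain $f^{-1}(K_{\psi(A)}')$. But its step from $M_{A}\tilde{\cup}f^{-1}((K_{\psi(A)}')^{c})\neq\tilde{U_{E}}$ to $f(M_{A})\tilde{\cup}(K_{\psi(A)}')^{c}\neq\tilde{V_{P}}'$ silently uses that $u\notin M_{A}$ implies $\varphi(u)\notin\varphi(M_{A})$, i.e.\ injectivity of $\varphi$; the example above shows the implication, and indeed the theorem as stated, fails without some such hypothesis. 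You have correctly located the weak point, but neither your fix nor the paper's complement computation closes it.
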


\begin{proof}
$\Rightarrow:$ Let $K_{\psi(A)}'$ be a soft remote neighborhood of a soft point $f(x_{A})$. Without loss of generality 
we may assume that $K_{\psi(A)}'$ is an arbitrary closed soft set in $\tilde{V_{P}}'$ not containing $f(x_{A})$. 
Then there exists a closed soft set $M_{A}$ not containing $x_{A}$ such that $f(M_{A})\tilde{\supseteq} K_{\psi(A)}'.$ 
Now we shall show that $M_{A}\tilde{\supseteq} f^{-1}(K_{\psi(A)}').$ Suppose that $M_{A}\tilde{\not\supseteq} f^{-1}(K_{\psi(A)}').$ 
Then $M_{A}\tilde{\cup}( f^{-1}(K_{\psi(A)}'))^{c}\neq \tilde{U_{E}}$ and  $M_{A}\tilde{\cup} f^{-1}((K_{\psi(A)}')^{c})\neq \tilde{U_{E}}.$ 
Hence $f(M_{A})\tilde{\cup} (K_{\psi(A)}')^{c}\neq V_{P}'.$ This shows that 
$ f(M_{A})\tilde{\not\supseteq} K_{\psi(A)}'.$
The obtained contradiction  means that $M_{A}\tilde{\supseteq} f^{-1}(K_{\psi(A)}').$ 
Therefore 
$$f^{-1}(K_{\psi(A)}')=\tilde{\bigcap} \{M_{A}:x_{A}\notin f^{-1}(K_{\psi(A)}')\}$$ 
and hence $f^{-1}(K_{\psi(A)}')$ is a closed soft set 
not containing $x_{A}.$ \\
$\Leftarrow:$Let $K_{\psi(A)}'$ be a soft remote neighborhood of $f(x_{A}).$ Then by our assumption $f^{-1}(K_{\psi(A)}')$ 
is a soft remote neighborhood 
of $x_{A}$ and $f(f^{-1}(K_{\psi(A)}'))=K_{\psi(A)}'.$ Hence $f$ is $\kappa$-continuous at $x_{A}.$
\end{proof}
\begin{cor}
If $f$ is $\kappa$-continuous at $x_{A}$ then for any soft remote neighborhood $K_{\psi(A)}$ of $f(x_{A})$, the preimage $f^{-1}(K_{\psi(A)})$ 
is a soft remote neighborhood of $x_{A}.$
\end{cor}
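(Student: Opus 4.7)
The plan is to derive the corollary as a straightforward consequence of Theorem \ref{cont-remote} together with the subspace-reduction proposition proved immediately before it. Theorem \ref{cont-remote} is phrased for soft remote neighborhoods $K_{\psi(A)}'$ of $f(x_{A})$ inside the image $\tilde V_{P}' = f(\tilde U_{E})$, whereas the corollary concerns arbitrary soft remote neighborhoods $K_{\psi(A)}$ of $f(x_{A})$ in the whole codomain $(\tilde V_{P},\kappa_{V})$. Bridging these two formulations is essentially all that has to be done.

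First, given a soft remote neighborhood $K_{\psi(A)}$ of $f(x_{A})$ in $\tilde V_{P}$, I would set $K_{\psi(A)}' := K_{\psi(A)}\tilde{\cap} f(\tilde U_{E})$. Choosing a witness $L \in \kappa_{V}$ with $f(x_{A})\tilde{\notin} L\tilde{\supseteq}K_{\psi(A)}$, the soft set $L' := L\tilde{\cap} f(\tilde U_{E})$ belongs to the induced cotopology $\kappa_{V'}$, still misses $f(x_{A})$, and contains $K_{\psi(A)}'$. Hence $K_{\psi(A)}'$ is a soft remote neighborhood of $f(x_{A})$ with respect to $\kappa_{V'}$.

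Second, by the preceding proposition, $\kappa$-continuity of $f$ into $(\tilde V_{P},\kappa_{V})$ at $x_{A}$ is equivalent to $\kappa$-continuity of $f$ into $(\tilde V_{P}',\kappa_{V'})$ at $x_{A}$. Applying Theorem \ref{cont-remote} in the latter form, I conclude that $f^{-1}(K_{\psi(A)}')$ is a soft remote neighborhood of $x_{A}$. Finally, I would verify the identity $f^{-1}(K_{\psi(A)}') = f^{-1}(K_{\psi(A)})$ directly from Definition \ref{SoftF}: for $e\in\psi^{-1}(\psi(A))$, $f^{-1}(K_{\psi(A)}')(e)=\varphi^{-1}\bigl(K_{\psi(A)}(\psi(e))\cap\varphi(U)\bigr)=\varphi^{-1}(K_{\psi(A)}(\psi(e)))=f^{-1}(K_{\psi(A)})(e)$, because $\varphi^{-1}(S\cap\varphi(U))=\varphi^{-1}(S)$ for every $S\subseteq V$. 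The inclusion witnessing the soft remote neighborhood property then transfers from $K_{\psi(A)}'$ to $K_{\psi(A)}$.

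I do not foresee any real obstacle: the entire content has already been absorbed into Theorem \ref{cont-remote} and the image-restriction proposition, and the only new ingredient is the transparent set-theoretic identity $\varphi^{-1}(S\cap\varphi(U))=\varphi^{-1}(S)$. The mildly delicate point, if any, is being careful about parameter sets when intersecting with $f(\tilde U_{E})$, so that the domain $\psi(A)$ of $K_{\psi(A)}'$ is still the right one; but this is handled automatically by Definitions 2.3 and \ref{SoftF}.
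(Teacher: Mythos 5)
Your argument is correct and is essentially the derivation the paper intends: the corollary is stated without proof as an immediate consequence of Theorem~\ref{cont-remote}, and your reduction of an arbitrary remote neighborhood $K_{\psi(A)}$ to $K_{\psi(A)}'=K_{\psi(A)}\tilde{\cap}f(\tilde{U_{E}})$ via the image-restriction proposition, together with the identity $\varphi^{-1}(S\cap\varphi(U))=\varphi^{-1}(S)$ giving $f^{-1}(K_{\psi(A)}')=f^{-1}(K_{\psi(A)})$, supplies exactly the details the paper leaves implicit.
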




\begin{example}
Let $U=\{a,c\}, V=\{1,2\}$ be the universe sets $E=\{e_{1}, e_{2}\}, P=\{p_{1}, p_{2}\}$ be the parameter sets
and $f=(\varphi, \psi): (\tilde{U_{E}}, \kappa_{1})\rightarrow (\tilde{V_{P}}, \kappa_{2})$ 
be the soft function where $\varphi(a)=1,\varphi(c)=2, \psi(e_{1})=\psi(e_{2})=p_{2}.$ 
$\kappa_{1}=\{\phi_{A},\tilde{U_{E}}, \{e_{1}=\{c\}, e_{2}=\{c\}\}\},$ 
$\kappa_{2}=\{\phi_{B},\tilde{V_{P}}, \{p_{1}=\{1,2\}, p_{2}=\{2\}\}\}.$ 
Then the soft function $f$ is $\kappa$-continuous  on $\tilde{U_{E}}.$

\end{example}
\begin{lemma}
A soft set $F_{A} \tilde{\subseteq} \tilde{U_{E}}$ in a soft cotopological space $(\tilde{U_{E}},\kappa)$
 is closed  if and only if $F_{A}$ is a soft remote neighborhood of every soft point not belonging to $F_{A}.$ 
\end{lemma}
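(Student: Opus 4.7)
The plan is to prove the two directions separately; both are direct once the definitions are unpacked, and I do not foresee a serious obstacle beyond keeping the parameter sets straight.

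For the necessity ($\Rightarrow$), suppose $F_{A}$ is closed and $x_{A}$ is any soft point with $x_{A}\tilde{\notin} F_{A}$. Since $F_{A}$ is itself a closed soft set and trivially $x_{A}\tilde{\notin}F_{A}\tilde{\supseteq}F_{A}$, the definition of a soft remote neighborhood is satisfied with the witnessing closed soft set $K_{C}:=F_{A}$. Hence $F_{A}$ is a soft remote neighborhood of $x_{A}$.

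For the sufficiency ($\Leftarrow$), I would combine the hypothesis with the closure characterisation. By Theorem \ref{closure1}(3), $F_{A}$ is closed iff $F_{A}={\rm cl}F_{A}$, and since $F_{A}\tilde{\subseteq}{\rm cl}F_{A}$ always holds, it remains to verify ${\rm cl}F_{A}\tilde{\subseteq}F_{A}$. Suppose, for contradiction, that there exists a soft point $x_{A}\tilde{\in}{\rm cl}F_{A}$ with $x_{A}\tilde{\notin}F_{A}$. By the standing hypothesis, $F_{A}$ is itself a soft remote neighborhood of $x_{A}$; crucially, $F_{A}$ has parameter set $A$, which coincides with the parameter set of $x_{A}$, so $F_{A}$ is admissible as the soft remote neighborhood $M_{A}$ appearing in the definition of a soft adherence point. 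That definition forces $M_{A}\tilde{\cup}F_{A}^{c}\neq\tilde{U_{A}}$; but with $M_{A}=F_{A}$ we have $F_{A}\tilde{\cup}F_{A}^{c}=\tilde{U_{A}}$ (the complementation identity in Section~2), a contradiction. Hence every soft point in ${\rm cl}F_{A}$ lies in $F_{A}$, so $F_{A}={\rm cl}F_{A}$ and $F_{A}$ is closed.

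The one delicate point, and the only thing I would flag as a possible obstacle, is the parameter-set mismatch lurking in the definitions: a soft remote neighborhood is in general allowed to live over an arbitrary parameter set $B$, whereas the definition of a soft adherence point only tests against remote neighborhoods $M_{A}$ sharing the parameter set of the soft point $x_{A}$. In our backward step we are fortunate that $F_{A}$ automatically has the required parameter set $A$, so no restriction or extension is needed; otherwise a short auxiliary step restricting the witnessing closed set to $A$ would have to be inserted. As it stands, both implications collapse to one-line verifications grounded in Theorem \ref{closure1} and the elementary identity $F_{A}\tilde{\cup}F_{A}^{c}=\tilde{U_{A}}$.
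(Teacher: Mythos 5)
The paper states this lemma without supplying any proof, so there is nothing to compare against line by line; judged on its own, your argument is correct and is exactly the proof the authors presumably had in mind. The forward direction is the one-liner you give: a closed $F_{A}$ witnesses its own remote-neighborhoodhood via $K_{C}:=F_{A}$. Your backward direction routes through Theorem~\ref{closure1}(3) and the adherence-point definition, using $M_{A}=F_{A}$ together with $F_{A}\tilde{\cup}F_{A}^{c}=\tilde{U_{A}}$ to exclude every exterior point from ${\rm cl}F_{A}$; this is the same machinery the paper itself deploys in the proof of Theorem~\ref{closure}, so it is entirely in the spirit of the text. Your remark about the parameter-set match is well placed: the definition of a soft adherence point only quantifies over remote neighborhoods $M_{A}$ carrying the parameter set of $x_{A}$, and $F_{A}$ happens to qualify. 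The only caveat worth recording is that your final step, passing from ``every soft point of ${\rm cl}F_{A}$ lies in $F_{A}$'' to ``${\rm cl}F_{A}\tilde{\subseteq}F_{A}$,'' is legitimate only because the paper \emph{defines} ${\rm cl}F_{A}$ as the family of its soft adherence points; a general soft set $G_{A}$ is not determined by the soft points $z_{A}\tilde{\in}G_{A}$ it contains (those form $\bigcap_{e\in A}G_{A}(e)$ in each coordinate), so if one instead took the intersection formula of Theorem~\ref{closure} as the definition, this pointwise inference would need justification. Since you work with the paper's own definition, no repair is needed.
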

\begin{theorem} \label{kappacont}
A soft function $f=(\varphi, \psi ):(\tilde{U_{E}},\kappa_{U})\rightarrow(\tilde{V_{P}},\kappa_{V})$  
is $\kappa$-continuous if and only if the preimage of every closed soft set of $\kappa_{V}'$ is a closed soft set of $\kappa_{U}.$
\end{theorem}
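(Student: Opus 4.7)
The plan is to prove the equivalence by reducing it to the pointwise characterization given in Theorem \ref{cont-remote} together with the lemma that characterizes closed soft sets as those which are soft remote neighborhoods of every soft point not belonging to them. Working through the codomain $\tilde{V_P}'=f(\tilde{U_E})$ equipped with the induced cotopology $\kappa_V'$ is crucial, because the definition of $\kappa$-continuity is stated in terms of remote neighborhoods of $f(x_A)$, which is always a soft point of $\tilde{V_P}'$; the Proposition preceding Theorem \ref{cont-remote} tells us that we may freely pass between $\kappa_V$ and $\kappa_V'$.

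For the forward direction, I would assume $f$ is $\kappa$-continuous and fix a closed soft set $K_{\psi(A)}'\in\kappa_V'$. Then for any soft point $x_A$ with $x_A\tilde{\notin} f^{-1}(K_{\psi(A)}')$ we have $f(x_A)\tilde{\notin} K_{\psi(A)}'$, so by the Lemma, $K_{\psi(A)}'$ is a soft remote neighborhood of $f(x_A)$. Applying Theorem \ref{cont-remote}, $f^{-1}(K_{\psi(A)}')$ is a soft remote neighborhood of $x_A$. Since this holds for every $x_A$ outside $f^{-1}(K_{\psi(A)}')$, the Lemma gives that $f^{-1}(K_{\psi(A)}')$ is a closed soft set in $\kappa_U$.

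For the converse, I would fix a soft point $x_A\tilde{\in}\tilde{U_E}$ and a soft remote neighborhood $M_{\psi(A)}'$ of $f(x_A)$ in $\tilde{V_P}'$. By definition there is a closed soft set $K_{\psi(A)}'\in\kappa_V'$ with $f(x_A)\tilde{\notin} K_{\psi(A)}'\tilde{\supseteq} M_{\psi(A)}'$. By hypothesis $f^{-1}(K_{\psi(A)}')\in\kappa_U$, and since $f(x_A)\tilde{\notin}K_{\psi(A)}'$ we get $x_A\tilde{\notin} f^{-1}(K_{\psi(A)}')$, so $f^{-1}(K_{\psi(A)}')$ is a soft remote neighborhood of $x_A$. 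Using the monotonicity of soft remote neighborhoods (item (2) of the earlier theorem) together with $f^{-1}(M_{\psi(A)}')\tilde{\subseteq} f^{-1}(K_{\psi(A)}')$ shows that $f^{-1}(M_{\psi(A)}')$ is a soft remote neighborhood of $x_A$. Theorem \ref{cont-remote} then gives $\kappa$-continuity at $x_A$, and hence on all of $\tilde{U_E}$.

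The only real obstacle I anticipate is a bookkeeping one: carefully staying inside $\tilde{V_P}'=f(\tilde{U_E})$ so that the relationship $f(f^{-1}(K_{\psi(A)}'))=K_{\psi(A)}'$ holds (this is why the Proposition that reduces $\kappa$-continuity to the restricted codomain $\tilde{V_P}'$ is invoked), and making sure that the passage between ``closed soft set of $\kappa_V'$'' and ``soft remote neighborhood of $f(x_A)$'' is used symmetrically in the two directions via the Lemma. Once those identifications are in place, the proof is essentially an application of Theorem \ref{cont-remote} in each direction.
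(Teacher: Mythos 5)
Your proposal is correct and follows essentially the same route as the paper: both directions reduce to the pointwise characterization of $\kappa$-continuity via remote neighborhoods (Theorem \ref{cont-remote}) combined with the Lemma that a soft set is closed iff it is a soft remote neighborhood of every soft point not belonging to it, all carried out in the induced cotopology $\kappa_V'$ on $f(\tilde{U_E})$. Your version actually spells out the forward direction more explicitly than the paper (which merely refers back to the proof of Theorem \ref{cont-remote}), but the underlying argument is the same.
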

\begin{proof}
Notice first that conditions:
\begin{itemize}
\item the preimage of every closed soft set of $\kappa_{V}'$ is a closed soft set of $\kappa_{U}$;
\item the preimage of every closed soft set of $\kappa_{V}$ is a closed soft set of $\kappa_{U}$
\end{itemize}
are equivalent.\\
$\Rightarrow:$ The proof follows from the proof of the first part of Theorem \ref{cont-remote} taking into account that 
obviously a soft set $F_{A}$ is closed if and only if it is a remote neighborhood for each soft point $x_A$ not belonging to it.\\
$\Leftarrow:$ Let $K_{\psi(A)}'$ be 
a soft closed set of $\kappa_{V}'$ not containing $f(x_{A}).$ 
Then $f^{-1}(K_{\psi(A)}')$ is a soft remote neighborhood of $x_{A}.$ Hence $f(f^{-1}(K_{\psi(A)}')=K_{\psi(A)}'.$ 
This shows that $f$ is $\kappa$-continuous.
\end{proof} 


\begin{theorem}
Let $(\tilde{U_{E}},\kappa_{1}),(\tilde{V_{P}},\kappa_{2}),(\tilde{W_{R}},\kappa_{3})$ be  soft cotopological spaces 
and $f=(\varphi_{1}, \psi_{1}):(\tilde{U_{E}},\kappa_{1})\rightarrow(\tilde{V_{P}},\kappa_{2}),$ 
$g=(\varphi_{2},\psi_{2}):(\tilde{V_{P}},\kappa_{2})\rightarrow(\tilde{W_{R}},\kappa_{3})$ be soft functions 
where $\varphi_{1}: U\rightarrow V, \psi_{1}: E\rightarrow P$ and $\varphi_{2}: V\rightarrow W, \psi_{2}: P\rightarrow R$.
If $f,g$ are $\kappa$-continuous then $g\circ f=(\varphi_{1}\circ \varphi_{2},\psi_{1}\circ \psi_{2}):(\tilde{U_{E}},\kappa_{1})\rightarrow (\tilde{W_{R}},\kappa_{3})$ is $\kappa$-continuous, too.
\end{theorem}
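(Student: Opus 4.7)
The plan is to reduce the claim to the characterization of $\kappa$-continuity via preimages of closed soft sets, namely Theorem~\ref{kappacont}, so as to avoid the more cumbersome direct verification at the level of remote neighborhoods. Since that theorem says $f$ is $\kappa$-continuous iff the preimage of every closed soft set of $\kappa_{V}$ (equivalently of the relative cotopology $\kappa_{V}'$) is closed in $\kappa_{U}$, the composition statement reduces to the elementary set-theoretic identity $(g\circ f)^{-1}(K_C)=f^{-1}(g^{-1}(K_C))$ for any soft set $K_C\tilde{\subseteq}\tilde{W_{R}}$.

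Concretely, I would proceed as follows. First, let $K_{C}\in\kappa_{3}$ be an arbitrary closed soft set in $(\tilde{W_{R}},\kappa_{3})$. By Theorem~\ref{kappacont} applied to the $\kappa$-continuous soft function $g$, the soft set $g^{-1}(K_{C})$ is closed in $(\tilde{V_{P}},\kappa_{2})$. Next, applying Theorem~\ref{kappacont} a second time to the $\kappa$-continuous soft function $f$, we conclude that $f^{-1}(g^{-1}(K_{C}))$ is closed in $(\tilde{U_{E}},\kappa_{1})$. Using the preimage-of-preimage identity for soft functions (which follows directly from Definition~\ref{SoftF} by unwinding the defining formulas $\varphi^{-1}\circ\varphi^{-1}=(\varphi\circ\varphi)^{-1}$ and the analogous relation for $\psi$), this soft set coincides with $(g\circ f)^{-1}(K_{C})$. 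Hence the preimage of every closed soft set under $g\circ f$ is closed, and a final appeal to Theorem~\ref{kappacont} gives $\kappa$-continuity of $g\circ f$.

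The only step that requires more than a routine invocation of earlier results is verifying the identity $(g\circ f)^{-1}(K_{C})=f^{-1}(g^{-1}(K_{C}))$ in the soft setting, because one must track both the universe component $\varphi_i$ and the parameter component $\psi_i$, and be careful with the domains of definition $\psi_{i}^{-1}(\cdot)$. I expect this to be the main, though still elementary, obstacle: one must check that for every $e\in\psi_{1}^{-1}(\psi_{2}^{-1}(C))$ the two soft sets evaluate to the same subset of $U$, which follows directly from $\varphi_{1}^{-1}(\varphi_{2}^{-1}(K_{C}(\psi_{2}(\psi_{1}(e)))))=(\varphi_{2}\circ\varphi_{1})^{-1}(K_{C}((\psi_{2}\circ\psi_{1})(e)))$. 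Once this bookkeeping is done, the rest of the argument is a short chain of applications of Theorem~\ref{kappacont}.
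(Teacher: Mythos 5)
Your proposal is correct and follows essentially the same route as the paper: the paper's proof also takes a closed soft set in $\kappa_{3}'$, applies Theorem~\ref{kappacont} to $g$ and then to $f$, and uses the identity $(g\circ f)^{-1}(K)=f^{-1}(g^{-1}(K))$ to conclude. Your additional care with the parameter/universe bookkeeping for the preimage identity is a reasonable elaboration of a step the paper takes for granted.
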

\begin{proof}
Let $K_{A}\in \kappa_{3}'.$ Since $g$ is $\kappa-$ continuous $g^{-1}(K_{A}')\in \kappa_{2}$ and similarly since f is $\kappa-$ continuous $(g\circ f)^{-1}(K_{A}')=f^{-1}(g^{-1}(K_{A}'))\in \kappa_{1}.$ Hence $g\circ f$ is $\kappa-$ continuous.
\end{proof}
\begin{definition}
Let $(\tilde{U_{E}},\kappa_{1}),(\tilde{V_{P}},\kappa_{2})$ be soft cotopological spaces. 
A soft function $f:(\tilde{U_{E}},\kappa_{1})\rightarrow(\tilde{V_{P}},\kappa_{2})$  is called  closed 
 if the image of each closed soft set is soft  closed.
\end{definition}

\begin{theorem}
A soft function $f=(\varphi, \psi):(\tilde{U_{E}},\kappa_{1})\rightarrow(\tilde{V_{P}},\kappa_{2})$  is  closed soft 
if and only if ${\rm cl}(f(F_{A}))\tilde{\subseteq}f({\rm cl}F_{A})$ for every soft subset $F_{A}$ of $\tilde{U_{E}},$ 
\end{theorem}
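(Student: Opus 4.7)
The plan is to prove both implications by directly invoking Theorem~\ref{closure1}, which characterizes ${\rm cl}F_{A}$ as the smallest closed soft set containing $F_{A}$ (together with the fact that $F_{A}\tilde{\subseteq}{\rm cl}F_{A}$ and that ${\rm cl}F_{A}=F_{A}$ exactly when $F_{A}$ is closed). The proof is essentially the routine transcription of the classical topological argument to the soft setting, so the main work is just bookkeeping with parameters.

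For the forward direction, I would start with an arbitrary $F_{A}\tilde{\subseteq}\tilde{U_{E}}$ and consider its closure ${\rm cl}F_{A}$, which is a closed soft set containing $F_{A}$ by Theorem~\ref{closure1}(1)--(2). Because $f$ is closed, $f({\rm cl}F_{A})$ is a closed soft set in $(\tilde{V_{P}},\kappa_{2})$. By monotonicity of images (Theorem 2.13(4) in the preliminaries) and $F_{A}\tilde{\subseteq}{\rm cl}F_{A}$, we have $f(F_{A})\tilde{\subseteq}f({\rm cl}F_{A})$. Since ${\rm cl}(f(F_{A}))$ is the smallest closed soft set containing $f(F_{A})$ (Theorem~\ref{closure1}(2)), we conclude ${\rm cl}(f(F_{A}))\tilde{\subseteq}f({\rm cl}F_{A})$.

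For the reverse direction, I would take an arbitrary closed soft set $F_{A}\in\kappa_{1}$ and show that $f(F_{A})$ is closed. By the hypothesis and Theorem~\ref{closure1}(3), ${\rm cl}(f(F_{A}))\tilde{\subseteq}f({\rm cl}F_{A})=f(F_{A})$. Combining with the general inclusion $f(F_{A})\tilde{\subseteq}{\rm cl}(f(F_{A}))$ from Theorem~\ref{closure1}(1), we obtain ${\rm cl}(f(F_{A}))=f(F_{A})$, and so $f(F_{A})$ is closed again by Theorem~\ref{closure1}(3). Hence $f$ is a closed soft function.

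The only genuine subtlety I anticipate is parameter-set bookkeeping: the image $f(F_{A})$ has parameter set $\psi(A)\subseteq P$, so ${\rm cl}(f(F_{A}))$ is computed inside $(\tilde{V_{P}},\kappa_{2})$ with respect to this parameter domain, and one must check the characterization in Theorem~\ref{closure1} applies uniformly whether the underlying parameter set is $A$ or $\psi(A)$. Because that theorem is stated for arbitrary $F_{A}\tilde{\subseteq}\tilde{U_{E}}$ with $A\subseteq E$, and the analogous statement transfers verbatim to $(\tilde{V_{P}},\kappa_{2})$, no additional argument is required beyond a sentence acknowledging this. Accordingly I do not expect any substantive obstacle; the proof is a two-line consequence of the smallest-closed-set characterization of the closure operator.
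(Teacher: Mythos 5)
Your proposal is correct and follows essentially the same route as the paper: both directions reduce to the smallest-closed-set characterization of the closure operator, with the forward direction using that $f$ maps the closed set ${\rm cl}F_{A}$ to a closed set containing $f(F_{A})$, and the reverse direction using ${\rm cl}F_{A}=F_{A}$ for closed $F_{A}$ to conclude ${\rm cl}(f(F_{A}))=f(F_{A})$. You merely spell out the monotonicity and minimality steps that the paper's rather terse forward direction leaves implicit.
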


\begin{proof}
$\Rightarrow:$ Let $F_{A}$ be a soft subset of $\tilde{U_{E}}$. It is known that ${\rm cl}{F_{A}}$ is a closed soft set and hence ${\rm cl}(f(F_{A}))\tilde{\subseteq}f({\rm cl}F_{A}).$\\
$\Leftarrow:$ Let $K_{A}$ be a  closed  soft set. By the hypothesis ${\rm cl}(f(K_{A}))\tilde{\subseteq}f({\rm cl}K_{A})=f(K_{A}).$ This shows that $f$ 
is a closed soft function.
\end{proof}

\subsection{$\kappa$-Soft Separation Axioms}
\begin{definition}
A soft cotopological space $(\tilde{U_{E}},\kappa)$  is called a $\kappa$-soft $T_{0}$-space 
if for any different soft points $x_{A}, y_{A}$ there exists $M_{A}\in \Re_{N}(x_{A})$ such that $y_{A}\tilde{\in} M_{A}$ 
or there exists $M_{A}\in \Re_{N}(y_{A})$ such that $x_{A}\tilde{\in} M_{A}.$
\end{definition}
\begin{theorem}
A soft cotopological space $(\tilde{U_{E}},\kappa)$  is  a $\kappa$-soft 
$T_{0}$-space if and only if ${\rm cl}x_{A}\neq {\rm cl}y_{A}$ whenever $x\neq y.$
\end{theorem}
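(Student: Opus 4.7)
The plan is to prove both implications by directly manipulating the closure operator, exploiting Theorem \ref{closure1}, which identifies ${\rm cl}\,F_A$ as the smallest closed soft set containing $F_A$ and guarantees $F_A \tilde\subseteq {\rm cl}\,F_A$.

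For the forward direction, assume $(\tilde{U_E},\kappa)$ is a $\kappa$-soft $T_0$-space and let $x\neq y$ in $U$. By definition, there exists, say, $M_A \in \Re_N(x_A)$ with $y_A \tilde\in M_A$ (the other case is symmetric). Unfolding the definition of remote neighborhood, choose a closed soft set $K_C$ with $x_A \tilde\notin K_C$ and $M_A \tilde\subseteq K_C$; then $y_A \tilde\in K_C$. First I would argue that the restriction $K_A := K_C \tilde\cap \tilde{U_A}$ is still a closed soft set on $A$ separating the two points, so that by Theorem \ref{closure1}(2) we have ${\rm cl}\,y_A \tilde\subseteq K_A$, whence $x_A \tilde\notin {\rm cl}\,y_A$. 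Combined with $x_A \tilde\in {\rm cl}\,x_A$, this yields ${\rm cl}\,x_A \neq {\rm cl}\,y_A$.

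For the converse, the key lemma is the \emph{symmetry dichotomy}: if ${\rm cl}\,x_A = {\rm cl}\,y_A$ then both $x_A \tilde\in {\rm cl}\,y_A$ and $y_A \tilde\in {\rm cl}\,x_A$; conversely, if both of these hold then, since ${\rm cl}\,y_A$ is a closed soft set containing $x_A$, the minimality from Theorem \ref{closure1}(2) forces ${\rm cl}\,x_A \tilde\subseteq {\rm cl}\,y_A$, and symmetrically ${\rm cl}\,y_A \tilde\subseteq {\rm cl}\,x_A$, so the two closures coincide. Contrapositively, from ${\rm cl}\,x_A \neq {\rm cl}\,y_A$ we may assume (without loss of generality) that $x_A \tilde\notin {\rm cl}\,y_A$. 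Then ${\rm cl}\,y_A$ is a closed soft set not containing $x_A$, so by the very definition of remote neighborhood it is an element of $\Re_N(x_A)$; and by Theorem \ref{closure1}(1) it contains $y_A$. This is precisely the $\kappa$-soft $T_0$ condition.

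The only step that requires any real care is the parameter-set bookkeeping: the remote neighborhood in the $T_0$ definition is a soft set over $A$, while the closure notation ${\rm cl}\,x_A$ and ${\rm cl}\,y_A$ produced in Theorem \ref{closure} is also indexed by $A$, so the two sides match on the nose. The potential obstacle would be if a witnessing $K_C$ in the forward direction had $C \neq A$; this is handled by intersecting with $\tilde{U_A}$, which preserves closedness (as intersection of closed soft sets) and preserves the separation $x_A \tilde\notin \,\cdot\,\tilde\ni y_A$. Once this is observed, both directions are short and the proof reduces to applying Theorem \ref{closure1}.
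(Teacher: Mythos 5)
Your overall strategy is sound and it is a genuinely different route from the paper's. The paper argues both directions by contradiction, working entirely with the adherence-point definition of closure (the condition $M_{A}\tilde{\cup}y_{A}^{c}\neq \tilde{U_{A}}$); in the converse direction it fixes an arbitrary $z_{A}\tilde{\in}{\rm cl}\,x_{A}$ and chases remote neighborhoods of $z_{A}$ to obtain mutual containment of the two closures. You instead reduce everything to the characterization of ${\rm cl}$ as the smallest closed superset (Theorem \ref{closure1}): your ``symmetry dichotomy'' (mutual adherence forces equal closures via monotonicity and idempotence) packages the converse cleanly, and your observation that ${\rm cl}\,y_{A}$ itself serves as the required remote neighborhood of $x_{A}$ is simpler and more transparent than the paper's $z_{A}$ argument, which as written quietly needs that same observation in order to transfer its hypothesis from $\Re_{N}(x_{A})$ to $\Re_{N}(z_{A})$.

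The one step that does not follow from the axioms as stated is the claim that $K_{A}:=K_{C}\tilde{\cap}\tilde{U_{A}}$ is closed. The cotopology axioms place $\phi_{A}$ and $\tilde{U_{E}}$ in $\kappa$, but not $\tilde{U_{A}}$, and nothing guarantees that restricting a closed soft set to a smaller parameter set yields a closed soft set; so the ``bookkeeping'' repair you propose is itself unjustified. Fortunately you do not need it: since $M_{A}\in\Re_{N}(x_{A})$ and $y_{A}\tilde{\in}M_{A}$, one has $M_{A}\tilde{\cup}y_{A}^{c}=\tilde{U_{A}}$ directly, so $x_{A}$ fails to be a soft adherence point of $y_{A}$, i.e.\ $x_{A}\tilde{\notin}{\rm cl}\,y_{A}$; combined with $x_{A}\tilde{\in}{\rm cl}\,x_{A}$ this finishes the forward direction without ever touching the witnessing closed set $K_{C}$. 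With that substitution your proof is complete and, in my view, tighter than the one in the paper.
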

\begin{proof}
$\Rightarrow:$ Let $(\tilde{U_{E}},\kappa)$ be a $\kappa$-soft $T_{0}$-space. Suppose that ${\rm cl}x_{A}= {\rm cl}y_{A}$ for some $x\not = y$ 
Then $x_{A}\tilde{\in} {\rm cl}x_{A}={\rm cl}y_{A},$ $y_{A}\tilde{\in}{\rm cl}y_{A}={\rm cl}x_{A}.$ 
Since $x_{A}\tilde{\in} {\rm cl}y_{A},$ we have $M_{A}\tilde{\cup}y_{A}^{c}\neq \tilde{U_{A}}$ 
for any soft remote neighborhood $M_{A}$ of $x_{A}.$ 
Then there exists $e\tilde{\in} A$ such that $M_{A}(e)\cup(U \setminus \ y_{A}(e))\neq U.$ Hence $y\notin M_{A}(e).$ This shows that $y_{A}\tilde{\notin}M_{A}.$ 
We can show that $x_{A}\tilde{\notin}M_{A}$ by a similar way. Hence $(\tilde{U_{E}},\kappa)$ is not a $\kappa$-soft $T_{0}$- space. 
This is a contradiction.\\

$\Leftarrow:$ Assume that $(\tilde{U_{E}},\kappa)$ is not a $\kappa$-soft $T_0$-space Then there exist points $x,y \in U$ such that
$y_A \not \in M_A$ for each
$M_A \in \Re_{N}(x_A)$ and $x_A \not \in M_A$ for each
$M_A \in \Re_{N}(y_A)$. We show that ${\rm cl}x_{A}\neq {\rm cl} y_{A}$ in this case. Indeed, let $z_A \in {\rm cl}x_{A}$. Then 
$M_{A}\tilde{\cup} x_{A}^{c}\neq \tilde{U_{A}}$ for every  $M_A \in \Re_{N}(z_A)$ Hence $x_{A}\tilde{\notin}M_{A}$ 
and by our assumption $y_{A}\tilde{\notin}M_{A}.$ 
Therefore $M_{A}\tilde{\cup} y_{A}^{c} \neq \tilde {U_{A}}.$ This means that $z_{A}\tilde{\in}{\rm cl}y_{A}.$ 
Hence ${\rm cl}x_{A}\tilde{\subseteq}{\rm cl}y_{A}.$ It can be proved that ${\rm cl}y_{A}\tilde{\subseteq}{\rm cl} x_{A}.$ 
This is a contradiction.
\end{proof}

\begin{definition}
A soft cotopological space $(\tilde{U_{E}},\kappa)$  is called a $\kappa$-soft $T_{1}$-space if for any different 
soft points $x_{A}, y_{A} (\forall x,y \in U )$ there exist $M_{A}\in \Re_{N}(x_{A})$ such that $y_{A}\tilde{\in} M_{A}$ 
and $N_{A}\in \Re_{N}(y_{A})$ such that $x_{A}\tilde{\in} N_{A}.$
\end{definition}
\begin{theorem} \label{cotopT1}
A soft cotopological space. $(\tilde{U_{E}},\kappa)$ is a $\kappa$-soft $T_{1}$-space if and only if 
every soft point $x_{A}(x\in U, A\subseteq E)$ is a  closed soft set.
\end{theorem}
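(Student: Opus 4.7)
The plan is to prove both directions via the closure operator, bridging separation at a point and closedness of that point.

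For the easy direction ($\Leftarrow$), suppose every soft point is a closed soft set. Given two distinct soft points $x_A, y_A$ (so $x\neq y$), the soft point $y_A$ is itself closed by hypothesis and $x_A\tilde{\notin}y_A$ because $x\notin\{y\}=y_A(e)$ for every $e\in A$. Hence taking $K_C=M_B=y_A$ in the definition of a soft remote neighborhood shows $y_A\in\Re_N(x_A)$, and trivially $y_A\tilde{\in}y_A$. Symmetrically $x_A\in\Re_N(y_A)$ with $x_A\tilde{\in}x_A$, so the $\kappa$-soft $T_1$ axiom holds.

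For the forward direction ($\Rightarrow$), fix a soft point $x_A$; I aim to show ${\rm cl}\,x_A=x_A$, which by Theorem \ref{closure1}(3) yields that $x_A$ is closed. One inclusion $x_A\tilde{\subseteq}{\rm cl}\,x_A$ is Theorem \ref{closure1}(1). For the reverse inclusion, since ${\rm cl}\,x_A$ is the family of all soft adherence points of $x_A$, it suffices to show that no soft point $y_A$ with $y\neq x$ is a soft adherence point of $x_A$. Given such $y_A$, the $T_1$ axiom supplies $N_A\in\Re_N(y_A)$ with $x_A\tilde{\in}N_A$, i.e., $x\in N_A(e)$ for every $e\in A$. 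Then $N_A(e)\cup x_A^c(e)=N_A(e)\cup(U\setminus\{x\})=U$ for every $e\in A$, so $N_A\tilde{\cup}x_A^c=\tilde{U_A}$, which by definition means $y_A$ is not a soft adherence point of $x_A$.

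I expect the main obstacle to be pinpointing the correct translation between the pointwise membership ``$x_A\tilde{\in}N_A$'' and the covering equation ``$N_A\tilde{\cup}x_A^c=\tilde{U_A}$'' that appears in the adherence-point definition. The soft remote neighborhood $N_A$ may in principle come from a closed superset with a larger parameter set, but after unwinding the definitions the argument reduces to a transparent pointwise check on $A$, after which both directions follow cleanly from the closure characterization.
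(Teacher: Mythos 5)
Your proof is correct and follows essentially the same route as the paper's: the backward direction uses $y_A$ itself as the closed remote neighborhood, and the forward direction shows ${\rm cl}\,x_A=x_A$ by observing that $x_A\tilde{\in}N_A$ forces $N_A\tilde{\cup}x_A^c=\tilde{U_A}$ (the paper phrases this same step contrapositively, as a proof by contradiction). The only difference is presentational, and your explicit pointwise verification of the covering equation is a slight improvement in clarity.
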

\begin{proof}
$\Rightarrow:$ Suppose that $x_{A}\neq {\rm cl}x_{A}.$ Then there exists a soft point $z_{A}\tilde{\in}{\rm cl}x_{A}$ 
and $z_{A}\tilde{\notin}x_{A}.$ Hence $z_{A}\neq x_{A}.$ Since $z_{A}\tilde{\in}{\rm cl}x_{A},$ 
for any soft remote neighborhood $M_{A}$ of $z_{A},$ $M_{A}\tilde{\cup}x_{A}^{c}\neq \tilde{U_{A}}.$ Hence $x_{A}\tilde{\notin}M_{A}.$ 
This is a contradiction since $(\tilde{U_{E}},\kappa)$ is a $\kappa$-soft $T_{1}$-space.\\
$\Leftarrow:$ Let $x_{A}$ be a  closed soft set. Then $x_{A}={\rm cl}x_{A}.$ 
For a different soft point $y_{A},$ $y_{A}\tilde{\notin}{\rm cl}x_{A}=x_{A}.$ 
Hence $x_{A}$ is a soft remote neighborhood of $y_{A}$ containing $x_{A}.$ 
We can prove it for $y_{A}$ similarly. Hence $(\tilde{U_{E}},\kappa)$ is a $\kappa$-soft $T_{1}$-space.
\end{proof}

Every $\kappa$-soft $T_{1}$-space is a $\kappa$-soft $T_{0}$-space. But the converse is not true generally as shown the following example.
\begin{example}
Let $U$ be the set of all real numbers, $E$ be the set of natural numbers and 
$K_{E_{\lambda}}=\{(e,[e+\lambda,\infty[): e\in E_{\lambda}, \lambda \in \mathbb{N}\}$ 
and $\kappa=\{(K_{E})_{\lambda}\tilde{\subseteq}\tilde{U_{E}}\}\cup \{\phi_{A}, \tilde{U_{E}}\}.$ 
Then $(\tilde{U_{E}},\kappa)$ is a $\kappa$-soft $T_{0}$-space but it is not a $\kappa$-soft $T_{1}$-space.
\end{example}
To define separation properties of $T_2$, regularity and normality type in an appropriate way we have to apply a stronger version of 
a soft remote neighborhood introduced in the next definition:

\begin{definition}
Let $(\tilde{U_{E}},\kappa)$ be a soft cotopological space. $S_{B}\tilde{\subseteq} \tilde{U_{E}}$ is called 
a soft strong remote neighborhood of 
$x_{A} \tilde{\in} \tilde{U_{E}}$  if there exists a closed soft set $K_{C}$ 
such that $x\notin K_{C}(e)\supseteq S_{B}(e)$ for every $e\in C$. \\
$S_{B}\tilde{\subseteq} \tilde{U_{E}}$ 
is called a soft strong remote neighborhood of a soft set $F_{A}$ if there exists a closed soft set $K_{C}(e)\supseteq S_{B}(e)$ 
such that for every $e\in A,$ a set $F_{A}(e)$ 
is not a subset of $ K_{C}(e)$.
\end{definition}

\begin{definition}
A soft cotopological space $(\tilde{U_{E}},\kappa)$  is called a $\kappa$-soft $T_{2}$-space if 
for any different soft points $x_{A}, y_{A}$ there exist soft strong remote neighborhoods $S_{A}, T_{A}$ of $x_{A}, y_{A}$ respectively 
such that  $S_{A}\tilde{\cup} T_{A}=\tilde{U_{A}}.$
\end{definition}
\begin{theorem}
If $(\tilde{U_{E}},\kappa)$ is a $\kappa$-soft $T_{2}$-space then 
$x_{A}=\tilde{\bigcap}\{K_{A}\tilde{\subseteq}\tilde{U_{E}}: x_{A}\tilde{\in} K_{A}\in \kappa\}.$
\end{theorem}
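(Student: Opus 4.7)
The plan is to prove the equality $x_A = \tilde{\bigcap}\,\mathcal{K}$, where $\mathcal{K} = \{K_A \tilde{\subseteq} \tilde{U_E} : x_A \tilde{\in} K_A \in \kappa\}$, as a two-sided inclusion. The inclusion $x_A \tilde{\subseteq} \tilde{\bigcap}\,\mathcal{K}$ is essentially tautological: every member of $\mathcal{K}$ contains $x_A$, so $x \in K_A(e)$ for each $e \in A$ and each $K_A \in \mathcal{K}$, hence $x \in (\tilde{\bigcap}\,\mathcal{K})(e)$ for every $e \in A$.

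For the nontrivial inclusion I would work pointwise and argue by contradiction. Fix $e_0 \in A$ and suppose some $z \in (\tilde{\bigcap}\,\mathcal{K})(e_0)$ satisfies $z \neq x$. The soft point $z_A$ then disagrees with $x_A$ at $e_0$, so $z_A \neq x_A$ and the $\kappa$-soft $T_2$ axiom applies: it produces soft strong remote neighborhoods $S_A$ of $x_A$ and $T_A$ of $z_A$ with $S_A \tilde{\cup} T_A = \tilde{U_A}$. Unfolding the definition of strong remote neighborhood yields closed soft sets $L$ and $K$ (both with parameter set $A$) such that, for every $e \in A$,
\[
L(e) \supseteq S_A(e),\quad x \notin L(e),\qquad K(e) \supseteq T_A(e),\quad z \notin K(e).
\]
Combining gives $K(e) \cup L(e) \supseteq S_A(e) \cup T_A(e) = U$, and since $x \notin L(e)$ this forces $x \in K(e)$ for every $e \in A$. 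Hence $x_A \tilde{\in} K$, so $K \in \mathcal{K}$; but then $z \in (\tilde{\bigcap}\,\mathcal{K})(e_0) \subseteq K(e_0)$, contradicting $z \notin K(e_0)$. Therefore $(\tilde{\bigcap}\,\mathcal{K})(e_0) \subseteq \{x\} = x_A(e_0)$ for every $e_0 \in A$, completing the argument.

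The main obstacle is parameter-set bookkeeping rather than any substantive mathematical difficulty: the paper's definition of a strong remote neighborhood permits the closed witness $K_C$ to have parameter set $C$ strictly smaller than $A$, in which case $x_A \tilde{\in} K_C$ is not even well-posed and the identification $K \in \mathcal{K}$ above breaks down. I would resolve this by reading the $T_2$ axiom on the common parameter set $A$ literally — so the accompanying witnesses are also taken on $A$ — which is the most natural convention and is consistent with how $\tilde{U_A}$ appears in the axiom itself. With this convention in place, all remaining steps reduce to elementary manipulations of $\tilde{\in}$, $\tilde{\cup}$ and $\tilde{\cap}$.
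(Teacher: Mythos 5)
Your argument is correct and follows essentially the same route as the paper's own proof: apply the $\kappa$-soft $T_2$ axiom to the pair $x_A$, $z_A$, unfold the strong remote neighborhoods to their closed witnesses, and use the covering condition $S_A\tilde{\cup}T_A=\tilde{U_A}$ together with $x\notin L(e)$ to conclude that the closed set $K$ contains $x$ at every parameter while excluding $z$, so $K$ belongs to the intersected family and separates $z$ from the intersection. Your explicit handling of the parameter-set bookkeeping for the closed witnesses is a point the paper silently assumes, but it does not change the substance of the argument.
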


\begin{proof}
Let $x_{A}$ be a soft point of $\tilde{U_{E}}.$ For any $y_{A}\neq x_{A}$ 
there exist  closed soft sets $K_{A},L_{A}$ such that for every $e\in A, x\notin K_{A}(e), y\notin L_{A}(e)$ and $K_{A}\tilde{\cup}L_{A}=\tilde U_{A}.$ 
Hence $x_{A}\tilde{\in}L_{A}$ and $y_{A}\tilde{\in}K_{A}.$ This shows that any closed soft set containing $x_{A}$ does not contain $y_{A}.$ 
Therefore $x_{A}=\tilde{\bigcap}\{K_{A}\tilde{\subseteq}\tilde{U_{E}}: x_{A}\tilde{\in} K_{A}\in \kappa\}.$
\end{proof}
\begin{theorem}
If $x_{A}=\tilde{\bigcap}\{K_{A}\tilde{\subseteq}\tilde{U_{E}}: x_{A}\tilde{\in} K_{A}\in \kappa\}$ then $(\tilde{U_{E}},\kappa)$ 
is a $\kappa$-soft $T_{0}$-space.
\end{theorem}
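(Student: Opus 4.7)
The plan is to unpack the hypothesis and then match it against the $\kappa$-soft $T_0$ definition by producing a suitable remote neighborhood directly. Fix two distinct soft points $x_{A}, y_{A}$ with $x\neq y$. I want to exhibit one side of the $T_0$ dichotomy: a soft remote neighborhood $M_{A}$ of $y_{A}$ satisfying $x_{A}\tilde\in M_{A}$.

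First I would invoke the hypothesis, applied to the soft point $x_{A}$, to write
\[
x_{A}=\tilde{\bigcap}\{K_{A}\tilde{\subseteq}\tilde{U_{E}}: x_{A}\tilde{\in} K_{A}\in \kappa\}.
\]
Since $y\neq x$, the soft point $y_{A}$ is not contained in the singleton soft point $x_{A}$, hence $y_{A}\tilde{\notin} x_{A}$. By the displayed equality, this forces the existence of at least one closed soft set $K_{A}\in\kappa$ with $x_{A}\tilde{\in} K_{A}$ and $y_{A}\tilde{\notin}K_{A}$ — otherwise every closed soft set containing $x_{A}$ would also contain $y_{A}$, placing $y_{A}$ inside the intersection, a contradiction.

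Next I would use this $K_{A}$ to produce a remote neighborhood of $y_{A}$. Recall that a soft set $M_{B}$ is a soft remote neighborhood of $y_{A}$ if there exists a closed soft set $K_{C}$ with $y_{A}\tilde{\notin} K_{C}\tilde{\supseteq} M_{B}$. Taking $M_{A}=K_{A}$ and $K_{C}=K_{A}$, both requirements are satisfied trivially: $y_{A}\tilde\notin K_{A}$ by construction, and $K_{A}\tilde\supseteq K_{A}$. Thus $M_{A}=K_{A}\in\Re_{N}(y_{A})$, and by construction $x_{A}\tilde{\in} M_{A}$, which is exactly one of the two alternatives in the definition of a $\kappa$-soft $T_{0}$-space.

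There is no genuine obstacle here; the entire argument is an extraction from the hypothesis plus the definition of soft remote neighborhood, and one only needs to note that one of the two $T_0$ alternatives suffices — we need not appeal to the analogous property for $y_{A}$. If one wished to make the argument fully symmetric (and thereby also produce a remote neighborhood of $x_{A}$ containing $y_{A}$) one would need to assume the displayed identity for $y_{A}$ as well, but this is not required to conclude the $T_{0}$ property.
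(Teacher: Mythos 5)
Your argument is correct: the paper states this theorem without any proof, so there is nothing to compare against, but your reasoning is the natural one and it goes through. The key step --- that $y_{A}\tilde{\notin}x_{A}=\tilde{\bigcap}\{K_{A}:x_{A}\tilde{\in}K_{A}\in\kappa\}$ forces some closed $K_{A}$ with $x_{A}\tilde{\in}K_{A}$ and $y_{A}\tilde{\notin}K_{A}$, which is then itself a soft remote neighborhood of $y_{A}$ containing $x_{A}$ --- is exactly what the definitions require, and your observation that one disjunct of the $T_{0}$ condition suffices (so the hypothesis is only needed at $x_{A}$) is a correct and worthwhile remark.
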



Every $\kappa$-soft $T_{2}$-space is a $\kappa$-soft $T_{1}$-space. But as shown by the next example 
the converse is generally not true .
\begin{example}
Let $U$ be the set of real numbers, $E=\{e_{1}, e_{2}, e_{3}\}$ be the set of parameters, $A\tilde{\subseteq} E$ and $\kappa=\{(K_{A})_{\lambda}\tilde{\subseteq} \tilde{U_{E}}\} \tilde{\cup} \{\tilde{U_{E}}\}$ where $\lambda \in \mathbb{N}$ and\\
$(K_{A})_{\lambda}=\{(e_{i},V): i\in \{1,2,3\}, e_{i}\in E \mbox{ and } V\subseteq \mathbb{R} \mbox{ is a finite set }\}.$ Then $(\tilde{U_{E}}, \kappa)$ is a $\kappa$-soft $T_{1}$-space since for different soft points $x_{A},y_{A}$, $K_{A}=\{(e_{i}, \{y\}): e_{i}\in A\}$ and $L_{A}=\{(e_{i}, \{x\}): e_{i}\in A \}$ are soft remote neighborhoods of $x_{A},y_{A}$ respectively such that $x_{A}\in L_{A}$ and $y_{A}\in K_{A}.$  However $ K_{A}\tilde{\cup}  L_{A}\neq \tilde{U_{A}}$ and hence $(\tilde{U_{E}}, \kappa)$ is not a $\kappa$-soft $T_{2}$-space 
\end{example}
\begin{theorem} \label{cocontHaus}
Let $(\tilde{U_{E}},\kappa_{1}), (\tilde{V_{P}},\kappa_{2})$ be two cotopological spaces and 
$f=(\varphi,\psi):(\tilde{U_{E}},\kappa_{1})\rightarrow(\tilde{V_{P}},\kappa_{2})$ 
be an injective $\kappa$-continuous soft function. If  $(\tilde{V_{P}},\kappa_{2})$ is a $\kappa$-soft $T_{2}$-space 
then $(\tilde{U_{E}},\kappa_{1})$ is a $\kappa$-soft $T_{2}$-space. 
\end{theorem}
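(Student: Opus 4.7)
The plan is to mirror the argument used for the topological case in Theorem \ref{ContHaus}, but with careful attention to the fact that the $\kappa$-soft $T_{2}$ axiom is phrased in terms of \emph{strong} remote neighborhoods rather than ordinary ones, and that the separating property is $S_{A}\tilde{\cup}T_{A}=\tilde{U_{A}}$ instead of $S_{A}\tilde{\cap}T_{A}=\phi_{A}$. The overall idea is: take two distinct soft points in $\tilde{U_{E}}$, push them forward by the injective $f$ to obtain two distinct soft points in $\tilde{V_{P}}$, separate them there by soft strong remote neighborhoods using the $\kappa$-soft $T_{2}$ hypothesis, and then pull these neighborhoods back via $f^{-1}$.

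In more detail, first I would let $x_{A}\neq y_{A}$ be two distinct soft points of $\tilde{U_{E}}$. Because $f=(\varphi,\psi)$ is injective, both $\varphi$ and $\psi$ are injective, hence $f(x_{A})\neq f(y_{A})$ in $\tilde{V_{P}}$, with common parameter set $\psi(A)$. Applying the $\kappa$-soft $T_{2}$ assumption on $(\tilde{V_{P}},\kappa_{2})$ yields soft strong remote neighborhoods $S_{\psi(A)}$ of $f(x_{A})$ and $T_{\psi(A)}$ of $f(y_{A})$ with $S_{\psi(A)}\tilde{\cup}T_{\psi(A)}=\tilde{V_{\psi(A)}}$. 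I would then propose that $f^{-1}(S_{\psi(A)})$ and $f^{-1}(T_{\psi(A)})$ are soft strong remote neighborhoods of $x_{A}$ and $y_{A}$, respectively, and that their union is $\tilde{U_{A}}$.

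The main obstacle, and the step that needs a little genuine verification, is showing that preimages preserve the \emph{strong} variant of the remote neighborhood notion. For each $S_{\psi(A)}$ there is a closed soft set $K_{C}\in\kappa_{2}$ with $\varphi(x)\notin K_{C}(p)\supseteq S_{\psi(A)}(p)$ for every $p\in C$. By Theorem \ref{kappacont}, $f^{-1}(K_{C})$ is closed in $\kappa_{1}$, and since $f^{-1}(K_{C})(e)=\varphi^{-1}(K_{C}(\psi(e)))$, the pointwise conditions $x\notin f^{-1}(K_{C})(e)\supseteq f^{-1}(S_{\psi(A)})(e)$ transfer directly (using injectivity of $\varphi$ for the non-membership), which is the required strong remote neighborhood property. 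The analogous argument handles $T_{\psi(A)}$ and $y_{A}$.

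For the separating condition, I would invoke the preimage identity $f^{-1}(G\tilde{\cup}H)=f^{-1}(G)\tilde{\cup}f^{-1}(H)$ from the preliminaries, giving $f^{-1}(S_{\psi(A)})\tilde{\cup}f^{-1}(T_{\psi(A)})=f^{-1}(\tilde{V_{\psi(A)}})$. Evaluating pointwise: for each $e\in A$, $f^{-1}(\tilde{V_{\psi(A)}})(e)=\varphi^{-1}(V)=U$, using injectivity of $\psi$ to guarantee $\psi(e)\in\psi(A)$ is well-defined, so $f^{-1}(\tilde{V_{\psi(A)}})=\tilde{U_{A}}$. This completes the construction of separating strong remote neighborhoods for $x_{A}$ and $y_{A}$, establishing that $(\tilde{U_{E}},\kappa_{1})$ is a $\kappa$-soft $T_{2}$-space.
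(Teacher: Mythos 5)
Your proposal is correct and follows essentially the same route as the paper's proof: map the two distinct soft points forward, separate their images by soft strong remote neighborhoods using the $T_{2}$ hypothesis on $(\tilde{V_{P}},\kappa_{2})$, and pull these back via $f^{-1}$. You in fact supply more detail than the paper, which simply asserts that $\kappa$-continuity makes the preimages strong remote neighborhoods, whereas you verify this pointwise (note only that injectivity of $\varphi$ is not actually needed for the non-membership step, since $x\in\varphi^{-1}(K_{C}(p))$ iff $\varphi(x)\in K_{C}(p)$).
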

\begin{proof}
Let $x_{A}, y_{A}$ be two different soft points of $\tilde{U_{E}}.$ Then $f(x_{A})\neq f(y_{A}).$ 
Since $(\tilde{V_{P}},\kappa_{2})$ is a $\kappa$-soft $T_{2}$-space, 
there exist  soft strong remote neighborhoods $S_{\psi(A)},T_{\psi(A)}$ of $f(x_{A}),f(y_{A})$ respectively 
such that $S_{\psi(A)}\tilde{\cup} T_{\psi(A)}=\tilde{V_{\psi(A)}}.$ Since $f$ is $\kappa$-continuous 
$f^{-1}(S_{\psi(A)}),f^{-1}(T_{\psi(A)})$ are soft strong remote neighborhoods of $x_{A}, y_{A}$ respectively 
such that $f^{-1}(S_{\psi(A)})\tilde{\cup} f^{-1}(T_{\psi(A)})=\tilde{U_{A}}$. 
This shows that $(\tilde{U_{E}},\kappa_{1})$ is a $\kappa$-soft $T_{2}$-space.
\end{proof}

\begin{definition}
A soft cotopological space
$(\tilde{U_{E}},\kappa)$  is called $\kappa-$ soft regular if for every its soft point $x_{A}$ and every non-empty closed soft set
$K_{A}$ not containing $x_A$ there exist soft strong remote neighbourhoods $S_{A}, T_{A}$ of $x_{A}$ and $K_A$ respectively, such that  
$S_{A}\tilde{\cup} T_{A}=\tilde{U_{A}}.$
If $(\tilde{U_{E}},\kappa)$ is both $\kappa$-soft regular and $\kappa$-soft $T_{1}$-space then it is called $\kappa$- soft $T_{3}$-space.
\end{definition}
\begin{theorem}
If $(\tilde{U_{E}},\kappa)$ is a $\kappa$-soft regular space then for any soft point $x_{A}$ of $\tilde{U_{E}}$ and for any soft 
remote neighborhood $M_{A}$ of $x_{A}$ there exists $L_{A}\in \Re_{N}(x_{A})$ such that $M_{A}\tilde{\subseteq}L_{A}.$
\end{theorem}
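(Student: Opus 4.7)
The plan is to unpack the definition of the soft remote neighborhood $M_A$ and produce $L_A$ as the closed soft set witnessing membership of $M_A$ in $\Re_N(x_A)$. Since $M_A \in \Re_N(x_A)$, by the very definition of a soft remote neighborhood there is a closed soft set $K_A$ (after relabeling the parameter index to $A$) with $x_A \tilde{\notin} K_A$ and $M_A \tilde{\subseteq} K_A$. The lemma from the $\kappa$-separation subsection, saying that a soft set is closed iff it is a soft remote neighborhood of every soft point not belonging to it, immediately yields that $K_A$ itself is a soft remote neighborhood of $x_A$. Setting $L_A := K_A$, we obtain $L_A \in \Re_N(x_A)$ with $M_A \tilde{\subseteq} L_A$, which is exactly the desired conclusion.

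Strictly speaking, this argument does not even invoke $\kappa$-soft regularity; the hypothesis appears to be present because the author has in mind a stronger conclusion (for instance that $L_A$ can be chosen as a soft \emph{strong} remote neighborhood of $x_A$, not merely a remote neighborhood). Under that stronger reading, the argument continues as follows: apply the regularity axiom to the soft point $x_A$ and the non-empty closed soft set $K_A$ (the case $K_A = \phi_A$ forces $M_A = \phi_A$ and is trivial). This produces soft strong remote neighborhoods $S_A$ of $x_A$ and $T_A$ of $K_A$ with $S_A \tilde{\cup} T_A = \tilde{U_A}$. Since $T_A$ is a strong remote neighborhood of $K_A \tilde{\supseteq} M_A$, it comes with a closed witness $L_A$ satisfying $L_A(e) \supseteq T_A(e)$ and $K_A(e) \not\subseteq L_A(e)$ for every $e \in A$; the complementary relation $S_A(e) \cup T_A(e) = U$ combined with $x \notin$ (the closed witness for $S_A$) will force $x \notin L_A(e)$ for every $e$, while the containment of $M_A$ in $L_A$ follows from pushing $M_A \tilde{\subseteq} K_A$ through the separation $S_A \tilde{\cup} T_A = \tilde{U_A}$.

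The main potential obstacle, and the only place where care is actually required, is this last pointwise-versus-soft-point distinction: $x_A \tilde{\notin} K_A$ only guarantees \emph{some} parameter $e \in A$ with $x \notin K_A(e)$, whereas being a strong remote neighborhood requires $x \notin$ the relevant closed witness at \emph{every} parameter in $A$. This is exactly the gap that soft regularity is designed to bridge, so in the weak reading of the statement the hypothesis is essentially decorative, while in the strong reading regularity does the essential parameter-uniformization work. In either case, the core of the proof is the definitional unpacking of $M_A \in \Re_N(x_A)$ together with the lemma characterizing closed soft sets as universal remote neighborhoods.
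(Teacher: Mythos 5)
Your first, ``weak'' argument is correct and identifies a genuine feature of the statement as written: taking $L_{A}$ to be the closed witness $K_{A}$ (indeed, even $L_{A}=M_{A}$ itself) already gives a member of $\Re_{N}(x_{A})$ containing $M_{A}$, with no appeal to regularity. The paper, however, argues differently, and in the way you anticipated under your ``strong'' reading: it takes the closed witness $K_{A}$ with $x_{A}\tilde{\notin}K_{A}\tilde{\supseteq}M_{A}$, applies the regularity axiom to the pair $(x_{A},K_{A})$ to obtain soft strong remote neighborhoods $S_{1_{A}}$ of $x_{A}$ and $S_{2_{A}}$ of $K_{A}$ with $S_{1_{A}}\tilde{\cup}S_{2_{A}}=\tilde{U_{A}}$, and concludes $M_{A}\tilde{\subseteq}K_{A}\tilde{\subseteq}S_{1_{A}}\in\Re_{N}(x_{A})$; thus the $L_{A}$ actually produced is a \emph{strong} remote neighborhood of $x_{A}$, which is the only reading under which the regularity hypothesis does any work. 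Your sketch of that route is essentially the paper's argument, and you correctly isolate the delicate point, namely upgrading the existential ``some $e\in A$ with $x\notin K_{A}(e)$'' to the universal exclusion required of a strong remote neighborhood. It is worth adding that the paper's own step $K_{A}\tilde{\subseteq}S_{1_{A}}$ tacitly assumes $K_{A}(e)\cap S_{2_{A}}(e)=\emptyset$ for all $e$, whereas the definition of a strong remote neighborhood of $K_{A}$ only guarantees that $K_{A}(e)$ is not \emph{contained} in the closed witness above $S_{2_{A}}$; so the published proof is itself incomplete at essentially the spot you flagged, while your elementary argument settles the literal statement cleanly.
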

\begin{proof}
Let $M_{A}$ be a  soft remote neighborhood of $x_{A}$.Then there exists $K_{A} \in \kappa$ such that $x_{A}\tilde{\notin}K_{A}\tilde{\supseteq}M_{A}.$ 
Since $(\tilde{U_{E}},\kappa)$ is a $\kappa-$ soft regular space, there exist soft strong remote neighborhoods $S_{1_{A}}, S_{2_{A}}$ of 
$x_{A}$ and  $K_{A}$ respectively such that $S_{1_{A}}\tilde{\cup} S_{2_{A}}=\tilde{U_{A}}.$ 
Hence $M_{A}\tilde{\subseteq}K_{A}\tilde{\subseteq}S_{1_{A}}\in \Re_{N}(x_{A}).$ 
\end{proof}

\begin{theorem}
If $(\tilde{U_{E}},\kappa)$ is a $\kappa$-soft $T_{3}$-space then it is a $\kappa$-soft $T_{2}$-space.
\end{theorem}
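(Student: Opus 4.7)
The plan is to reduce the $T_2$ separation requirement for two distinct soft points to a point-versus-closed-set separation, using that in a $\kappa$-soft $T_1$-space every soft point is already a closed soft set.

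First I would fix two different soft points $x_{A}, y_{A}$ of $\tilde{U_{E}}$, i.e.\ $x\neq y$. By Theorem \ref{cotopT1}, the $\kappa$-soft $T_{1}$ assumption implies that the singleton soft set $y_{A}$ is a closed soft set; in particular $y_{A}\neq\phi_{A}$. Moreover, since $x\neq y$, we have $x\notin\{y\}=y_{A}(e)$ for every $e\in A$, so $x_{A}\tilde{\notin}y_{A}$.

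Next I would invoke $\kappa$-soft regularity of $(\tilde{U_{E}},\kappa)$ with the soft point $x_{A}$ and the non-empty closed soft set $y_{A}$. The regularity hypothesis directly produces soft strong remote neighborhoods $S_{A}$ of $x_{A}$ and $T_{A}$ of $y_{A}$ (the latter viewed as a soft set) with $S_{A}\tilde{\cup}T_{A}=\tilde{U_{A}}$.

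The one thing to check is that the notion of a soft strong remote neighborhood of the soft \emph{set} $y_{A}$ coincides with that of a soft strong remote neighborhood of the soft \emph{point} $y_{A}$. Unwinding the definitions, in both cases we require a closed soft set $K_{C}\tilde{\supseteq}T_{A}$ such that $y_{A}(e)=\{y\}$ is not a subset of $K_{C}(e)$, i.e.\ $y\notin K_{C}(e)$, for every relevant parameter $e$; these conditions are identical. Hence $S_{A}$ and $T_{A}$ are precisely the strong remote neighborhoods of the two soft points $x_{A}$ and $y_{A}$ required by the definition of a $\kappa$-soft $T_{2}$-space, and the proof is complete.

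There is essentially no obstacle here beyond this definitional check: the heavy lifting is already done by Theorem \ref{cotopT1}, which makes soft points closed in a $\kappa$-soft $T_{1}$-space, after which $T_{2}$ separation is a one-step application of regularity.
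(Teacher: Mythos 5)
Your proof is correct and follows essentially the same route as the paper: use the $\kappa$-soft $T_{1}$ property (via Theorem \ref{cotopT1}) to make $y_{A}$ a non-empty closed soft set not containing $x_{A}$, then apply $\kappa$-soft regularity to obtain the required strong remote neighborhoods with union $\tilde{U_{A}}$. Your additional check that a strong remote neighborhood of the soft set $y_{A}$ coincides with one of the soft point $y_{A}$ is a worthwhile detail the paper leaves implicit.
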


\begin{proof}
Let $x_{A}\neq y_{A}$ for some $x,y\in U, A\subseteq E.$ Since $(\tilde{U_{E}},\kappa)$ is a $\kappa$-soft $T_{1}$-space 
$y_{A}$ is a  closed soft set such that $x_{A}\tilde{\notin}y_{A}.$ On the other hand since 
$(\tilde{U_{E}},\kappa)$ is a $\kappa$-soft regular space there exist soft strong remote neighborhoods $S_{1_{A}}, S_{2_{A}}$ of $x_{A}, y_{A}$ 
respectively such that $S_{1_{A}}\tilde{\cup} S_{2_{A}}=\tilde{U_{A}}.$ Hence the proof is completed.
\end{proof}
\begin{definition}
 A soft cotopological space $(\tilde{U_{E}},\kappa)$ is called $\kappa$-soft normal, if for any two  closed soft 
sets $K_{A},L_{A}\tilde{\subseteq}{U_{E}}$  such that $K_{A}\tilde{\cap}L_{A}=\phi_{A}$  
 there exist soft strong remote neighborhoods $S_{1_{A}}, S_{2_{A}}$ of $K_{A},L_{A}$ 
respectively such that $S_{1_{A}}\tilde{\cup} S_{2_{A}}=\tilde{U_{A}}.$ \\
If $(\tilde{U_{E}},\kappa)$ is both $\kappa$-soft normal and $\kappa$-soft $T_{1}$-space then it is called $\kappa$-soft $T_{4}$-space.
\end{definition}

\section{Soft Ditopological Spaces}
Now we are ready to introduce the principal concept of this work - a soft ditopological space, which is actually a synthesis of the two structures studied in the 
previous sections - a soft topology, related to the property of openness in the space and a soft cotopology, relaying on the
property of closedness in the space:
\begin{definition}
The triple $(\tilde{U_{E}},\tau, \kappa)$ is said to be a soft ditopological space if $\tilde{U_{E}}$ is a soft set, $\tau$ is a topology 
on $\tilde{U_{E}}$ and 
$\kappa$ is a cotopology 
on $\tilde{U_{E}}.$ A pair $\delta = (\tau,\kappa)$ is called a ditopology on $\tilde{U_{E}}$ in this case.
\end{definition}
\begin{definition}
Given two ditopologies $\delta_1 = (\tau_1,\kappa_1)$ and  $\delta_2 = (\tau_{2}, \kappa_{2})$  on the same soft set 
$\tilde{U_{E}},$ $\delta_1$ is called  coarser than $\delta_2$ denoted by $\delta_1 \subseteq \delta_2$ if 
$\tau_{2}\subseteq \tau_{1}$ and $\kappa_{2}\subseteq \kappa_{1}.$
\end{definition}
\begin{definition}
Given a soft ditopological space $(\tilde{U_{E}},\delta),$ let 
$x_{A}\tilde{\in}\tilde{U_{E}}$.  
A pair $(F_{B}, M_{C})$, where  $F_{B}, M_{C}\tilde{\subseteq} \tilde{U_{E}}$, is called a soft neighborhood of $x_{A}$ if 
$F_{B}$ is a soft $\tau$-neighborhood and $M_{C}$ is a soft remote neighborhood of $x_{A}.$
Soft interior and soft closure of a soft set $F_{A}$ in a soft ditopological space $(\tilde{U_{E}},\delta)$ are defined respectively by:\\
${\rm int}F_{A}=\tilde{\bigcup_{i\in I}}\{G_{B_{i}}\tilde{\subseteq} \tilde{U_{E}}: G_{B_{i}}\in \tau$  and  
$G_{B_{i}}\tilde{\subseteq} F_{A} \},$\\
${\rm cl} F_{A}=\tilde{\cap}\{K_{A}\tilde{\subseteq} \tilde{U_{E}}: K_{A}\in \kappa \mbox{ and } K_{A}\tilde{\supseteq}F_{A}\}.$
\end{definition}


\subsection{Soft continuous functions}
\begin{definition}
Let $(\tilde{U_{E}},\delta_{1}),(\tilde{V_{P}},\delta_{2})$ be two soft ditopological spaces. 
A function $f=(\varphi, \psi):(\tilde{U_{E}},\delta_{1})\rightarrow(\tilde{V_{P}},\delta_{2})$
 where $\varphi:U\rightarrow V,$ $\psi:E\rightarrow P$ are mappings 
is  called continuous at a soft point $x_{A}\tilde{\in} \tilde{U_{E}} $ if 
$f:(\tilde{U_{E}},\tau_{1})\rightarrow(\tilde{V_{P}},\tau_{2})$ is $\tau$-continuous and 
$f:(\tilde{U_{E}},\kappa_{1})\rightarrow(\tilde{V_{P}},\kappa_{2})$ is $\kappa$-continuous.
\end{definition}
\begin{theorem}
The followings are equivalent for a function $f: (\tilde{U_{E}},\delta_{1})\rightarrow(\tilde{V_{P}},\delta_{2}):$ 
\begin{enumerate}
\item $f$  is continuous at $x_{A},$
\item For any soft neighborhood $(F_{\psi(A)},M_{\psi(A)}')$ of $f(x_{A})$, the pair \\
$(f^{-1}(F_{\psi(A)}),f^{-1}(M_{\psi(A)}'))$ 
is a soft neighborhood of $x_{A}.$
\end{enumerate}
\end{theorem}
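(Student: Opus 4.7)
The plan is to reduce this biconditional to the two already-established local characterisations of $\tau$-continuity (Theorem \ref{topcontloc}) and $\kappa$-continuity (Theorem \ref{cont-remote}), exploiting the fact that a soft neighborhood in the ditopological sense is, by definition, just a pair consisting of one object of each type. Because the definition of continuity at a soft point in a ditopological space is the conjunction of $\tau$-continuity at that point and $\kappa$-continuity at that point, the proof should split cleanly along these two independent components with no interaction between them.

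For the direction (1)$\Rightarrow$(2), I would begin with an arbitrary soft neighborhood $(F_{\psi(A)},M_{\psi(A)}')$ of $f(x_A)$. By the definition of a soft neighborhood in the ditopological setting, $F_{\psi(A)}$ is a $\tau$-neighborhood of $f(x_A)$ and $M_{\psi(A)}'$ is a soft remote neighborhood of $f(x_A)$. Applying Theorem \ref{topcontloc}(3) to the $\tau$-continuous component of $f$ yields that $f^{-1}(F_{\psi(A)})$ is a $\tau$-neighborhood of $x_A$, and applying Theorem \ref{cont-remote} to the $\kappa$-continuous component yields that $f^{-1}(M_{\psi(A)}')$ is a soft remote neighborhood of $x_A$. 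Reassembling these gives that the pair $(f^{-1}(F_{\psi(A)}),f^{-1}(M_{\psi(A)}'))$ is a soft neighborhood of $x_A$, as required.

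For the converse (2)$\Rightarrow$(1), I would isolate each component. To verify $\tau$-continuity at $x_A$, take an arbitrary $\tau$-neighborhood $F_{\psi(A)}$ of $f(x_A)$ and pair it with a guaranteed-to-exist soft remote neighborhood of $f(x_A)$ (for instance $\phi_{E}$, which is a remote neighborhood of every soft point, so the pair $(F_{\psi(A)},\phi_E)$ is a soft neighborhood). Condition (2) then gives that $f^{-1}(F_{\psi(A)})$ is a $\tau$-neighborhood of $x_A$, and Theorem \ref{topcontloc} concludes $\tau$-continuity at $x_A$. Symmetrically, to verify $\kappa$-continuity at $x_A$, pair an arbitrary soft remote neighborhood $M_{\psi(A)}'$ of $f(x_A)$ with the trivial $\tau$-neighborhood $\tilde{V_P}$; condition (2) then gives that $f^{-1}(M_{\psi(A)}')$ is a soft remote neighborhood of $x_A$, and Theorem \ref{cont-remote} yields $\kappa$-continuity at $x_A$.

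Because both ingredients are settled by invoking earlier results, there is no genuine obstacle here; the only mildly delicate point is choosing the trivial partner in each half of the converse so as to produce a legitimate ditopological soft neighborhood while isolating only the relevant component. Everything else is a straightforward bookkeeping exercise assembling the two independent equivalences into a single one.
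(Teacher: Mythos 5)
Your proposal is correct and follows exactly the route the paper takes: its proof of this theorem is the one-line observation that the statement follows from Theorem~\ref{topcontloc} and Theorem~\ref{cont-remote}, which is precisely the decomposition you carry out. Your write-up merely makes explicit the bookkeeping (in particular the choice of the trivial partners $\phi_E$ and $\tilde{V_P}$ in the converse direction) that the paper leaves to the reader.
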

\begin{proof}
The proof follows from Theorem \ref{topcontloc} and Theorem \ref{cont-remote}.
\end{proof}
\begin{theorem}
A soft function $f =((\varphi_{1}, \psi_{1})):(\tilde{U_{E}},\delta_{1})\rightarrow(\tilde{V_{P}},\delta_{2})$ 
 is soft continuous if and only if the preimage of any soft set from $\tau_{2}$ 
 is in $\tau_{1}$ and the preimage of any soft  set from $\kappa_{2}'$  is  in $\kappa_{1}.$
\end{theorem}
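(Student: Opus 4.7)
The plan is to reduce the statement to the two characterization theorems already established separately for the soft topology and the soft cotopology components, namely Theorem \ref{topcont} and Theorem \ref{kappacont}. By the definition of continuity for a soft function between soft ditopological spaces, $f=(\varphi_1,\psi_1):(\tilde{U_E},\delta_1)\to(\tilde{V_P},\delta_2)$ is continuous precisely when $f:(\tilde{U_E},\tau_1)\to(\tilde{V_P},\tau_2)$ is $\tau$-continuous and $f:(\tilde{U_E},\kappa_1)\to(\tilde{V_P},\kappa_2)$ is $\kappa$-continuous. So the whole task splits into two independent biconditionals that can be handled in parallel.

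For the forward direction, I would assume $f$ is soft continuous, which by definition gives both $\tau$-continuity and $\kappa$-continuity. Applying Theorem \ref{topcont} to the first yields that the preimage of every open soft set from $\tau_2$ lies in $\tau_1$, and applying Theorem \ref{kappacont} to the second yields that the preimage of every closed soft set from $\kappa_2'$ lies in $\kappa_1$. Combining these two implications gives the desired conclusion.

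For the reverse direction, I would assume the two preimage conditions hold. The hypothesis that the preimage of any soft set from $\tau_2$ is in $\tau_1$ is the exact condition characterizing $\tau$-continuity in Theorem \ref{topcont}, so $f$ is $\tau$-continuous as a map $(\tilde{U_E},\tau_1)\to(\tilde{V_P},\tau_2)$. Similarly, the hypothesis that the preimage of any soft set from $\kappa_2'$ is in $\kappa_1$ is precisely the condition characterizing $\kappa$-continuity in Theorem \ref{kappacont}, so $f$ is $\kappa$-continuous as a map $(\tilde{U_E},\kappa_1)\to(\tilde{V_P},\kappa_2)$. By the definition of soft continuity in a ditopological setting, this means $f:(\tilde{U_E},\delta_1)\to(\tilde{V_P},\delta_2)$ is soft continuous.

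No real obstacle is expected here; the theorem is essentially a bookkeeping corollary, assembling two previously proved equivalences under the ditopological definition. The only care needed is to keep the two layers rigorously separate: the $\tau$-continuity argument must use only soft open sets and $\tau$-neighborhoods, while the $\kappa$-continuity argument must stay within the closed-set / remote-neighborhood framework (in particular using $\kappa_2'$ rather than $\kappa_2$, as Theorem \ref{kappacont} is formulated). Since complementation is avoided throughout, the two parts cannot be collapsed into one and must be proved independently, but each proof is a direct citation of the corresponding earlier theorem.
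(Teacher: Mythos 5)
Your proposal is correct and follows exactly the same route as the paper, which simply cites Theorem~\ref{topcont} and Theorem~\ref{kappacont} after unfolding the definition of soft continuity into its $\tau$- and $\kappa$-components; you have merely written out the bookkeeping the paper leaves implicit, including the correct attention to $\kappa_2'$ versus $\kappa_2$.
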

\begin{proof}
The proof follows from  Theorem \ref{topcont}. and Theorem \ref{kappacont}.
\end{proof}

\subsection{Soft separation axioms}
We introduce separation axioms for a soft ditopological space $(\tilde{U_{E}},\tau,\kappa)$ by requesting corresponding separation properties 
for its topology $\tau$ and 
cotopology $\kappa$: 
\begin{definition}
A soft ditopological space $(\tilde{U_{E}},\delta)$ is called a soft $T_{0}$-space 
(soft $T_{1}$-space, soft $T_{2}$-space, soft regular space, soft $T_{3}$-space, soft normal space, soft $T_{4}$-space) 
if $(\tilde{U_{E}},\tau)$ is a $\tau$-soft $T_{0}$-space 
(respectively a $\tau$-soft $T_{1}$-space, $\tau$-soft $T_{2}$-space, $\tau$-soft regular space, 
$\tau$-soft $T_{3}$-space, $\tau$-soft normal space, $\tau$-soft $T_{4}$-space) 
and $(\tilde{U_{E}},\kappa)$ is a $\kappa$-soft $T_{0}$-space
(respectively a $\kappa$-soft $T_{1}$-space, $\kappa$-soft $T_{2}$-space, $\kappa$-soft regular space, 
$\kappa$-soft $T_{3}$-space, $\kappa$-soft normal space, $\kappa$-soft $T_{4}$-space) 

\end{definition}
From theorems \ref{topT1} and \ref{cotopT1} it follows
\begin{theorem}
 If for any soft point $x_{A}(x\in U, A\subseteq E)$ of a soft ditopological space $(\tilde{U_{E}},\delta)$, $x_{A}^{c}$ 
is an open and $x_{A}$ is a  closed soft set then $(\tilde{U_{E}},\delta)$ is a $\tau$-soft $T_{1}$-space.
\end{theorem}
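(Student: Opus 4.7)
The plan is to recognize the statement as a direct repackaging of two previously established results, one for each component of the ditopology, glued together by the ditopological definition of a soft $T_1$-space. The hypothesis cleanly splits: the openness assumption on $x_A^c$ feeds the topology side, while the closedness assumption on $x_A$ feeds the cotopology side.

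First I would invoke Theorem \ref{topT1}: assuming $x_A^c \in \tau$ for every soft point $x_A$, that theorem already delivers that $(\tilde U_E,\tau)$ is a $\tau$-soft $T_1$-space. Second, I would invoke Theorem \ref{cotopT1}: since every soft point $x_A$ is assumed to be a closed soft set, the characterization given there yields that $(\tilde U_E,\kappa)$ is a $\kappa$-soft $T_1$-space. Finally, by the definition of a soft $T_1$ ditopological space — which requires exactly that both the $\tau$-part and the $\kappa$-part be soft $T_1$ — the triple $(\tilde U_E,\delta) = (\tilde U_E,\tau,\kappa)$ is a soft $T_1$-space, completing the proof.

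There is essentially no mathematical obstacle here; the work was already done in sections 3 and 4, and this statement merely bundles the two halves under the ditopological umbrella. The only subtlety worth flagging is cosmetic: as printed, the conclusion reads \emph{$\tau$-soft $T_1$-space}, whereas the hypotheses involve both openness of $x_A^c$ and closedness of $x_A$, so only one of the two hypotheses would actually be needed for a purely $\tau$-sided conclusion. The natural reading, consistent with the preceding definition of separation axioms for ditopological spaces, is that the intended conclusion is that $(\tilde U_E,\delta)$ is a soft $T_1$-space in the ditopological sense; I would state it that way and then give the two-line argument above.
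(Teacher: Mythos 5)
Your proposal is correct and follows exactly the route the paper intends: the paper offers no written argument beyond the sentence ``From theorems \ref{topT1} and \ref{cotopT1} it follows,'' which is precisely your two-step combination of the $\tau$-side and $\kappa$-side results under the ditopological definition of soft $T_{1}$. Your remark that the stated conclusion should read ``soft $T_{1}$-space'' (in the ditopological sense) rather than ``$\tau$-soft $T_{1}$-space'' correctly identifies a typo in the theorem as printed.
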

From the definitions it easily follows that 
every soft $T_{1}$-ditopological space is a soft $T_{0}$-ditopological space. However as shown by the next example the converse 
generally is not true:

\begin{example}
Let $U$ be the real numbers, $E$ be the set of 
natural numbers, $A\subseteq E$ and $F_{E_{\lambda}}=\{(e,]-\infty, e+\lambda[): e\in E_{\lambda}\}$ 
and $\tau=\{(F_{E})_{\lambda}\tilde{\subseteq}\tilde{U_{E}}\}\cup \{\phi_{A}, \tilde{U_{E}}\},$
$K_{E_{\lambda}}=\{(e,[e+\lambda,\infty[): e\in E_{\lambda}, \lambda\in \mathbb{N}\}$ 
and $\kappa=\{(K_{E})_{\lambda}\tilde{\subseteq}\tilde{U_{E}}\}\cup \{\phi_{A}, \tilde{U_{E}}\}.$ 
Then $(\tilde{U_{E}},\tau, \kappa)$ is a soft $T_{0}$-ditopological space but it is not soft $T_{1}$-ditopological space.
\end{example}
\begin{remark}
Every soft $T_{2}$-ditopological space is a soft $T_{1}$-ditopological space. 
\end{remark}

\begin{theorem}
Let $(\tilde{U_{E}}, \tau_{1}, \kappa_{1}), (\tilde{V_{P}},\tau_{2}, \kappa_{2})$ be 
 ditopological spaces and $f$ be an 
injection soft function. If $f$ is soft continuous and $(\tilde{V_{P}},\tau_{2},\kappa_{2})$ is a soft $T_{2}$-space 
then $(\tilde{U_{E}},\tau_{1},\kappa_{1})$ is a soft $T_{2}$-space. 
\end{theorem}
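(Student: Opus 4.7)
The plan is to decompose the statement along the two coordinates of the ditopology and then apply the analogous results already proved for soft topological and soft cotopological spaces separately. By the definition of a soft $T_2$-ditopological space, the hypothesis that $(\tilde{V_{P}},\tau_{2},\kappa_{2})$ is a soft $T_{2}$-space unpacks into the two conditions that $(\tilde{V_{P}},\tau_{2})$ is a $\tau$-soft $T_{2}$-space and $(\tilde{V_{P}},\kappa_{2})$ is a $\kappa$-soft $T_{2}$-space. Similarly, by definition of soft continuity for a ditopological map, $f=(\varphi,\psi)$ being soft continuous unpacks into the two conditions that $f:(\tilde{U_{E}},\tau_{1})\rightarrow(\tilde{V_{P}},\tau_{2})$ is $\tau$-continuous and $f:(\tilde{U_{E}},\kappa_{1})\rightarrow(\tilde{V_{P}},\kappa_{2})$ is $\kappa$-continuous. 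Injectivity of $f$ is independent of either structure and transfers to both viewpoints.

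Next, I would handle the two components in parallel. For the topological component, apply Theorem \ref{ContHaus} to the injective $\tau$-continuous map $f:(\tilde{U_{E}},\tau_{1})\rightarrow(\tilde{V_{P}},\tau_{2})$ with $\tau$-soft $T_{2}$ codomain; the conclusion is that $(\tilde{U_{E}},\tau_{1})$ is a $\tau$-soft $T_{2}$-space. For the cotopological component, apply Theorem \ref{cocontHaus} to the injective $\kappa$-continuous map $f:(\tilde{U_{E}},\kappa_{1})\rightarrow(\tilde{V_{P}},\kappa_{2})$ with $\kappa$-soft $T_{2}$ codomain; the conclusion is that $(\tilde{U_{E}},\kappa_{1})$ is a $\kappa$-soft $T_{2}$-space.

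Combining the two conclusions via the definition of a soft $T_2$-ditopological space yields that $(\tilde{U_{E}},\tau_{1},\kappa_{1})$ is a soft $T_2$-space, as required. There is no real obstacle here: since both the hypothesis and the conclusion have been engineered to be componentwise (topology and cotopology are declared to be independent structures, and continuity as well as the $T_2$ property for ditopologies are defined as the conjunction of the corresponding properties in each component), the argument is purely a bookkeeping exercise that invokes the two earlier theorems. The only thing to be careful about is to verify that the injectivity condition used in each of Theorem \ref{ContHaus} and Theorem \ref{cocontHaus} is exactly the condition that both $\varphi$ and $\psi$ are injective, which is precisely how injectivity of the soft mapping $f=(\varphi,\psi)$ was defined in Definition \ref{SoftF}.
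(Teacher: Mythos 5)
Your proposal is correct and follows exactly the route of the paper, which likewise deduces the result directly from Theorem \ref{ContHaus} and Theorem \ref{cocontHaus} via the componentwise definitions of soft continuity and the soft $T_{2}$ property for ditopologies. You have merely written out the bookkeeping that the paper leaves implicit.
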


\begin{proof}
The proof is obvious by Theorem \ref{ContHaus}. and Theorem \ref{cocontHaus}.
\end{proof}


\section{Conclusion}
In this paper we have introduced the concept of a soft ditopological space as a 'soft version" of the concept of a ditopological space in the
sense of L.M. Brown \cite{Brown3} on one hand and as a synthesis of the concepts of a soft topology and a soft cotopology, the last one also introduced
here. As the main prospectives for the future work in this field we consider the following:
\begin{enumerate}
\item To develop categorical foundations for soft ditopological spaces. In particular to describe products, coproducts, quotient spaces, etc. 
To describe properties of the category of soft topological spaces as a subcategory of the category of soft ditopological spaces.
\item To introduce the concept of an $L$-fuzzy soft ditopological space where $L$ is a fixed cl-monoid \cite{Birk} and to develop the corresponding theory. 
\item To define the graded versions of the concepts of a soft ditopological space and an $L$-fuzzy soft ditopological space (on the lines of
the papers \cite{BrownSo, So85, So89, So96}) and to develop the corresponding theory.
\item To study possible applications of soft ditopological spaces in real-world problems.
\end{enumerate}

\section{Acknowledgement}
This paper is a part of the Phd thesis of first author (TD) and was supported by "Higher Education Council" of Turkey. The first author (TD) thanks to Department of Mathematics of Latvia University and Prof. A. Sostak for their kind hospitality during her stay in Riga. 

\section{\protect\bigskip References}

\end{document}